\theoremstyle{plain}
\newtheorem{thm}{\bf Theorem}[section]
\newtheorem{coro}[thm]{\bf Corollary}
\newtheorem{lem}[thm]{\bf Lemma}
\newtheorem{prop}[thm]{\bf Proposition}
\newtheorem{defn}[thm]{\bf Definition}
\theoremstyle{remark}
\newtheorem{rmk}[thm]{\bf Remark}
\newtheorem{prob}{\bf Problem}[section]
\makeatletter \@addtoreset{equation}{section} \makeatother \makeatletter
\def\esssup{\mathop{\rm esssup}}
            \def\h{\widehat}          \def\wt{\widetilde}
             \def\cd{\cdot}            \def\cds{\cdots}
\def\({\Big (}              \def\){\Big )}
\def\[{\Big[}               \def\]{\Big]}
\def\Ra{\mathop{\Rightarrow}}                     
\def\lan{\mathop{\langle}}                        \def\ran{\mathop{\rangle}}
\def\llan{\left\langle}                           \def\rran{\right\rangle}
\def\Blan{\Big\langle}                            \def\Bran{\Big\rangle}
        \def\dbE{\mathbb{E}}
\def\dbF{\mathbb{F}}    \def\dbH{\mathbb{H}}    
\def\dbP{\mathbb{P}}    \def\dbR{\mathbb{R}}  \def\dbS{\mathbb{S}}
\def\a{\alpha}    \def\g{\gamma}   \def\d{\delta}  \def\eps{\epsilon}  
\def\z{\zeta}     \def\l{\lambda}      \def\n{\nu}         \def\si{\sigma}
    \def\f{\varphi}            
\def\i{\infty}
    \def\G{\Gamma}   \def\D{\Delta}    \def\Th{\Theta}         \def\L{\Lambda}
    \def\F{\Phi}     \def\O{\Omega}
      \def\cD{{\cal D}}  
\def\cF{{\cal F}}        
    \def\cR{{\cal R}}  \def\cS{{\cal S}}  
\def\cU{{\cal U}}        
  \def\cl{{\cal l}}
      \def\ae{\hbox{\rm a.e.{ }}}             \def\as{\hbox{\rm a.s.{ }}}
      \def\tr{\hbox{\rm tr$\,$}}
\def\cl{\overline}       \def\deq{\triangleq}                   \def\les{\leqslant}                \def\ges{\geqslant}
     \def\={\buildrel \triangle \over =}    
\def\ds{\displaystyle}                                \def\ns{\noalign{\ss}}
\def\no{\noindent}                             \def\hb{\hbox}
\def\ss{\smallskip}      \def\ms{\medskip}                      
\def\5n{\negthinspace \negthinspace \negthinspace \negthinspace \negthinspace }
\def\4n{\negthinspace \negthinspace \negthinspace \negthinspace }
\def\3n{\negthinspace \negthinspace \negthinspace }
\def\2n{\negthinspace \negthinspace }
\def\1n{\negthinspace }
\def\q{\quad}            \def\qq{\qquad}
\font\tenbb=msbm10 \font\sevenbb=msbm7 \font\fivebb=msbm5
\def\bde{\begin{definition}}
  \def\ede{\end{definition}}
\def\be{\begin{equation}}
  \def\bel{\begin{equation}\label}
    \def\ee{\end{equation}}
  \def\bt{\begin{theorem}}
    \def\et{\end{theorem}}
  \def\bc{\begin{corollary}}
    \def\ec{\end{corollary}}
  \def\bl{\begin{lemma}}
    \def\el{\end{lemma}}
  \def\bp{\begin{proposition}}
    \def\ep{\end{proposition}}
  \def\bas{\begin{assumption}}
    \def\eas{\end{assumption}}
  \def\br{\begin{remark}}
    \def\er{\end{remark}}
  \def\ba{\begin{array}}
            \def\ea{\end{array}}
          \def\ed{\end{document}}
\begin{document}

\title{\bf Open-Loop and Closed-Loop Solvabilities for Stochastic Linear Quadratic Optimal Control
  Problems of Markov Regime-Switching System\thanks{This work is supported by the National Natural Science Foundation of China (grant nos. 11771079, 11371020), and RGC Grants 15209614 and 15255416.}}
\author{Xin Zhang\thanks{School of Mathematics, Southeast University, Nanjing, Jiangsu Province, 211189, China (x.zhang.seu@gmail.com).} \  and\  Xun Li\thanks{Department of
    Applied Mathematics, The Hong Kong Polytechnic University, Hong
    Kong, China (malixun@polyu.edu.hk).}
 }
\date{}
\maketitle

\noindent {\bf Abstract:}
This paper investigates the stochastic linear quadratic (LQ, for short) optimal control problem of Markov regime switching system. The representation of the cost functional for the stochastic LQ optimal control problem of Markov regime switching system is derived using the technique of It{\^o}'s formula. For the stochastic LQ optimal control problem of Markov regime switching system, we establish the equivalence between  the open-loop (closed-loop) solvability and the existence of an adapted solution to the corresponding forward-backward stochastic differential equation with constraint (the existence of a regular solution to the Riccati equation). Also, we analyze the interrelationship between the strongly regular solvability of the Riccati equation and the uniform convexity of the cost functional.    % the open-loop (closed-loop) solvability and the corresponding forward-backward differential equation system (the existence of a regular solution to the Riccati equation) for the stochastic LQ optimal control problem of Markov regime switching

\ms

\noindent {\bf Keywords:} linear quadratic optimal control, Markov regime switching, Riccati equation, open-loop solvability, closed-loop solvability.

\ms

\no\bf AMS Mathematics Subject Classification. \rm 49N10, 49N35, 93E20.

\listoftodos

\section{Introduction}
Linear-quadratic (LQ) optimal control problem plays important role in control theory. It is a classical and fundamental problem in the fields of control theory. In the past few decades, both the deterministic and stochastic linear quadratic (LQ) control problems are widely studied.
%The deterministic LQ optimal control problem can be trace back to the works of \citet{bellman1958samtcp} and \citet{kalman1960cto}. Extension to the
Stochastic LQ optimal control problem was first carried out by Kushner \cite{Kushner1962} with dynamic programming method. Later, Wonham \cite{Wonham1968}  studied the generalized version of the matrix Riccati equation arose in the problems of stochastic control and filtering. Using functional analysis techniques, Bismut \cite{Bismut1976} proved the existence of the Riccati equation and derived the existence of the optimal control in a random feedback form for stochastic LQ optimal control with random coefficients. Tang \cite{Tang2003} studied the existence and uniqueness of the associated stochastic Riccati equation for a general stochastic LQ optimal control problems with random coefficients and state control dependent noise via the method of stochastic flow, which solves Bismut and Peng's long-standing open problems. Moreover, Tang provided a rigorous derivation of the interrelationship between the Riccati equation and the stochastic Hamilton system as two different but equivalent tools for the stochastic LQ problem. For more details on the progress of stochastic Riccati equation, interest readers may refer to \cite{Kohlmann2003mbsr,Kohlmann2003mrlq,Kohlmann2002,Kohlmann2001ndbsre,Tang2015}.

Under some mild conditions on the weighting coefficients in the cost functional, such as positive definite of the quadratic weighting control martix, and so on,   the stochastic LQ optimal control problems can be solved elegantly via the Riccati equation approach, see \cite[Chapter 6]{yong1999sch}. Chen et al. \cite{Chen1998} was the first to start the pioneer work of stochastic LQ optimal control problems with indefinite of the quadratic weighting control matrix, which turns out to be useful in solving the continuous time mean-variance portfolio selection problems.  Since then, there has been an increasing interest in the so-called indefinite stochastic LQ optimal control, see, for example, Chen and Yong \cite{Chen2001}, Li and Zhou \cite{lizhou2002islq}, Li et al. \cite{xunli2001islqj,xunli2003islq}, and so on.
  %\citet{Lim1999}, \citet{Rami2000},  \citet{Chen2000}, \citet{Chen2001},  \citet{RamiMooreZhou2002islq}, \citet{xunli2003islq,xunli2001islqj, lizhou2002islq}, and so on.

Another extension to stochastic LQ optimal control problems is to involve random jumps in the state systems, such as Poisson jumps or the regime switching jumps. Wu and Wang \cite{wu2003fbsde} was the first to consider the stochastic LQ optimal control problems with Poisson jumps and obtain the existence and uniqueness of the deterministic Riccati equation. Using the technique of completing squares, Hu and Oksendal \cite{Hu2008} discussed  the stochastic LQ optimal control problem with Poisson jumps and partial information.
%\citet{Meng2014glqo} studied the multiple dimensional stochatic LQ optimal control problem with Poisson jumps and random coefficients.
Existence and uniqueness of the stochastic Riccati equation with jumps and connections between the stochastic Riccati equation with jumps  and the associated Hamilton systems of stochastic LQ optimal control problem were also presented. Yu \cite{Yu2017ihjd} investigated a kind of infinite horizon backward stochastic LQ optimal control problems and differential game problems under the jump-diffusion model state system. Li et al. \cite{Li2018} solved the indefinite  stochastic LQ optimal control problem with Poisson jumps.

The stochastic control problems involving regime switching jumps are of interest and of practical importance in various fields such as science, engineering, financial management and economics.  The regime-switching models and related topics have been extensively studied in the areas of applied probability and stochastic controls. More recently, there has been dramatically increasing interest in studying this family of stochastic control problems as well as their financial applications, see, for examples, \cite{Zhou2003mmvp,xunli2001islqj,Yin2004mmvps,lizhou2002islq,xunli2003islq,Zhang2018gsmp,Zhang2011mrsm,Zhang2012smp,Zhang2010psem,mei2017equilibrium}. Ji and Chizeck \cite{Ji1992jlqgc,Ji1990csctm} formulated a class of continuous-time LQ optimal controls with Markovian jumps. Zhang and Yin \cite{QingZhang1999noch} developed hybrid controls of a class of LQ systems modulated by a finite-state Markov chain. Li and Zhou \cite{lizhou2002islq}, Li et al. \cite{xunli2001islqj,xunli2003islq} introduced indefinite stochastic LQ optimal controls with regime switching jumps. Liu et al. \cite{Liu2005nocrs} considered near-optimal
controls of regime-switching LQ problems with indefinite control weight costs.

Recently, Sun and Yong \cite{sun2014linear} investigated the two-person zero-sum stochastic LQ differential games. It was shown in \cite{sun2014linear} that the open-loop solvability is equivalence to the existence of an adapted solution to an forward-backward stochastic differential equation (FBSDE, for short)  with constraint and closed loop solvability is equivalent to the existence of a regular solution to the Riccati equation. As a continuation work of \cite{sun2014linear}, Sun et al. \cite{Sun2016olcls} studied the open-loop and closed-loop solvabilities for stochastic LQ optimal control problems. Moreover, the equivalence between the strongly regular solvability of the Riccati equation and the uniform convexity of the cost functional is established. The aim of this paper is to extend the results of Sun et al. \cite{Sun2016olcls} to the case of stochastic LQ optimal control problems with regime switching jumps. We will establish the above equivalences of Sun et al. \cite{Sun2016olcls} for  % between the open-loop (closed-loop) solvability and the corresponding forward-backward differential equation system (the existence of a regular solution to the Riccati equation) for
the stochastic LQ optimal control problem with regime switching jumps.

The first main contribution of our paper is to provide a method for obtaining the representation of the cost functional for the stochastic LQ optimal control problem with regime switching jumps.  In Sun et al. \cite{Sun2016olcls}, the representation of the cost functional, which is the summary results of Yong and Zhou \cite{yong1999sch}, is fundamental to prove the above equivalences. Unlike the techniques of function analysis used in Yong and Zhou \cite{yong1999sch} or Sun et al. \cite{Sun2016olcls}, our method for deriving the representation of the cost functional is mainly based on the technique of It{\^o}'s formula only. The second main contribution of our paper is to use the stochastic flow theory for proving the equivalence between the closed-loop solvability and the existence of regular solution to the Riccati equation. Due to the incorporate of the regime switching jumps, the method used in Sun et al. \cite{Sun2016olcls} for proving the equivalence between the closed-loop solvability and the existence of regular solution to the Riccati equation does not work for the stochastic LQ optimal control problem with regime switching jumps.
%Inspired by \citet{Tang2003}, we filled this gap by using the technique of stochastic flow, which can also be used for proving the equivalence of \citet{Sun2016olcls}.

The rest of the paper is organized as follows. Section 2 will introduce some useful notations and collect some preliminary results and state the stochastic LQ optimal control problem with regime switching jumps. Section 3 is devoted to deriving the representation of the cost functional by using the technique of It\^o formula. In section 4 and 5, we will prove the equivalence between the open-loop (closed-loop) solvability and the existence of an adapted solution to the corresponding FBSDE with constraint (the existence of a regular solution to the Riccati equation) for the stochastic LQ optimal control problem of Markov regime switching system.  The equivalence between the strongly regular solvability of the Riccati equation and the uniform convexity of the cost functional is established in section 6.

\section{Preliminaries and Model Formulation}

Let $(\O,\cF,\dbF,\dbP)$ be a complete filtered probability space on
which a standard one-dimensional Brownian motion $W=\{W(t); 0\les t
< \i \}$ and a continuous time, finite-state, Markov chain $\a=\{\a(t); 0\les t< \i \}$ are defined, where $\dbF=\{\cF_t\}_{t\ges0}$ is the natural
filtration of $W$ and $\a$ augmented by all the $\dbP$-null sets in $\cF$.  %\citet{Karatzas-Shreve 1991,yong1999sch}.
In the rest of our paper, we will use the following notation.
\begin{eqnarray*}
  \begin{array}{ll}
    \mathbb{N}: & \mbox{the set of natural numbers};\\
    \dbR_+, \cl{\dbR}_+:     &\mbox{the sets } [0,\infty) \mbox{ and } [0,+\infty] \mbox{ respectively};\\
    \mathbb{R}^n: & \mbox{the } n\mbox{-dimensional Euclidean space};\\
    M^\top: & \mbox{the transpose of any vector or matrix } M;\\
    \tr[M]: & \mbox{the trace of a square matrix } M;\\
    \cR(M): & \mbox{the range of the matrix } M;\\
    \langle \cd\,,\cd\rangle:& \mbox{the inner products in possibly different Hilbert spaces};\\
    M^\dag:    & \mbox{the Moore-Penrose pseudo-inverse of the matrix } M  ({\rm see, \cite{penrose1955generalized}});\\
    \mathbb{R}^{n\times m}: & \mbox{the space of all } n\times m \mbox{ matrices endowed with the inner product }\\
                & \langle M, N\rangle \mapsto\tr[M^\top N] \mbox{ and the norm } |M|=\sqrt{\tr[M^\top M]};\\
    \dbS^n: & \mbox{the set of all }n\times n \mbox{ symmetric matrices};\\
    \cl{\dbS^n_+}: &\mbox{the set of all }n\times n \mbox{ positive semi-definite matrices};\\
    \dbS^n_+:  &\mbox{the set of all }n\times n \mbox{ positive-definite matrices}.\\
    % \mbox{Diag}(y): & \mbox{the diagonal matrix with the elements of } y \mbox{ on the diagonal } \\
    %            & \mbox{and } 0s \mbox{ everywhere else};\\%the diagonal matrix with the elements of y on the diagonal. and 0s everywhere else
   % \left < x, y \right >: & \mbox{the inner product of } x,y\in \mathbb{R}^L, \mbox{ that is }  \left < x, y \right >:=x^Ty;\\
    % C^{k,l}([0,T]\times \mathbb{R}^L), k,l\in \mathbb{N}: &  \mbox{the set of the functions } f(t,x) \mbox{  whose partial derivatives }\\
    %             & \mbox{of orders } \leq k \mbox{ in the first variable } t \mbox{ and of orders } \leq l \mbox{ in the }\\
    %             & \mbox{second variable } x  \mbox{ are continuous on } [0,T]\times \mathbb{R}^L.
  \end{array}
\end{eqnarray*}
% We recall that $\dbR^n$ is the $n$-dimensional Euclidean space,
% $\dbR^{n\times m}$ is the space of all $(n\times m)$ matrices,
% endowed with the inner product $\langle M,N\rangle\mapsto\tr[M^\top
% N]$ and the norm $|M|=\sqrt{\tr[M^\top M]}$, $\dbS^n\subseteq
% \dbR^{n\times n}$ is the set of all $(n\times n)$ symmetric
% matrices, $\cl{\dbS^n_+}\subseteq\dbS^n$ is the set of all $(n\times
% n)$  positive semi-definite matrices, and
% $\dbS^n_+\subseteq\cl{\dbS^n_+}$ is the set of all $(n\times n)$
% positive-definite matrices. When there is no confusion, we shall use
% $\langle \cd\,,\cd\rangle$ for inner products in possibly different
% Hilbert spaces. Also, $M^\dag$ stands for the (Moore-Penrose)
% pseudo-inverse of the matrix $M$ (\citet{penrose1955generalized}), and $\cR(M)$
% stands for the range of the matrix $M$.
Next, let $T>0$ be a fixed
time horizon. For any $t\in[0,T)$ and Euclidean space $\dbH$, let
$$\ba{ll}
C([t,T];\dbH)=\Big\{\f:[t,T]\to\dbH\bigm|\f(\cd)\hb{ is
  continuous }\1n\Big\},\\
\ns\ds
L^p(t,T;\dbH)=\left\{\f:[t,T]\to\dbH\biggm|\int_t^T|\f(s)|^pds<\i\right\},\q1\les p<\i,\\
\ns\ds
L^\infty(t,T;\dbH)=\left\{\f:[t,T]\to\dbH\biggm|\esssup_{s\in[t,T]}|\f(s)|<\i\right\}.\ea$$
We denote
$$\ba{ll}
\ns\ds L^2_{\cF_T}(\O;\dbH)=\Big\{\xi:\O\to\dbH\bigm|\xi\hb{ is
  $\cF_T$-measurable, }\dbE|\xi|^2<\i\Big\},\\
\ns\ds
L_\dbF^2(t,T;\dbH)=\left\{\f:[t,T]\times\O\to\dbH\bigm|\f(\cd)\hb{ is
    $\dbF$-progressively measurable},\dbE\int^T_t|\f(s)|^2ds<\i\right\},\\
\ns\ds
L_\dbF^2(\O;C([t,T];\dbH))=\left\{\f:[t,T]\times\O\to\dbH\bigm|\f(\cd)\hb{
    is $\dbF$-adapted, continuous, }\dbE\left[\sup_{s\in[t,T]}|\f(s)|^2\right]<\i\right\},\\
\ns\ds L^2_\dbF(\O;L^1(t,T;\dbH))=\left\{\f:[t,T]\times
  \O\to\dbH\bigm|\f(\cd)\hb{ is $\dbF$-progressively measurable},
  \dbE\left(\int_t^T|\f(s)|ds\right)^2<\i\right\}.\ea$$
For an $\dbS^n$-valued function $F(\cd)$ on $[t,T]$, we use the
notation $F(\cd)\gg0$ to indicate that $F(\cd)$ is uniformly
positive definite on $[t,T]$, i.e., there exists a constant
$\delta>0$ such that
$$F(s)\ges\delta I,\qq\ae~s\in[t,T].$$

Now we start to formulate our system. We identify the state space of the chain $\a$ with a finite set $S :=\{1, 2 \dots, D\}$, where $D\in \mathbb{N}$ and suppose that the chain is homogeneous and irreducible. To specify statistical or probabilistic properties of the chain $\a$, we define the generator
$\l(t) := [\l_{ij}(t)]_{i, j = 1, 2, \dots, D}$ of the chain under $\mathbb{P}$.
This is also called the rate matrix, or the $Q$-matrix. Here, for each $i, j = 1, 2, \dots, D$,
$\l_{ij}(t)$ is the constant transition intensity of the chain from state $i$ to state $j$
at time $t$. Note that $\l_{ij}(t) \ge 0$, for $i \neq j$ and
$\sum^{D}_{j = 1} \l_{ij}(t) = 0$, so $\l_{ii}(t) \le 0$. In what follows for each $i, j = 1,
2, \dots, D$ with $i \neq j$, we suppose that $\l_{ij}(t) > 0$, so
$\l_{ii}(t) < 0$. For each fixed $j = 1, 2, \cdots, D$, let $N_j(t)$ be the number of jumps into state $j$ up to time $t$ and set
$$\l_j (t) := \int_0^t\l_{\a(s-)\, j}I_{\{\a (s-)\neq j\}}ds=\sum^{D}_{i = 1, i \neq j}\int^{t}_{0}\l_{ij}(s)I_{\{\a(s-)=i\}} d s.$$
Following Elliott et al. \cite{elliott1994hmm},  we have that for each $j=1,2,\cdots, D$,
\begin{eqnarray}
  \label{eq:N}
  \widetilde{N}_j (t):=N_j(t)-\l_j(t)
\end{eqnarray}
is an $(\dbF, \dbP)$-martingale.

Consider the following
controlled Markov regime switching linear stochastic differential equation (SDE, for short)
on a finite horizon $[t,T]$:
\begin{equation}
  \label{state}
  \left\{  \begin{aligned}
    dX^u(s;t,x,i)&=\big[A(s,\a(s))X^u(s;t,x,i)+B(s,\a(s))u(s)+b(s,\a(s))\big]ds\\
      &\qq +\big[C(s,\a(s))X^u(s;t,x,i)+D(s,\a(s))u(s)+\si(s,\a(s))\big]dW(s),
      \qq s\in[t,T], \\
      X^u(t;t,x,i)&=x,\q \a(t)=i,
    \end{aligned}
    \right.
\end{equation}
where $A(\cd,\cd), B(\cd,\cd), C(\cd,\cd), D(\cd,\cd )$ are given deterministic
matrix-valued functions of proper dimensions, and $b(\cd,\cd), \si(\cd,\cd)$
are vector-valued $\dbF$-progressively measurable processes. In the
above, $X^u(\cd\,;t,x,i)$, valued in $\dbR^n$, is the {\it state process}, and
$u(\cd)$, valued in $\dbR^m$, is the {\it control process}.  Any $u(\cd)$ is called an {\it admissible control} on $[t,T]$, if it belongs to the  following Hilbert space:
$$\cU[t,T]=\left\{u:[t,T]\times\O\to\dbR^m\bigm|u(\cd)\hb{ is $\dbF$-progressively
measurable, }\dbE\int_t^T|u(s)|^2ds<\i\right\}.$$
%
% Any $u(\cd)\in\cU[t,T]$ is called an {\it admissible control} (on
% $[t,T]$). Under some mild conditions on the coefficients, for any
% {\it initial pair} $(t,x,i)\in[0,T)\times\dbR^n\times \cS$ and admissible
% control $u(\cd)\in\cU[t,T]$, (\ref{state}) admits a unique strong
% solution $X^u(\cd\, ;t,x,i)$.
For any admissible control $u(\cd)$, we consider the following general quadratic cost functional:
\begin{equation}
  \label{cost}
{\small  \begin{aligned}
   J(t,x,i;u(\cd))\deq&\dbE\Bigg\{\Blan G(T,\a(T))X^u(T;t,x,i)+2g(T,\a(T)),X^u(T;t,x,i)\Bran\\
    &\qq +\int_t^T\bigg[\Blan Q(s,\a(s))X^u(s;t,x,i)+2q(s,\a(s)), X^u(s;t,x,i)\Bran\\
    &\qq\qq\qq+2\Blan\1nS(s,\a(s))X^u(s;t,x,i),u(s)\Bran +\Blan R(s,\a(s))u(s)+2\rho(s,\a(s)),u(s)\Bran\bigg] ds\Bigg\},
     \end{aligned} }
\end{equation}
% \begin{equation}
%   \label{cost}
%   \ba{ll}
%   %
%   \ns\ds J(t,x,i;u(\cd))\deq\dbE\Bigg\{\lan G(T,\a(T))X^{x,u}(T),X^{x,u}(T)\ran+2\lan g(T,
%   \a(T)),X^{x,u}(T)\ran\\
%   %
%   \ns\ds\qq\qq\qq\qq\qq+\int_t^T\llan\begin{pmatrix}Q(s,\a(s))&\1nS(s,\a(s))^\top\\S(s,\a(s))&\1nR(s,\a(s))\end{pmatrix}
%     \begin{pmatrix}X^{x,u}(s)\\ u(s)\end{pmatrix},
%     \begin{pmatrix}X^{x,u}(s)\\u(s)\end{pmatrix}\rran ds\\
%     %
%     \ns\ds\qq\qq\qq\qq\qq+2\int_t^T\llan\begin{pmatrix}q(s,\a(s))\\ \rho(s,\a(s))\end{pmatrix},\begin{pmatrix}X^{x,u}(s)\\u(s)\end{pmatrix}\rran ds\Bigg\},\ea
% \end{equation}
%
where $G(T,i)$ is a symmetric matrix, $Q(\cd,i)$, $S(\cd,i)$, $R(\cd,i), i=1,\cdots,D$ are deterministic matrix-valued
functions of proper dimensions with $Q(\cd,i)^\top=Q(\cd,i)$, $R(\cd,i)^\top=R(\cd,i)$; $g(T,\cd)$ is allowed
to be an $\cF_T$-measurable random variable and $q(\cd,\cd), \rho(\cd,\cd)$
are allowed to be vector-valued $\dbF$-progressively measurable processes.

The following standard assumptions will be in force throughout this paper.

\ms

{\bf(H1)} The coefficients of the state equation satisfy the following: for each $i\in \cS$,
$$\left\{\2n\ba{ll}
  \ns\ds A(\cd,i)\in L^1(0,T;\dbR^{n\times n}),\q B(\cd,i)\in L^2(0,T;\dbR^{n\times m}), \q b(\cd,i)\in L^2_\dbF(\O;L^1(0,T;\dbR^n)),\\
  \ns\ds C(\cd,i)\in L^2(0,T;\dbR^{n\times n}),\q D(\cd,i)\in L^\i(0,T;\dbR^{n\times m}), \q\si(\cd,i)\in L_\dbF^2(0,T;\dbR^n).\ea\right.$$

{\bf(H2)} The weighting coefficients in the cost functional satisfy the following: for each $i\in \cS$
$$\left\{\2n\ba{ll}
  \ns\ds  G(T,i)\in\dbS^n,\q Q(\cd, i)\in L^1(0,T;\dbS^n),\q S(\cd, i)\in L^2(0,T;\dbR^{m\times
    n}),\q R(\cd, i)\in L^\infty(0,T;\dbS^m),\\
  \ns\ds g(T,i)\in L^2_{\cF_T}(\O;\dbR^n),\q q(\cd, i)\in L^2_\dbF(\O;L^1(0,T;\dbR^n)),\q\rho(\cd, i)\in
  L_\dbF^2(0,T;\dbR^m).\ea\right.$$
%
%\setreviewsoff

\ms

Now we sate the stochastic LQ optimal control problem for the Markov regime switching system as follows.
\begin{prob}
  \bf (M-SLQ) \rm For any given initial pair
  $(t,x,i)\in[0,T)\times\dbR^n\times \cS$, find a $u^*(\cd)\in\cU[t,T]$, such that
  \begin{equation}
    \label{optim}
    J(t,x,i;u^*(\cd))=\inf_{u(\cd)\in\cU[t,T]}J(t,x,i;u(\cd))\deq V(t,x,i).
  \end{equation}
\end{prob}
\noindent Any $u^*(\cd)\in\cU[t,T]$ satisfying
\eqref{optim} is called an {\it optimal control} of Problem (M-SLQ)
for the initial pair $(t,x,i)$, and the corresponding path $X^*(\cd)\equiv
X^{u^*}(\cd\,; t,x,i)$ is called an {\it optimal state process};
the pair $(X^*(\cd),u^*(\cd))$ is called an {\it optimal pair}. The
function $V(\cd\,,\cd\, , \cd)$ is called the {\it value function} of
Problem (M-SLQ). When $b(\cd,\cd), \si(\cd,\cd), g(T,\cd), q(\cd,\cd), \rho(\cd,\cd)=0$, we denote the corresponding Problem (M-SLQ) by Problem $\hb{(M-SLQ)}^0$.
The corresponding cost functional and value function are denoted by $J^0(t,x,i;u(\cd))$ and $V^0(t,x,i)$, respectively.

Similar to Sun et al. \cite{Sun2016olcls}, we introduce the following  definitions of open-loop (closed-loop) optimal control.

\begin{defn}\label{sec:defnofopen-closeloop}
% (i) Problem (M-SLQ) is said to be {\it finite
  %   at initial pair $(t,x,i)\in[0,T]\times\dbR^n\times \cS$} if%
  % \begin{eqnarray}
  %   \label{V>-infty}
  %   V(t,x,i)>-\i.
  % \end{eqnarray}
  % %
  % Problem (M-SLQ) is said to be {\it finite at $t\in[0,T]$} if
  % (\ref{V>-infty}) holds for all $(x,i)\in\dbR^n\times\cS$, and Problem (M-SLQ) is
  % said to be {\it finite} if (\ref{V>-infty}) holds for all
  % $(t,x,i)\in[0,T]\times\dbR^n\times \cS$.

  % \ms

  (i) An element $u^*(\cd)\in\cU[t,T]$ is called an {\it open-loop
    optimal control} of Problem (M-SLQ) for the initial pair
  $(t,x,i)\in[0,T]\times\dbR^n\times\cS$ if
  \begin{eqnarray}
    \label{open-opti}
    J(t,x,i;u^*(\cd))\les J(t,x,i;u(\cd)),\qq\forall
    u(\cd)\in\cU[t,T].
  \end{eqnarray}
  %
  % If an open-loop optimal control (uniquely) exists for
  % $(t,x,i)\in[0,T]\times\dbR^n\times\cS$, Problem (M-SLQ) is said to be ({\it
  %   uniquely}) {\it open-loop solvable at $(t,x,i)\in[0,T]\times\dbR^n\times\cS$}.
  % Problem (M-SLQ) is said to be ({\it uniquely}) {\it open-loop solvable
  %   at $t\in[0,T)$} if for the given $t$, (\ref{open-opti}) holds for
  % all $(x,i)\in\dbR^n\times\cS$, and Problem (M-SLQ) is said to be ({\it uniquely})
  % {\it open-loop solvable} (on $[0,T)\times\dbR^n\times\cS$) if it is
  % (uniquely) open-loop solvable at all $(t,x,i)\in[0,T)\times\dbR^n\times\cS$.

  (ii) A pair $(\Th^*(\cd),v^*(\cd))\in L^2(t,T;\dbR^{m\times
    n})\times\cU[t,T]$ is called a {\it closed-loop optimal strategy} of
  Problem (M-SLQ) on $[t,T]$ if
  \begin{eqnarray}
    \label{closed-opti}\ba{ll}
    \ns\ds J(t,x,i;\Th^*(\cd)X^*(\cd)+v^*(\cd))\les
    J(t,x,i;u(\cd)),\qq\forall (x,i)\in\dbR^n\times\cS,\q u(\cd)\in\cU[t,T],\ea
  \end{eqnarray}
  where $X^*(\cd)$ is the strong solution to the following closed-loop
  system:
  \begin{eqnarray}
    \label{closed-loop0}\left\{\2n\ba{ll}
    dX^*(s)=\Big\{\big[A(s,\a(s))+B(s,\a(s))\Th^*(s)\big]X^*(s)+B(s,\a(s))v^*(s)+b(s,\a(s))\Big\}ds\\
    \qq\qq+\Big\{\big[C(s,\a(s))+D(s,\a(s))\Th^*(s)\big]X^*(s)+D(s,\a(s))v^*(s)+\si(s,\a(s))\Big\}dW(s),
    % \qq s\in[t,T],
    \\
    X^*(t)=x.\ea\right.\hspace{-2cm}
  \end{eqnarray}
\end{defn}
% \bf Definition 2.1.
%
% If a closed-loop optimal strategy (uniquely) exists on $[t,T]$,
% Problem (M-SLQ) is said to be ({\it uniquely}) {\it closed-loop
% solvable on $[t,T]$}. Problem (M-SLQ) is said to be ({\it uniquely})
% {\it closed-loop solvable} if it is (uniquely) closed-loop solvable
% on any $[t,T]$.

% \ms

\rm

\begin{rmk}
  We emphasize that in the definition of closed-loop optimal strategy,
  (\ref{closed-opti}) has to be true for all $(x,i)\in\dbR^n\times \cS$. One sees
  that if $(\Th^*(\cd),v^*(\cd))$ is a closed-loop optimal strategy of
  problem (M-SLQ) on $[t,T]$, then the outcome
  $u^*(\cd)\equiv\Th^*(\cd)X^*(\cd)+v^*(\cd)$ is an open-loop optimal
  control of Problem (M-SLQ) for the initial pair $(t,X^*(t),\a(t))$. Hence,
  the existence of closed-loop optimal strategies implies the
  existence of open-loop optimal controls. But, the existence of
  open-loop optimal controls does not necessarily imply the existence
  of a closed-loop optimal strategy.
\end{rmk}

% Due to the above indicated situation, unlike in \citet{sun2014linear}, and in classical literature on LQ problems, we distinguish
% the notions of open-loop and closed-loop solvabilities for Problem
% (M-SLQ). We repeat here that for given initial time $t\in[0,T)$, an
% open-loop optimal control is allowed to depend on the initial state
% $x$, whereas, a closed-loop optimal strategy is required to be
% independent of the initial state $x$. Because of the nature of
% closed-loop strategies, we define the finiteness of Problem (M-SLQ)
% only in terms of open-loop controls.

To simply notation of our further analysis, we introduce the following forward-backward stochastic differential equation (FBSDE for short) on a finite horizon $[t,T]$:
\begin{equation}
  \label{generalstate}
  \left\{
    \begin{aligned}
      dX^u(s;t,x,i)=&\big[A(s,\a(s))X^u(s;t,x,i)+B(s,\a(s))u(s)+b(s,\a(s))\big]ds\\
      &+\big[C(s,\a(s))X^u(s;t,x,i)+D(s,\a(s))u(s)+\si(s,\a(s))\big]dW(s),\\
      dY^u(s;t,x,i)=&-\big[A(s,\a(s))^\top Y^u(s;t,x,i)+C(s,\a(s))^\top Z^u(s;t,x,i)\\
      &+Q(s,\a(s))X^u(s;t,x,i)+S(s,\a(s))^\top u(s)+q(s,\a(s))\big]ds\\
      &+Z^u(s;t,x,i)dW(s)+\sum_{k=1}^D\G_k^u(s;t,x,i)d\widetilde{N}_k(s)\qq s\in[t,T], \\
      X^u(t;t,x,i)=&x, \q \a(t)=i, \q  Y^u(T;t,x,i)=G(T,\a(T))X^u(T;t,x,i)+g(T,\a(T)).
    \end{aligned}\right.
\end{equation}
The solution of the above FBSDE system is denoted by $(X^u(\cd\,;t,x,i), Y^u(\cd\,;t,x,i), Z^u(\cd\,;t,x,i),\G^u(\cd\,;t,x,i))$, where $\G^u(\cd\,;t,x,i):=(\G_1^u(\cd\,;t,x,i),\cdots, \G_D^u(\cd\,;t,x,i))$. If the control $u(\cd)$ is chose as $\Th(\cd)X(\cd)+v(\cd)$, we will use the notation $$(X^{\Th,v}(\cd\,;t,x,i), Y^{\Th,v}(\cd\,;t,x,i), Z^{\Th,v}(\cd\,;t,x,i),\G^{\Th,v}(\cd\,;t,x,i))$$ denoting by the solution of the above FBSDE.  If $b(\cd,\cd)=\si(\cd,\cd)=q(\cd,\cd)=g(\cd,\cd)=0$, the solution of the above FBSDE is denoted by
\begin{equation*}
  (X_0^u(\cd\,;t,x,i), Y_0^u(\cd\, ;t,x,i), Z_0^u(\cd\, ;t,x,i),\G_0^u(\cd\, ;t,x,i)).
\end{equation*}

\ms

\ms

\section{Representation of the Cost Functional}

In this section, we will present a representation of the cost
functional for Problem (M-SLQ), which plays a crucial role in the study of
open-loop/closed-loop solvability of Problem (M-SLQ). Unlike the method used in Yong and Zhou \cite{yong1999sch}, we derive the representation of the cost functional using the technique of  It{\^o}'s formula.

% from which we will obtain some basic
% conditions ensuring the convexity of the cost functional. Convexity
% of the cost functional will play a crucial role in the study of
% finiteness and open-loop/closed-loop solvability of Problem (M-SLQ).
% The following proposition is a summary of some relevant results
% found in \citet{yong1999sch}.

\begin{prop}
  \label{RP-cost}
  \sl Let {\rm(H1)--(H2)} hold and $(X^u(\cd\,;t,x,i), Y^u(\cd\,;t,x,i), Z^u(\cd\,;t,x,i),\G^u(\cd\,;t,x,i))$ is the solution of \eqref{generalstate}. Then for $(x,i,u(\cd))\in\dbR^n\times\cS\times\cU[t,T]$,
  \begin{eqnarray}
    \label{J-rep1}
    \begin{aligned}
       J^0(t,x,i;u(\cd))&=\langle M_2(t,i)u,u\rangle+2\langle M_1(t,i)x,u\rangle+\langle M_0(t,i)x,x\rangle,\\
       J(t,x,i;u(\cd))&=\langle M_2(t,i)u,u\rangle+2\langle M_1(t,i)x,u\rangle+\langle M_0(t,i)x,x\rangle
      +2\langle \nu_t, u\rangle+2\langle y_t, x\rangle+c_t,
    \end{aligned}
    \end{eqnarray}
where
\begin{equation}
   \begin{aligned}
  %\label{L_0-rep}
  M_0(t,i)x=&\dbE[Y_0^0(t;t,x,i)],\\
  %\label{L_1-rep}
  (M_1(t,i)x)(s)=&B(s,\a(s))^\top Y_0^0(s;t,x,i)+D(s,\a(s))^\top Z_0^0(s;t,x,i)\\
  &+S(s,\a(s))X_0^0(s;t,x,i), \qq s\in[t,T],\\    %\label{L_2-rep}
  (M_2(t,i)u(\cd))(s)=&B(s,\a(s))^\top Y_0^u(s;t,0,i)+D(s,\a(s))^\top
  Z_0^u(s;t,0,i)\\
  &+S(s,\a(s))X_0^u(s;t,0,i)+R(s,\a(s))u(s), \qq s\in[t,T],
\end{aligned}
\end{equation}
and
\begin{equation}
  %\label{eq:9}
  \begin{aligned}
    y_t=&\dbE[Y^0(t;t,0,i)],\\
    v_t(s)=&\big[B(s,\a(s))]^\top Y^0(s;t,0,i)+D(s,\a(s))^\top Z^0(s;t,0,i)\\
    &\q+S(s,\a(s))X^0(s;t,0,i)+\rho(s,\a(s)),  \qq s\in[t,T],\\
    c_t=&\dbE\bigg[\llan G(T,\a(T))X^0(T;t,0,i)+2g(T,\a(T)),X^0(T;t,0,i)\rran\\
    &\q+\int_t^T\llan Q(s,\a(s))X^0(s;t,0,i)+2q(s,\a(s)),X^0(s;t,0,i)\rran ds\bigg].
  \end{aligned}
\end{equation}
\end{prop}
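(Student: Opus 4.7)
The plan is to apply the generalized It\^o formula to the scalar process $\langle Y^u(s;t,x,i), X^u(s;t,x,i)\rangle$ where $(X^u,Y^u,Z^u,\G^u)$ solves \eqref{generalstate}. Because $X^u$ is continuous and the jumps in $Y^u$ come only from the compensated martingales $\wt N_k$, the discontinuous cross variation $\sum\Delta Y^u\Delta X^u$ vanishes and only the Brownian cross variation $(Z^u)^\top[CX^u+Du+\si]\,ds$ appears alongside the usual It\^o drift terms. After the algebraic cancellations $(Y^u)^\top AX^u=(X^u)^\top A^\top Y^u$ and $(Z^u)^\top CX^u=(X^u)^\top C^\top Z^u$, integrating from $t$ to $T$, and taking expectation (the $dW$- and $d\wt N_k$-integrals being true martingales under (H1)--(H2) by standard $L^2$ a priori estimates for regime-switching FBSDEs), I obtain
\begin{eqnarray*}
\dbE\langle Y^u(T),X^u(T)\rangle - \langle\dbE[Y^u(t)],x\rangle
&=& \dbE\int_t^T\big[\langle B^\top Y^u+D^\top Z^u-SX^u,u\rangle - \langle QX^u,X^u\rangle \\
&& \qq +\,\langle Y^u,b\rangle - \langle q,X^u\rangle + \langle Z^u,\si\rangle\big]\,ds.
\end{eqnarray*}
Substituting $Y^u(T)=G(T,\a(T))X^u(T)+g(T,\a(T))$ on the left-hand side allows me to replace the ``quadratic'' pieces $\dbE\langle GX^u(T),X^u(T)\rangle$ and $\dbE\int_t^T\langle QX^u,X^u\rangle\,ds$ that appear in the cost \eqref{cost}, leaving an expression in which the control $u$ is paired against the ``first-order'' quantity $B^\top Y^u+D^\top Z^u+SX^u+Ru+2\rho$.

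Next, I will invoke the linearity of \eqref{generalstate} in $(x,u)$ and in the inhomogeneous data $(b,\si,g,q)$ to decompose
\[
(X^u,Y^u,Z^u)(\cdot;t,x,i)=(X_0^0,Y_0^0,Z_0^0)(\cdot;t,x,i)+(X_0^u,Y_0^u,Z_0^u)(\cdot;t,0,i)+(X^0,Y^0,Z^0)(\cdot;t,0,i),
\]
insert this into the expression just obtained, and regroup by block. The pure-$x$ block will produce $\langle M_0(t,i)x,x\rangle$, the pure-$u$ block will give $\langle M_2(t,i)u,u\rangle$, and the inhomogeneous block (together with its cross pairings against the first two) will supply the constant $c_t$ and the linear terms $2\langle y_t,x\rangle$ and $2\langle \nu_t,u\rangle$. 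For the cross block coupling $(X_0^0,Y_0^0,Z_0^0)(\cdot;t,x,i)$ with $(X_0^u,Y_0^u,Z_0^u)(\cdot;t,0,i)$, the boundary pairing $\langle\dbE[Y_0^u(t;t,0,i)],x\rangle$ that arises from the It\^o identity must be identified with $\langle M_1(t,i)x,u\rangle_{L^2}$; I will establish this duality by a second application of It\^o's formula, now to $\langle Y_0^u(\cdot;t,0,i),X_0^0(\cdot;t,x,i)\rangle$, which after the same cancellations produces exactly that identity. Since this pairing contributes once from the boundary term and once from the $u$-integrand, it enters with the factor $2$ claimed. Specializing to $b=\si=g=q=\rho=0$ yields the first identity for $J^0$; retaining the inhomogeneous data yields the second identity for $J$.

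The main obstacle is bookkeeping rather than ideas. The regime-switching jumps cause no difficulty in the It\^o identity itself because $X^u$ is continuous, so the $d\wt N_k$-integrals appear only in the martingale part; the only technical checkpoint is that those integrals are genuine martingales, which follows from standard localization together with the $L^2$-estimates on $(X^u,Y^u,Z^u,\G^u)$ furnished by (H1)--(H2). The delicate part is tracking all the bilinear cross terms generated by the three-way decomposition and verifying that the coefficients of the $x$-linear, $u$-linear, and constant parts line up exactly with $M_0,M_1,M_2,y_t,\nu_t,c_t$ as stated.
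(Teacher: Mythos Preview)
Your proposal is correct and rests on the same two tools as the paper: It\^o's formula on $\langle Y,X\rangle$ pairings together with the linearity of the FBSDE in $(x,u)$ and in the inhomogeneous data. The organization differs slightly. You apply It\^o once to the full pairing $\langle Y^u,X^u\rangle$ and then split the solution three ways as $(X_0^0,Y_0^0,Z_0^0)+(X_0^u,Y_0^u,Z_0^u)+(X^0,Y^0,Z^0)$. The paper instead decomposes the cost functional first (which involves only $X$) via two nested two-way splits---$X_0^u(\cdot;t,x,i)=X_0^u(\cdot;t,0,i)+X_0^0(\cdot;t,x,i)$ for $J^0$, then $X^u(\cdot;t,x,i)=X_0^u(\cdot;t,x,i)+X^0(\cdot;t,0,i)$ for $J$---and applies It\^o separately to each resulting $\langle Y_\bullet,X_\bullet\rangle$ product. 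The paper's ordering is a little more economical: $J^0$ reappears as an already-identified sub-block of $J$, and $c_t$ is read off directly from its definition rather than extracted via an additional It\^o identity. Your route works equally well, but, as you correctly anticipate, the three-way split generates more cross terms: besides the $M_1$ duality (It\^o on $\langle Y_0^u,X_0^0\rangle$) you will also need the analogous pairings $\langle Y^0,X_0^0\rangle$ and $\langle Y^0,X_0^u\rangle$ to collapse the leftover inhomogeneous pieces (those containing $g,b,q,\si$ against $X_0^0$ or $X_0^u,Y_0^u,Z_0^u$) into the second copies of $\langle y_t,x\rangle$ and $\langle \nu_t,u\rangle$.
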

\begin{proof}
  Let
  \begin{equation*}
    \begin{aligned}
      I_1:=&\dbE\bigg[\Blan G(T,\a(T))X_0^u(T;t,x,i),X_0^u(T;t,x,i)\Bran\bigg],\\
      I_2:=&\dbE\bigg\{\int_t^T\bigg[\Blan Q(s,\a(s))X_0^u(s;t,x,i),X_0^u(s;t,x,i)\Bran\\
      &\qq+2\Blan S(s,\a(s))X_0^u(s;t,x,i), u(s)\Bran+\Blan R(s,\a(s))u(s),u(s)\Bran\bigg]ds\bigg\},
    \end{aligned}
  \end{equation*}
  % \begin{eqnarray*}
  %   I_1&:=&\dbE\bigg[\Blan G(T,\a(T))X_0^u(T;t,x,i),X_0^u(T;t,x,i)\Bran\bigg]\\
  %   I_2&:=&\dbE\bigg\{\int_t^T\bigg[\Blan Q(s,\a(s))X_0^u(s;t,x,i),X_0^u(s;t,x,i)\Bran+2\Blan S(s,\a(s))X_0^u(s;t,x,i), u(s)\Bran+\Blan R(s,\a(s))u(s),u(s)\Bran\bigg]ds\bigg\},
  % \end{eqnarray*}
  and we have
  \begin{eqnarray*}
    J^0(t,x,i;u(\cd))=I_1+I_2.
  \end{eqnarray*}
Observing that
\begin{eqnarray}
  X_0^u(\cd\,;t,x,i)=X_0^u(\cd\,;t,0,i)+X_0^0(\cd\,;t,x,i),
\end{eqnarray}
and therefore
\begin{equation*}
  \begin{aligned}
    I_1=&\dbE\bigg[\Blan G(T,\a(T))X_0^u(T;t,0,i),X_0^u(T;t,0,i)\Bran,\\
    &\qq\q+2\Blan G(T,\a(T))X_0^0(T;t,x,i),X_0^u(T;t,0,i)\Bran+\Blan G(T,\a(T))X_0^0(T;t,x,i),X_0^0(T;t,x,i)\Bran\bigg],\\
    I_2=&\dbE\bigg\{\int_t^T\bigg[\Blan Q(s,\a(s))X_0^u(s;t,0,i),X_0^u(s;t,0,i)\Bran+\Blan Q(s,\a(s))X_0^0(s;t,x,i), X_0^0(s;t,x,i)\Bran\\
    &\qq\q+2\Blan Q(s,\a(s))X_0^0(s;t,x,i), X_0^u(s;t,0,i)\Bran+2\Blan S(s,\a(s))X_0^u(s;t,0,i), u(s)\Bran\\
    &\qq\q+2\Blan S(s,\a(s))X_0^0(s;t,x,i),u(s)\Bran+\Blan R(s,\a(s))u(s),u(s)\Bran\bigg]ds\bigg\}.
  \end{aligned}
\end{equation*}
Applying It\^o's formula to $\langle Y_0^u(s;t,0,i),X_0^u(s;t,0,i)\rangle, \langle Y_0^0(s;t,x,i),X_0^u(s;t,0,i)\rangle$ and $\langle Y_0^0(s;t,x,i),X_0^0(s;t,x,i)\rangle$, we have
\begin{eqnarray*}
  J^0(t,x,i;u(\cd))&=&I_1+I_2\\
                   &=&\dbE\int_t^T\lan (M_2(t,i)u(\cd))(s), u(s)\ran ds+2\dbE\int_t^T\lan (M_1(t,i)x)(s), u(s)\ran ds +\lan\dbE[Y_0^0(t;t,x,i)],x\ran\\
                   &=&\langle M_2(t,i)u,u\rangle+2\langle M_1(t,i)x,u\rangle+\langle M_0(t,i)x,x\rangle.
\end{eqnarray*}
Let
\begin{eqnarray*}
  I_3&:=&\dbE\bigg[\Blan G(T,\a(T))X^u(T;t,x,i)+2g(T,\a(T)),X^u(T;t,x,i)\Bran\bigg],\\
  % +2\Blan g(T,\a(T)),X^{x,u}(T)\Bran\bigg]\\
  I_4&:=&\dbE\bigg\{\int_t^T\bigg[\Blan Q(s,\a(s))X^u(s;t,x,i)+2q(s,\a(s)),X^u(s;t,x,i)\Bran\\
     &&\qq\qq\q+2\Blan S(s,\a(s))X^u(s;t,x,i), u(s)\Bran+\Blan R(s,\a(s))u(s)+2\rho(s,\a(s)),u(s)\Bran\bigg]ds\bigg\},
\end{eqnarray*}
and we have
\begin{eqnarray*}
  J(t,x,i;u(\cd))=I_3+I_4.
\end{eqnarray*}
Observing that
\begin{eqnarray}
  X^u(\cd\,;t,x,i)=X_0^u(\cd\,;t,x,i)+X^0(\cd\,;t,0,i),
\end{eqnarray}
and therefore
\begin{eqnarray*}
  I_3=I_{31}+I_{32}+I_{33},\qq I_4=I_{41}+I_{42}+I_{43},
\end{eqnarray*}
where
\begin{eqnarray*}
  &&I_{31}:=\dbE\Blan G(T,\a(T))X_0^u(T;t,x,i),X_0^u(T;t,x,i)\Bran,\\
  &&I_{32}:=2\dbE \Blan G(T,\a(T))X^0(T;t,0,i)+g(T,\a(T)),X_0^u(T;t,x,i)\Bran,\\
  &&I_{33}:=\dbE\Blan G(T,\a(T))X^0(T;t,0,i)+2g(T,\a(T)),X^0(T;t,0,i),
\end{eqnarray*}
and
\begin{equation*}
  \begin{aligned}
    I_{41}&:=\dbE\int_t^T\bigg[\Blan Q(s,\a(s))X_0^u(s;t,x,i),X_0^u(s;t,x,i)\Bran\\
    &\qq\qq+2\Blan S(s,\a(s))X_0^u(s;t,x,i),u(s)\Bran+\Blan R(s,\a(s))u(s),u(s)\Bran\bigg]ds,\\
    I_{42}&:=2\dbE\int_t^T\bigg[\Blan Q(s,\a(s))X^0(s;t,0,i)+q(s,\a(s)), X_0^u(s;t,x,i)\Bran\\
    &\qq\qq+2\Blan S(s,\a(s))X^0(s;t,0,i)+\rho(s,\a(s)),u(s)\Bran\bigg]ds,\\
   I_{43}&:=\dbE\int_t^T\bigg[\Blan Q(s,\a(s))X^0(s;t,0,i)+2q(s,\a(s)), X^0(s;t,0,i)\Bran\bigg]ds.
  \end{aligned}
\end{equation*}
% \begin{eqnarray*}
%   && I_{41}:=\dbE\int_t^T\bigg[\Blan Q(s,\a(s))X_0^u(s;t,x,i),X_0^u(s;t,x,i)\Bran+2\Blan S(s,\a(s))X_0^u(s;t,x,i),u(s)\Bran+\Blan R(s,\a(s))u(s),u(s)\Bran\bigg]ds\\
%   &&I_{42}:=2\dbE\int_t^T\bigg[\Blan Q(s,\a(s))X^0(s;t,0,i)+q(s,\a(s)), X_0^u(s;t,x,i)\Bran+2\Blan S(s,\a(s))X^0(s;t,0,i)+\rho(s,\a(s)),u(s)\Bran\bigg]ds\\
%   &&I_{43}:=\dbE\int_t^T\bigg[\Blan Q(s,\a(s))X^0(s;t,0,i)+2q(s,\a(s)), X^0(s;t,0,i)\Bran\bigg]ds.
% \end{eqnarray*}
% \begin{eqnarray*}
%   &&I_3=\dbE\bigg[\Blan G(T,\a(T))X_0^{x,u}(T),X_0^{x,u}(T)\Bran\\
%      &&\qq\qq+2\Blan G(T,\a(T))X^{0,0}(T)+g(T,\a(T)),X_0^{x,u}(T)\Bran\\
%      &&\qq\qq+\Blan G(T,\a(T))X^{0,0}(T)+2g(T,\a(T)),X^{0,0}(T)\Bran\bigg],\\
%   &&I_4=\dbE\bigg\{\int_t^T\bigg[\Blan Q(s,\a(s))X_0^{x,u}(s),X_0^{x,u}(s)\Bran+2\Blan S(s,\a(s))X_0^{x,u}(s),u(s)\Bran+\Blan R(s,\a(s))u(s),u(s)\Bran\\
%      &&\qq\qq\qq+2\Blan Q(s,\a(s))X^{0,0}(s)+q(s,\a(s)), X_0^{x,u}(s)\Bran+2\Blan S(s,\a(s))X^{0,0}(s)+\rho(s,\a(s)),u(s)\Bran\\
%      &&\qq\qq\qq+\Blan Q(s,\a(s))X^{0,0}(s)+2q(s,\a(s)), X^{0,0}(s)\Bran\bigg]ds\bigg\}.
% \end{eqnarray*}
Applying It\^o's formula to $\lan Y^0(s;t,0,i), X_0^u(s;t,x,i)\ran$ yields
\begin{eqnarray*}
  I_{32}+I_{42}=2\lan \dbE Y^0(t;t,0,i),x\ran+2\dbE\int_t^T\lan v_t(s), u(s)\ran ds=2\lan y_t, x\ran+2\lan \nu_t, u\ran.
\end{eqnarray*}
%%% B(s,\a(s))^\top Y^{0,0}(s)+D(s,\a(s))^\top Z^{0,0}(s)+S(s,\a(s))X^{0,0}(s)+\rho(s,\a(s))
Noting that
\begin{eqnarray*}
  J^0(t,x,i;u(\cd))=I_{31}+I_{41},\qq c_t=I_{33}+I_{43}
\end{eqnarray*}
and therefore,
\begin{equation*}
  \begin{aligned}
    J(t,x,i;u(\cd))&= I_3+I_4=(I_{31}+I_{41})+(I_{32}+I_{42})+(I_{33}+I_{43})\\
   & = \langle M_2(t,i)u,u\rangle+2\langle M_1(t,i)x,u\rangle+\langle M_0(t,i)x,x\rangle
    +2\langle \nu_t, u\rangle+2\langle y_t, x\rangle+c_t.
  \end{aligned}
\end{equation*}
\end{proof}
Next we shall show that the above characterizes of operators $M_0(t,i)$ and $M_2(t,i)$  is equivalent to the results obtained by using the technique of function analysis.

\begin{prop}\label{sec:F-K} \sl
  $M_0(\cd,i)$ defined in \ref{RP-cost} admits the following Feynman-Kac representation:
  \begin{equation}
    \label{L_0}
      M_0(t,i)=\dbE\bigg[\F(T;t,i)^\top G(T,\a(T))\F(T;t,i)+\int_t^T\F(s;t,i)^\top Q(s,\a(s))\F(s;t,i)ds\bigg],
      \end{equation}
where $\F(\cd\,;t,i)$ is the solution to the following SDE for
  $\dbR^{n\times n}$-valued process:
  \begin{equation}
    \label{F}
    \left\{\begin{aligned}
        d\F(s;t,i)&=A(s,\a(s))\F(s;t,i)ds+C(s,\a(s))\F(s;t,i)dW(s),\qq s\in[t,T],\\
        \F(t;t,i)&=I, \q \a(t)=i.
    \end{aligned}
\right.
  \end{equation}
Furthermore, $M_0(t,i)$ also solves the following ordinary differential equations
\begin{equation}
  \label{3.8}
  \left\{\begin{aligned}
      \dot M_0(t,i)&+M_0(t,i)A(t,i)+A(t,i)^\top M_0(t,i)\\
                   &+C(t,i)^\top M_0(t,i)C(t,i)+Q(t,i)+\sum_{k=1}^D\lambda_{ik}(t)M_0(t,k)=0,\q (t,i)\in[0,T]\times\cS,\\
      M_0(T,i)&=G(T,i), \q i\in\cS.
  \end{aligned}
\right.
\end{equation}
  % \begin{eqnarray} \label{3.8}
  %  \hspace{0.8cm} \left\{\ba{l}
  %     %
  %   \dot M_0(t,i)+M_0(t,i)A(t,i)+A(t,i)^\top M_0(t,i)\\
  %   \hspace{1.25cm}+C(t,i)^\top
  %   M_0(t,i)C(t,i)+Q(t,i)+\sum_{k=1}^D\lambda_{ik}(t)M_0(t,k)=0,\q (t,i)\in[0,T]\times\cS,\\
  %   %
  %   M_0(T,i)=G(T,i), \q i\in\cS.\ea\right.
  % \end{eqnarray}
  % %
\end{prop}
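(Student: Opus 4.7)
The approach is two-fold: first derive the Feynman--Kac representation \eqref{L_0} using the FBSDE identity for $M_0(t,i)$, then show this representation satisfies the ODE system \eqref{3.8} via a verification argument.

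For the Feynman--Kac representation, I would start from the identity $M_0(t,i)x = \dbE[Y_0^0(t;t,x,i)]$ established in Proposition \ref{RP-cost}. Since $b = \si = q = g = \rho = 0$, the forward equation for $X_0^0(\cd\,;t,x,i)$ coincides with \eqref{F} acting on $x$, so by linearity $X_0^0(s;t,x,i) = \F(s;t,i)x$. I would then apply It\^o's formula to the product $\F(s;t,i)^\top Y_0^0(s;t,x,i)$ on $[t,T]$. The key cancellation: the $\F^\top A^\top Y$ drift from $d\F^\top \cdot Y$ cancels the $-\F^\top A^\top Y$ drift coming from the BSDE for $Y_0^0$, while the cross-variation $\F^\top C^\top Z\, ds$ from $d\F^\top \cdot dY$ cancels the $-\F^\top C^\top Z\, ds$ BSDE drift. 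What remains in the bounded-variation part is $-\F^\top Q X_0^0\, ds = -\F^\top Q\F x\, ds$; the $dW$- and $d\widetilde N_k$-integrals yield true martingales under (H1)--(H2). Integrating from $t$ to $T$, taking expectation, and using $\F(t;t,i)=I$ together with $Y_0^0(T;t,x,i)=G(T,\a(T))\F(T;t,i)x$, yields \eqref{L_0} by the arbitrariness of $x$.

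For the ODE system, the coupled linear system \eqref{3.8} on the finite state space $\cS$ admits a unique classical solution, call it $\widetilde M_0(\cd\,,i)$, by standard ODE theory (the coefficients are integrable in $t$ under (H1)--(H2), and the coupling through $\sum_k \l_{ik}\widetilde M_0(\cd,k)$ is bounded). I would then verify $\widetilde M_0 = M_0$ by applying the It\^o--Dynkin formula to $\F(s;t,i)^\top \widetilde M_0(s,\a(s))\F(s;t,i)$ on $[t,T]$. The Dynkin decomposition of $s\mapsto \widetilde M_0(s,\a(s))$ reads
\[
d\widetilde M_0(s,\a(s)) = \Big[\partial_s\widetilde M_0(s,\a(s))+\sum_{k=1}^D\l_{\a(s),k}(s)\widetilde M_0(s,k)\Big]ds+\sum_{k=1}^D[\widetilde M_0(s,k)-\widetilde M_0(s,\a(s-))]d\widetilde N_k(s).
\]
Combining this with $d\F = A\F ds+C\F dW$ (whose continuity kills all cross-variation with the chain) and invoking the ODE \eqref{3.8}, the bounded-variation part of $d[\F^\top \widetilde M_0(s,\a(s))\F]$ collapses exactly to $-\F^\top Q(s,\a(s))\F\,ds$. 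Taking expectation over $[t,T]$, using the terminal condition $\widetilde M_0(T,i)=G(T,i)$, and matching against \eqref{L_0} gives $\widetilde M_0(t,i) = M_0(t,i)$; hence $M_0$ solves \eqref{3.8}.

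The main obstacle is the bookkeeping in the It\^o--Dynkin formula for the chain-dependent term: one must correctly use the identity
\[
\sum_{k=1}^D\l_{i,k}(s)\widetilde M_0(s,k) = \sum_{k\neq i}\l_{i,k}(s)[\widetilde M_0(s,k)-\widetilde M_0(s,i)],
\]
which follows from $\sum_k\l_{ik}(s)=0$, to identify the compensator of the jump martingales $d\widetilde N_k$ and separate it from the drift that is fed into the ODE. One must also check integrability (boundedness of $\widetilde M_0$ on $[0,T]$ plus $L^p$-estimates on $\F$) to ensure each $d\widetilde N_k$- and $dW$-integral is a true martingale whose expectation vanishes. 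Once this bookkeeping is settled, the two steps combine to yield both \eqref{L_0} and \eqref{3.8}.
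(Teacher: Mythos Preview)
Your proposal is correct and follows essentially the same two-step route as the paper. The only minor variation is in Step~1: you apply It\^o's formula to the vector process $\F(s;t,i)^\top Y_0^0(s;t,x,i)$, whereas the paper applies it to the scalar $\langle Y_0^0(s;t,x,i), X_0^0(s;t,x,i)\rangle$; since $X_0^0(\cd\,;t,x,i)=\F(\cd\,;t,i)x$, the two computations are equivalent (yours gives $M_0(t,i)x$ directly, the paper's gives $\langle M_0(t,i)x,x\rangle$ for all $x$). Step~2 --- letting $\widetilde M_0$ solve \eqref{3.8} and applying the It\^o--Dynkin formula to $\F^\top\widetilde M_0(s,\a(s))\F$ --- is exactly the paper's verification argument.
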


\begin{proof}
  \rm Let $\F(\cd;t,i)$ be the solution to (\ref{F}). Then it is easy to verify that
  \begin{equation*}
    X_0^0(s;t,x,i)=\F(s;t,i)x.
  \end{equation*}
  % $\F(s)^{-1}$ exists for all $s\ges0$ and the following holds:
  % %
  % $$\left\{\2n\ba{ll}
  %   %
  %   \ns\ds d\big[\F(s)^{-1}\big]=-\F(s)^{-1}\big[A(s,\a(s))-C(s,\a(s))^2\big]ds-\F(s)^{-1}C(s,\a(s))dW(s),\qq s\ges0,\\
  %   %
  %   \ns\ds\F(0)^{-1}=I, \q \a(0)=i.\ea\right.$$
  % %
  % Applying It\^o's formula to $\F(s)^{-1}X_0^0(s;t,x,i)$ leads to
  % \begin{eqnarray*}
  %   \F(s)^{-1}X_0^0(s;t,x,i)=\F(t)^{-1}X_0^{x,0}(t)=\F(t)^{-1}(t)x, \q {\rm i.e.}\q  X_0^{x,0}=\F(s)\F(t)^{-1}x.
  % \end{eqnarray*}
Applying It\^o's formula to $\lan Y_0^0(s;t,x,i),X_0^0(s;t,x,i)\ran$, we can easily obtain
\begin{align*}
  &\dbE\big[\lan G(T,\a(T))X_0^0(T;t,x,i), X_0^0(T;t,x,i)\ran\big]\\
  &=\lan \dbE [Y_0^0(t;t,x,i)], x\ran-\dbE\bigg[\int_t^TX_0^0(s;t,x,i)^\top Q(s,\a(s))X_0^0(s;t,x,i)ds\bigg].
\end{align*}
Therefore,
\begin{equation*}
  \begin{aligned}
    \lan \dbE [Y_0^0(t;t,x,i)], x\ran&=\dbE\big[\lan G(T,\a(T))X_0^0(T;t,x,i), X_0^0(T;t,x,i)\ran\big]\\
    &\qq+\dbE\bigg[\int_t^TX_0^0(s;t,x,i)^\top Q(s,\a(s))X_0^0(s;t,x,i)ds\bigg]\\
    &=\dbE\big[\lan G(T,\a(T))\F(T;t,i)x, \F(T;t,i)x\ran\big]\\
    &\qq+\dbE\bigg[\int_t^Tx^\top\F(s;t,i)^\top Q(s,\a(s))\F(s;t,i)xds\bigg]\\
    &=\dbE\big[\lan \F(T;t,i)^\top G(T,\a(T))\F(T;t,i)x, x\ran\big]\\
    &\qq+\dbE\bigg[\int_t^T\lan\F(s;t,i)^\top Q(s,\a(s))\F(s;t,i)x,x\ran ds\bigg]\\
    &=\Blan\dbE\big[\F(T;t,i)^\top G(T,\a(T))\F(T;t,i)+\int_t^T\lan\F(s;t,i)^\top Q(s,\a(s))\F(s;t,i)ds\big]x, x\Bran.\\
  \end{aligned}
\end{equation*}
Thus observing that $M_0(t,i)x=\dbE\big[Y_0^0(t;t,x,i)\big]$, we have
\begin{eqnarray*}
  M_0(t,i)=\dbE\bigg[\F(T;t,i)^\top G(T,\a(T))\F(T;t,i)+\int_t^T\F(s;t,i)^\top Q(s,\a(s))\F(s;t,i)ds\bigg].
\end{eqnarray*}
Suppose $\wt M(\cd,i)$ satisfy the ODE \eqref{3.8}. Next we shall prove that $\wt M(\cd,i)=M_0(\cd,i)$. % From the above representation of $M_0(t,i)$, it is easy to verify $M_0(T,i)=G(T,i)$.
Observing that
\begin{eqnarray*}
  d\wt M(s,\a(s))=\dot{\wt{M}}(s,\a(s))ds+\sum_{k=1}^D\big[\wt M(s,k)-\wt M(s,\a(s-))\big]d\l_k(s)+\sum_{k=1}^D\big[\wt M(s,k)-\wt M(s,\a(s-))\big]d\wt N_k(s).
\end{eqnarray*}
Thus applying the It\^o's formula to $\F(s;t,i)^\top \wt M(s,\a(s))\F(s;t,i)$ leads to
\begin{eqnarray*}
\begin{array}{rl}
  \wt M(t,i)=\5n & \dbE\bigg[\F(T;t,i)^\top G(T,\a(T))\F(T;t,i)+\int_t^T\F(s;t,i)^\top Q(s,\a(s))\F(s;t,i)ds\bigg]\\
  =\5n & M_0(t,i).
\end{array}
\end{eqnarray*}
Thus we complete our proof.
\end{proof}

\begin{prop} \sl
  The operator $M_2(\cd,i)$ defined in \ref{RP-cost} admits the following representation:
  \begin{eqnarray}
    M_2(t,i)=\h L_t^*G(T,\a(T))\h L_t+ L_t^*Q(\cd,\a(\cd))L_t + S(\cd,\a(\cd))L_t+ L_t^*S(\cd,\a(\cd))^\top +R(\cd,\a(\cd)),
  \end{eqnarray}
  where the operators
  \begin{eqnarray}
    L_t:\cU[t,T]\rightarrow L_\dbF^2(t,T;\dbR^n), \qq \h L_t:\cU[t,T]\rightarrow L_{\dbF_T}^2(\O;\dbR^n)
  \end{eqnarray}
are defined as follows:
\begin{align}
  (L_tu)(\cd)&=\F(\cd\,;t,i)\bigg\{\int_t^\cdot \F(r;t,i)^{-1}\big[B(r,\a(r))-C(r,\a(r))D(r,\a(r))\big]u(r)dr\\
             &\qq\q\qq\qq+\int_t^\cdot \F(r;t,i)^{-1}D(r,\a(r))u(r)dW(r)\bigg\},\nonumber\\
  \h L_tu&=(L_tu)(T),
\end{align}
and $L_t^*$ and $\h L_t^*$ are the adjoint operators of $L_t$ and $\h L_t$, respectively.
\end{prop}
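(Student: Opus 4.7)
The plan is to identify $M_2(t,i)$ through its quadratic form $\langle M_2(t,i) u, u\rangle$ and then read off the adjoint operators by polarization. The argument has three steps.

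First, I would check that $L_t u = X_0^u(\cd\,;t,0,i)$ and $\h L_t u = X_0^u(T;t,0,i)$ via the stochastic variation of constants formula. From the SDE for $\F(\cd\,;t,i)$ a standard computation gives
\[
d\F(s;t,i)^{-1} = \F(s;t,i)^{-1}\bigl[C(s,\a(s))^2 - A(s,\a(s))\bigr]\, ds - \F(s;t,i)^{-1} C(s,\a(s))\, dW(s).
\]
Applying It\^o's formula to $\F^{-1} X_0^u$, the $A$-drift terms cancel, the quadratic variation between $-\F^{-1} C\, dW$ and $(C X_0^u + D u)\, dW$ produces the correction $-\F^{-1} C D u\, ds$, and we obtain
\[
d\bigl(\F(s;t,i)^{-1} X_0^u(s;t,0,i)\bigr) = \F(s;t,i)^{-1}\bigl[B(s,\a(s)) - C(s,\a(s)) D(s,\a(s))\bigr] u(s)\, ds + \F(s;t,i)^{-1} D(s,\a(s)) u(s)\, dW(s).
\]
Integrating from $t$ to $s$ using $X_0^u(t;t,0,i) = 0$ and left-multiplying by $\F(s;t,i)$ reproduces the defining formula of $(L_t u)(s)$; evaluating at $s = T$ yields $\h L_t u = X_0^u(T;t,0,i)$.

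Second, specialize the representation
\[
J^0(t,x,i;u(\cd)) = \langle M_2(t,i) u, u\rangle + 2\langle M_1(t,i) x, u\rangle + \langle M_0(t,i) x, x\rangle
\]
of Proposition \ref{RP-cost} to $x = 0$, giving $\langle M_2(t,i) u, u\rangle = J^0(t,0,i;u(\cd))$. Substituting $X_0^u(\cd\,;t,0,i) = L_t u$ and $X_0^u(T;t,0,i) = \h L_t u$ into the definition of $J^0$ and moving operators to the $u$-side via the adjoints, each of the four terms in $J^0$ becomes a bounded quadratic form in $u$ on $\cU[t,T]$:
\[
\dbE\langle G(T,\a(T)) \h L_t u, \h L_t u\rangle = \langle \h L_t^* G(T,\a(T)) \h L_t u, u\rangle, \qquad \dbE\int_t^T \langle Q L_t u, L_t u\rangle ds = \langle L_t^* Q(\cd,\a(\cd)) L_t u, u\rangle,
\]
\[
2\dbE\int_t^T \langle S L_t u, u\rangle ds = \langle S(\cd,\a(\cd)) L_t u, u\rangle + \langle L_t^* S(\cd,\a(\cd))^\top u, u\rangle, \qquad \dbE\int_t^T \langle R u, u\rangle ds = \langle R(\cd,\a(\cd)) u, u\rangle,
\]
where the cross term has been symmetrized using $\langle S L_t u, u\rangle = \langle L_t u, S^\top u\rangle = \langle u, L_t^* S^\top u\rangle$. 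Summing and comparing with the target expression for $M_2(t,i)$ shows that both operators induce the same quadratic form; since both are manifestly self-adjoint on $\cU[t,T]$, polarization upgrades this to the claimed operator identity.

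The main subtlety is bookkeeping of adjoints on the correct Hilbert spaces: $L_t : \cU[t,T] \to L_\dbF^2(t,T;\dbR^n)$ and $\h L_t : \cU[t,T] \to L^2_{\cF_T}(\O;\dbR^n)$, and under (H2) the matrix-valued multipliers $G(T,\a(T))$, $Q(\cd,\a(\cd))$, $S(\cd,\a(\cd))$, $S(\cd,\a(\cd))^\top$, and $R(\cd,\a(\cd))$ act as bounded operators between the appropriate spaces; once this is settled the adjoint identities used above are automatic. Beyond this, the only genuinely substantive step is the It\^o computation in Step 1, where one must confirm that the Markov chain introduces no new correction terms in the variation of constants formula; this holds because $\F$ and $X_0^u$ are both continuous (the chain enters only through the coefficients) so the usual It\^o calculus applies without regime-switching modifications.
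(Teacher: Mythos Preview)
Your proof is correct and follows essentially the same route as the paper's. The only notable difference is economy: the paper re-derives the identity $\langle M_2(t,i)u,u\rangle = J^0(t,0,i;u(\cdot))$ by applying It\^o's formula to $\langle Y_0^u(s;t,0,i), X_0^u(s;t,0,i)\rangle$, whereas you simply quote this from Proposition~\ref{RP-cost} (setting $x=0$) and then expand the cost functional directly---a perfectly legitimate shortcut, since that It\^o computation was already carried out in the proof of Proposition~\ref{RP-cost}. Your explicit invocation of polarization and self-adjointness to pass from the quadratic-form identity to the operator identity is a point the paper leaves implicit, so if anything your argument is slightly more complete on that front.
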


\begin{proof}
  Noting that the solution $X_0^u(\cd;t,0,i)$ of \eqref{generalstate} can be written as follows:
  \begin{eqnarray}
    X_0^u(s;t,0,i)&\5n=\5n&\F(s;t,i)\bigg\{\int_t^s\F(r;t,i)^{-1}\big[B(r,\a(r))-C(r,\a(r))D(r,\a(r))\big]u(r)dr\\
                &&\qq\qq+\int_t^s\F(r;t,i)^{-1}D(r,\a(r))u(r)dW(r)\bigg\}\nonumber\\
                &\5n=\5n&(L_tu)(s).\nonumber
  \end{eqnarray}
Applying It\^o's formula to $\lan Y_0^u(s;t,0,i),X_0^u(s;t,0,i)\ran$ yields
\begin{eqnarray*}
  \lan (M_2(t,i))u,u\ran &\5n=\5n&\dbE\bigg\{\lan G(T,\a(T))X_0^u(T;t,0,i), X_0^u(T;t,0,i)\ran\\
                         &&\qq+\int_t^T\bigg[\lan Q(s,\a(s))X_0^u(s;t,0,i), X_0^u(s;t,0,i)\ran+\lan S(s,\a(s))X_0^u(s;t,0,i),u(s)\ran\\
                         &&\qq\qq\qq+\lan S(s,\a(s))^\top u(s), X_0^u(s;t,0,i)\ran+\lan R(s,\a(s))u(s),u(s)\ran \bigg]ds\bigg\}\\
                         &\5n=\5n&\dbE\big[\lan G(T,\a(T))\h L_t u, \h L_t u\ran\big]+\lan Q(\cd,\a(\cd))L_tu, L_tu\ran\\
                         &&\qq+\lan S(\cd,\a(\cd))L_tu,u\ran+\lan S(\cd,\a(\cd))^\top u, L_tu\ran+\lan R(\cd,\a(\cd))u,u\ran\\
                         &\5n=\5n&\Blan \big[\h L_t^*G(T,\a(T))\h L_t+ L_t^*Q(\cd,\a(\cd))L_t + S(\cd,\a(\cd))L_t+ L_t^*S(\cd,\a(\cd))^\top +R(\cd,\a(\cd))\big]u,u\Bran.
\end{eqnarray*}
Thus we complete the proof.%\todo{We may also give explicit expression of the operator $L_t^*$ and $\h L_t^*$}
\end{proof}

From the representation of the cost functional, we have the
following simple corollary.

\ms
\begin{coro}\label{sec:frechetdifferential}
  \sl Let {\rm(H1)--(H2)} hold and $t\in[0,T)$ be
  given. For any $x\in\dbR^n, \eps\in\dbR$ and $u(\cd),
  v(\cd)\in\cU[t,T]$, the following holds:
  \bel{u+v-1}\ba{ll}
  \ns\ds J(t,x,i;u(\cd)+\eps v(\cd))=J(t,x,i;u(\cd))+\eps^2J^0(t,0,i;v(\cd))+2\eps\dbE\int_t^T\lan \bar{M}(t,i)(x,u)(s),v(s)\ran ds,\ea\ee
  where
  \begin{eqnarray}\label{eq:barM}
    \begin{aligned}
      \bar{M}(t,i)(x,u)(s):=\,&B(s,\a(s))^\top Y^u(s;t,x,i)\1n+\1nD(s,\a(s))^\top
      Z^u(s;t,x,i)\\
      &+S(s,\a(s))X^u(s;t,x,i)\1n +\1nR(s,\a(s))u(s)\1n+\1n\rho(s,\a(s)), \q s\in[t, T].
    \end{aligned}
     \end{eqnarray}
  % $(X(\cd),Y(\cd),Z(\cd))$ is the adapted solution to the
  % following (decoupled) linear FBSDE:
  % %
  % \bel{FBSDE03}\left\{\2n\ba{ll}
  %   %
  %   \ns\ds
  %   dX(s)=\big[A(s)X(s)+B(s)u(s)+b(s)\big]ds\\
  %   %
  %   \ns\ds\qq\qq~+\big[C(s)X(s)+D(s)u(s)+\si(s)\big]dW(s),\qq
  %   s\in[t,T],\\
  %   %
  %   \ns\ds dY(s)=-\big[A(s)^\top Y(s)+C(s)^\top Z(s)+Q(s)X(s)+S(s)^\top u(s)+q(s)\big]ds\\
  %   %
  %   \ns\ds\qq\qq\qq\qq\qq\qq\qq\qq\q~+Z(s)dW(s),\qq s\in[t,T], \\
  %   %
  %   \ns\ds X(t)=x,\qq Y(T)=GX(T)+g.\ea\right.\ee
  %
  Consequently, the map $u(\cd)\mapsto J(t,x,i;u(\cd))$ is Fr\'echet
  differentiable with the Fr\'echet derivative given by
  \begin{eqnarray}
    \label{DJ}
    \cD J(t,x,i;u(\cd))(s)=2\bar{M}(t,i)(x,u)(s),\qq s\in[t,T],
  \end{eqnarray}
  and (\ref{u+v-1}) can also be written as
  \begin{eqnarray}
    \label{u+v-1*}J(t,x,i;u(\cd)+\eps
    v(\cd))=J(t,x,i;u(\cd))+\eps^2J^0(t,0,i;v(\cd))+\eps\dbE\int_t^T\lan\cD
    J(t,x,i;u(\cd))(s),v(s)\ran ds.\hspace{-0.8cm}
  \end{eqnarray}
\end{coro}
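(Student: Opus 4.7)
The plan is to exploit the linearity of the state equation \eqref{state}: by uniqueness of the strong solution,
$$X^{u+\eps v}(\cd\,;t,x,i) = X^u(\cd\,;t,x,i) + \eps\, X_0^v(\cd\,;t,0,i),$$
where $X_0^v(\cd\,;t,0,i)$ denotes the state with zero initial condition and $b=\si=0$ driven by $v(\cd)$. Substituting this into \eqref{cost} and expanding bilinearly in $\eps$ (using symmetry of $G(T,i), Q(s,i), R(s,i)$ when regrouping cross terms) produces three pieces: the order-$\eps^0$ piece is $J(t,x,i;u(\cd))$; the order-$\eps^2$ piece collects exactly the ingredients of the homogeneous cost for control $v$ with zero inhomogeneities, giving $\eps^2 J^0(t,0,i;v(\cd))$; and the order-$\eps^1$ cross term is
$$2\eps\,\dbE\bigg\{\Blan G(T,\a(T))X^u(T)+g(T,\a(T)), X_0^v(T)\Bran + \int_t^T\big[\lan QX^u+q, X_0^v\ran+\lan SX_0^v,u\ran+\lan SX^u+Ru+\rho,v\ran\big]ds\bigg\}.$$

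The key step is to apply It\^o's formula to $\lan Y^u(s;t,x,i), X_0^v(s;t,0,i)\ran$ on $[t,T]$, using the FBSDE \eqref{generalstate}. Because $X_0^v$ is a continuous semimartingale, its quadratic covariation with the pure-jump martingales $\wt N_k$ in the BSDE for $Y^u$ vanishes, so the pieces $\sum_k \G_k^u d\wt N_k$ contribute only zero-mean martingale increments. After the standard cancellations $\lan A^\top Y^u, X_0^v\ran+\lan Y^u, AX_0^v\ran$ and $\lan C^\top Z^u, X_0^v\ran+\lan Z^u, CX_0^v\ran$ in the drift, taking expectation and using $X_0^v(t)=0$ together with $Y^u(T)=G(T,\a(T))X^u(T)+g(T,\a(T))$ yields
$$\dbE\lan GX^u(T)+g, X_0^v(T)\ran+\dbE\int_t^T\big[\lan QX^u+q, X_0^v\ran+\lan SX_0^v, u\ran\big]ds = \dbE\int_t^T\lan B^\top Y^u+D^\top Z^u, v\ran ds.$$

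Plugging this identity into the cross term collapses it to $2\eps\,\dbE\int_t^T\lan B^\top Y^u+D^\top Z^u+SX^u+Ru+\rho, v\ran ds$, which is exactly $2\eps\,\dbE\int_t^T\lan \bar M(t,i)(x,u)(s), v(s)\ran ds$ by definition \eqref{eq:barM}. This establishes \eqref{u+v-1}. The Fr\'echet derivative formula \eqref{DJ} then follows at once: the order-$\eps^2$ term is $O(\eps^2)$, while the order-$\eps$ term is a continuous linear functional of $v(\cd)$ on $\cU[t,T]$ equal to $\dbE\int_t^T\lan 2\bar M(t,i)(x,u), v\ran ds$; \eqref{u+v-1*} is just a restatement of \eqref{u+v-1} in terms of $\cD J$.

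The main point of care is the It\^o computation on the mixed product $\lan Y^u, X_0^v\ran$: one must track that (a) the covariation of the continuous $X_0^v$ with the jump-martingale part of $Y^u$ vanishes, and (b) the Brownian cross-variation contributes precisely $\lan Z^u, CX_0^v+Dv\ran ds$ in the drift, which pairs with the BSDE drift to yield the $\lan B^\top Y^u+D^\top Z^u, v\ran$ piece. Once this bookkeeping is done cleanly, the identity is immediate and the remaining argument is purely algebraic.
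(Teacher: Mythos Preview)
Your proof is correct. It differs from the paper's argument in organization rather than in substance: the paper first invokes Proposition~\ref{RP-cost} to write $J(t,x,i;u)=\langle M_2 u,u\rangle+2\langle M_1 x,u\rangle+\langle M_0 x,x\rangle+2\langle\nu_t,u\rangle+2\langle y_t,x\rangle+c_t$, expands $J(t,x,i;u+\eps v)$ purely algebraically in these operators, and only at the end identifies $(M_2 u+M_1 x+\nu_t)(s)$ with $\bar M(t,i)(x,u)(s)$ via the explicit formulas for $M_1,M_2,\nu_t$ and the decomposition $X^u(\cd\,;t,x,i)=X_0^u(\cd\,;t,x,i)+X^0(\cd\,;t,0,i)$. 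You instead bypass the operator representation entirely, decompose $X^{u+\eps v}=X^u+\eps X_0^v$ at the outset, and apply a single It\^o computation to $\lan Y^u, X_0^v\ran$ to collapse the cross term. Both routes rest on the same analytic ingredient (It\^o's formula on a forward--backward product under the regime-switching filtration, with the jump martingales contributing no covariation against the continuous forward process); the paper's version is shorter here because the It\^o work has already been absorbed into Proposition~\ref{RP-cost}, while yours is self-contained and would stand even without that proposition.
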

% \bf Corollary 3.2.
% \ms

\begin{proof}
  \rm From Proposition \ref{RP-cost}, we have
  $$\ba{ll}
  \ns\ds J(t,x,i;u(\cd)+\eps v(\cd))\\
  \ns\ds=\lan M_2(t,i)(u+\eps v),u+\eps v\ran+2\lan M_1(t,i)x,u+\eps v\ran+\lan M_0(t,i)x,x\ran
  +2\lan \nu_t, u+\eps v\ran+2\lan y_t, x\ran+c_t\\
  \ns\ds=\lan M_2(t,i)u,u\ran+2\eps\lan M_2(t,i)u,v\ran+\eps^2\lan
  M_2(t,i)v,v\ran+2\lan M_1(t,i)x,u\ran
  +2\eps\lan M_1(t,i)x,v\ran+\lan M_0(t,i)x,x\ran\\
  \ns\ds\q~+2\lan \nu_t, u\ran+2\eps\lan \nu_t, v\ran+2\lan y_t, x\ran+c_t\\
  \ns\ds=J(t,x,i;u(\cd))+\eps^2J^0(t,0;v(\cd))+2\eps\lan M_2(t,i)u+M_1(t,i)x+\nu_t,v\ran.\ea$$
  From the representation of $M_1(t,i)$, $M_2(t,i)$ and $\nu_t$ in Proposition \ref{RP-cost} and the fact
  \begin{equation*}
    X^u(\cd\,;t,x,i)=X_0^u(\cd\,;t,x,i)+X^0(\cd\,;t,0,i),
  \end{equation*}
  we see that
  \begin{eqnarray*}
    (M_2(t,i)u)(s)+(M_1(t,i)x)(s)+\nu_t(s)&\5n=\5n&B(s,\a(s))^\top Y^u(s;t,x,i)+D(s,\a(s))^\top Z^u(s;t,x,i)\\
                                          &&+S(s,\a(s))X^u(s;t,x,i)+R(s,\a(s))u(s)+\rho(s,\a(s))\\
                                          &\5n=\5n&\bar{M}(t,i)(x,u)(s),\q s\in[t,T].
  \end{eqnarray*}
\end{proof}

%\section{Equivalences for open-loop and  closed-loop solvabilities}
\section{Open-loop Solvabilities}
We first present the equivalence between the open-loop solvability and the corresponding forward-backward differential equation system.

% between the open-loop (closed-loop) solvability and the corresponding forward-backward differential equation system (the existence of a regular solution to the Riccati equation)

\begin{thm} \sl
Let {\rm(H1)--(H2)} hold and $(t,x,i)\in [t,T]\times \dbR^n\times \cS$ be given. An element $u(\cd)\in\cU[t,T]$ is an open-loop
  optimal control of Problem {\rm(M-SLQ)} if and only if $J^0(t,0,i;v(\cd))\ge 0, \forall v(\cd)\in \cU[t,T]$ and the following stationary condition hold:
  \begin{equation}
    \label{J_u=0}
    \begin{aligned}
      &B(s,\a(s))^\top Y^{u}(s;t,x,i)+D(s,\a(s))^\top
      Z^{u}(s;t,x,i)\\
      &+S(s,\a(s))X^{u}(s;t,x,i)+R(s,\a(s))u(s)+\rho(s,\a(s))=0,\qq  s\in[t,T],
    \end{aligned}
  \end{equation}
  where $(X^{u}(\cd\,;t,x,i), Y^{u}(\cd\,;t,x,i), Z^{u}(\cd\,;t,x,i))$ is the adapted solution to the FBSDE \eqref{generalstate}.
\end{thm}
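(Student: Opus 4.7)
The entire argument rests on the perturbation identity established in Corollary \ref{sec:frechetdifferential}, namely
\begin{equation*}
J(t,x,i;u(\cd)+\eps v(\cd))=J(t,x,i;u(\cd))+\eps^2J^0(t,0,i;v(\cd))+2\eps\,\dbE\int_t^T\lan\bar M(t,i)(x,u)(s),v(s)\ran ds,
\end{equation*}
valid for every $v(\cd)\in\cU[t,T]$ and every $\eps\in\dbR$, where $\bar M(t,i)(x,u)(s)$ is precisely the left-hand side of the stationary condition \eqref{J_u=0}. Thus the plan is to extract both directions of the equivalence from this single quadratic-in-$\eps$ identity.

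\textbf{Sufficiency.} Suppose $u(\cd)$ satisfies \eqref{J_u=0} and $J^0(t,0,i;v(\cd))\ges 0$ for every $v(\cd)\in\cU[t,T]$. Then $\bar M(t,i)(x,u)(s)=0$ a.e.~on $[t,T]$, so the cross term in the perturbation identity vanishes, and setting $\eps=1$ yields
\begin{equation*}
J(t,x,i;u(\cd)+v(\cd))=J(t,x,i;u(\cd))+J^0(t,0,i;v(\cd))\ges J(t,x,i;u(\cd)).
\end{equation*}
Since $v(\cd)$ ranges over $\cU[t,T]$, so does $u(\cd)+v(\cd)$, giving open-loop optimality of $u(\cd)$ for $(t,x,i)$.

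\textbf{Necessity.} Conversely, assume $u(\cd)$ is open-loop optimal for $(t,x,i)$. Fix an arbitrary $v(\cd)\in\cU[t,T]$; the scalar function
$$\f(\eps)\deq J(t,x,i;u(\cd)+\eps v(\cd))=J(t,x,i;u(\cd))+\eps^2J^0(t,0,i;v(\cd))+2\eps\,\dbE\int_t^T\lan\bar M(t,i)(x,u)(s),v(s)\ran ds$$
attains its minimum at $\eps=0$. The first-order condition $\f'(0)=0$ reads
$$\dbE\int_t^T\lan\bar M(t,i)(x,u)(s),v(s)\ran ds=0\qq\forall v(\cd)\in\cU[t,T],$$
and choosing $v(\cd)=\bar M(t,i)(x,u)(\cd)$ (which lies in $\cU[t,T]$ under (H1)--(H2)) forces $\bar M(t,i)(x,u)(\cd)\equiv 0$, i.e. \eqref{J_u=0} holds. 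With the cross term eliminated, $\f(\eps)=J(t,x,i;u(\cd))+\eps^2 J^0(t,0,i;v(\cd))\ges J(t,x,i;u(\cd))$ for all $\eps$ gives $J^0(t,0,i;v(\cd))\ges 0$, and since $v(\cd)$ was arbitrary the convexity condition follows.

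\textbf{Where the work really is.} There is no genuine obstacle here: once Corollary \ref{sec:frechetdifferential} is in hand the argument reduces to inspecting a quadratic polynomial in $\eps$, and the only detail to check carefully is that the candidate perturbation $v(\cd)=\bar M(t,i)(x,u)(\cd)$ is admissible, which follows from the integrability assumptions on $B,D,S,R,\rho$ in (H1)--(H2) together with the standard a priori estimates for the FBSDE \eqref{generalstate}. The essential work for this theorem was therefore carried out already in deriving the representation of the cost functional in Proposition \ref{RP-cost} and its Fr\'echet-derivative consequence.
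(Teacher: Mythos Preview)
Your proof is correct and follows essentially the same route as the paper: both invoke the perturbation identity from Corollary \ref{sec:frechetdifferential} and read off the two conditions from the resulting quadratic in $\eps$. Your version is in fact a bit more explicit than the paper's---you separate the sufficiency and necessity directions and check that $\bar M(t,i)(x,u)(\cd)\in\cU[t,T]$ so that the choice $v(\cd)=\bar M(t,i)(x,u)(\cd)$ is legitimate---whereas the paper simply asserts that \eqref{eq:oloopopticondi} is equivalent to $J^0(t,0,i;v(\cd))\ges0$ together with $\bar M(t,i)(x,u)(\cd)=0$.
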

\begin{proof}
  By definition, $u(\cd)$ is an open-loop optimal control if and only if  the following hold:
  \begin{equation}
    \label{eq:oloopopticondi}
    J(t,x,i; u(\cd)+\eps v(\cd))-J(t,x,i;u)\ges 0,\q  \forall v(\cd)\in \cU[t,T].
  \end{equation}
While from Corollary \ref{sec:frechetdifferential}, we have
\begin{equation*}
  \begin{aligned}
    J(t,x,i; u(\cd)+\eps v(\cd))- J(t,x,i;u)=\eps^2J^0(t,0,i;v(\cd))+2\eps\dbE\int_t^T\lan \bar{M}(t,i)(x,u)(s),v(s)\ran ds.
  \end{aligned}
\end{equation*}
Therefore, \eqref{eq:oloopopticondi} holds if and only if $J^0(t,0,i;v(\cd))\ges 0, \forall v(\cd)\in \cU[t,T]$ and $\bar{M}(t,i)(x,u)(s)=0, s\in[t,T]$.
Note the definition of $\bar{M}$ in \eqref{eq:barM} and so the proof is completed.\end{proof}
\begin{rmk}
  Note that if $u(\cd)$ happens to be an open-loop optimal control of
  Problem (M-SLQ), then the  {\it stationarity condition} \eqref{J_u=0}
  holds,
  % %
  % \begin{eqnarray}
  %   \label{J_u=0}
  %   \cD J(t,x;u(\cd))&=&2\big[B(s,\a(s))^\top Y^u(s;t,x,i)+D(s,\a(s))^\top
  %                        Z^u(s;t,x,i)\\
  %                    &&\q+S(s,\a(s))X^u(s;t,x,i)+R(s,\a(s))u(s)+\rho(s,\a(s))\big]=0,\qq s\in[t,T],\nonumber
  % \end{eqnarray}
  % %
  which brings a coupling into the FBSDE \eqref{generalstate}. We call
  \eqref{generalstate}, together with the stationarity condition
  (\ref{J_u=0}), the {\it optimality system} for the open-loop optimal
  control of Problem (M-SLQ).
\end{rmk}

Next we shall investigate the relationships between open-loop solvability and  uniform convexity of the cost functional. We first introduce the definition of uniform convexity, which is from Zalinescu \cite[page 203]{Zalinescu2002cagvs} or \cite{Zalinescu1983ucf}.

\begin{defn}
  For a general normed space $(\dbH, \lVert\cd\rVert )$, the function $f:(\dbH, \lVert\cd\rVert)\mapsto \cl{\dbR}$ is said to be uniformly convex if there exists $h:\dbR_+\mapsto \cl{\dbR}_+$ with $h(t)>0$ for $t>0$ and $h(0)=0$ such that
  \begin{align*}
    f(\eps x+(1-\l)y)\les \eps f(x)+(1-\eps)f(y)-\eps(1-\eps)h(\rVert x-y\rVert),\  \forall x, y\in \mbox{\rm dom} f, \, \eps\in [0,1].
  \end{align*}
  % for all $x,y\in\mbox{\rm dom } f$ and $\l\in [0,1]$
  % where $h$ there exists $h:\dbR_+\mapsto \cl{\dbR}_+$ with $h(0)=0$ such that
\end{defn}

\begin{prop} \sl
  The cost functional $J(t,x,i;u(\cd))$ \todo{How to obtain the uniform convexity of $J(t,x,i,u(\cd))$ from the uniform convexity  of $J^0(t,0,i;u(\cd))$}
  is uniformly convex if and only if $M_2(t,i)\ges\eps I$ for some $\eps>0$, which is also equivalent to
  \begin{align}
   \label{J>l}
    J^0(t,0,i;u(\cd)) \ges \eps \dbE\int_t^T|u(s)|^2ds,\qq\forall u(\cd)\in\cU[t,T],
  \end{align}
  for some $\eps>0$.
\end{prop}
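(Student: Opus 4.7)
The plan is to exploit the explicit quadratic form of $J(t,x,i;u(\cd))$ in the control variable furnished by Proposition~\ref{RP-cost}. With $(t,x,i)$ held fixed, regrouping the terms of \eqref{J-rep1} gives
\begin{equation*}
  J(t,x,i;u(\cd))=\langle M_2(t,i)u,u\rangle+2\langle M_1(t,i)x+\nu_t,u\rangle+(\hb{terms not involving }u),
\end{equation*}
while $J^0(t,0,i;u(\cd))=\langle M_2(t,i)u,u\rangle$ directly. Since $\cU[t,T]=L^2_\dbF(t,T;\dbR^m)$ is a Hilbert space and $M_2(t,i)$ is self-adjoint on it, the equivalence between $M_2(t,i)\ges\eps I$ and the lower bound \eqref{J>l} is simply the spectral characterisation of coercivity for a bounded self-adjoint operator.

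The bridge to uniform convexity is the algebraic identity
\begin{equation*}
  \l J(t,x,i;u(\cd))+(1-\l)J(t,x,i;v(\cd))-J(t,x,i;\l u(\cd)+(1-\l)v(\cd))=\l(1-\l)\,J^0(t,0,i;u(\cd)-v(\cd)),
\end{equation*}
valid for every $\l\in[0,1]$ and $u(\cd),v(\cd)\in\cU[t,T]$, obtained by direct expansion of \eqref{J-rep1}: the affine-in-$u$ and constant-in-$u$ contributions cancel, leaving exactly $\l(1-\l)\langle M_2(t,i)(u-v),u-v\rangle$. This identity shows at a glance that uniform convexity of $u\mapsto J(t,x,i;u(\cd))$ is insensitive to the initial state $x$ and is completely controlled by $J^0(t,0,i;\cd)$, which is precisely what the TODO note in the statement is asking for. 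The ``$\Leftarrow$'' direction is then immediate: if $M_2(t,i)\ges\eps I$, the right-hand side dominates $\l(1-\l)\eps\dbE\int_t^T|u(s)-v(s)|^2 ds$, yielding uniform convexity with modulus $h(r)=\eps r^2$.

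For ``$\Rightarrow$'', suppose $J(t,x,i;\cd)$ is uniformly convex with some modulus $h$. Setting $v(\cd)\equiv 0$ in the identity and dividing by $\l(1-\l)$ produces $\langle M_2(t,i)u,u\rangle\ges h(\|u\|_{\cU[t,T]})$ for every $u(\cd)\in\cU[t,T]$; evaluating on the unit sphere then gives $\langle M_2(t,i)u,u\rangle\ges h(1)>0$, whence $M_2(t,i)\ges h(1)I$. The only mildly subtle point is this last step: because $h$ is a general modulus rather than quadratic, one must invoke the unit-sphere bound rather than argue by scale invariance. Everything else is routine manipulation of quadratic forms.
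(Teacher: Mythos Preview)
Your proof is correct and follows essentially the same route as the paper: both derive the convexity identity
\[
\l J(t,x,i;u)+(1-\l)J(t,x,i;v)-J(t,x,i;\l u+(1-\l)v)=\l(1-\l)\langle M_2(t,i)(u-v),u-v\rangle
\]
from the representation \eqref{J-rep1}, and then read off the equivalence with $M_2(t,i)\ges\eps I$ and with \eqref{J>l}. Your treatment of the ``$\Rightarrow$'' direction is in fact slightly more careful than the paper's, which simply asserts that $\langle M_2(t,i)w,w\rangle\ges h(\|w\|)$ is equivalent to $M_2(t,i)\ges\eps I$ without spelling out the unit-sphere/homogeneity step.
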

\begin{proof}
  From Proposition \ref{RP-cost}, we can see that for any $u(\cd), v(\cd)\in \cU[t,T]$ and $\eps\in[0,1]$,
  \begin{equation*}
    \begin{aligned}
      &J(t,x,i;\eps u(\cd)+(1-\eps)v(\cd))\\
      &=\langle M_2(t,i)(\eps u+(1-\eps)v,\eps u+(1-\eps)v\rangle+2\langle M_1(t,i)x,\eps u+(1-\eps)v\rangle\\
      &\q+\langle M_0(t,i)x,x\rangle +2\langle \nu_t, \eps u+(1-\eps)v\rangle+2\langle y_t, x\rangle+c_t\\
      &=\eps \big[\langle M_2(t,i)u,u\rangle+2\langle M_1(t,i)x,u\rangle+\langle M_0(t,i)x,x\rangle
      +2\langle \nu_t, u\rangle+2\langle y_t, x\rangle+c_t\big]\\
      &\q+(1-\eps)\big[\langle M_2(t,i)v,v\rangle+2\langle M_1(t,i)x,v\rangle+\langle M_0(t,i)x,x\rangle
      +2\langle \nu_t, v\rangle+2\langle y_t, x\rangle+c_t\big]\\
      &\q-\eps(1-\eps)\lan M_2(t,i)(u-v),u-v\ran.
    \end{aligned}
  \end{equation*}
  Thus from the definition of uniformly convex, the cost functional $J(t,x,i;u(\cd))$ is uniformly convex if and only if there exists $h:\dbR_+\mapsto \cl{\dbR}_+$ with $h(t)>0$ for $t>0$ and $h(0)=0$ such that
  \begin{equation*}
    \lan M_2(t,i)(u-v),u-v\ran\ges h(\lVert u-v\rVert),
  \end{equation*}
  which equivalent to $M_2(t,i)\ges \eps I$ for some $\eps>0$. From Proposition \ref{RP-cost}, we have
  \begin{align*}
    J^0(t,0,i;u(\cd))&=\langle M_2(t,i)u,u\rangle.
  \end{align*}
Therefore, $M_2(t,i)>\eps I$ for some $\eps>0$ if and only if
\begin{align*}
  J^0(t,0,i;u(\cd)) \ges \eps \dbE\int_t^T|u(s)|^2ds,\qq\forall u(\cd)\in\cU[t,T].
\end{align*}
Thus the proof is completed.
\end{proof}
\begin{rmk}
  From the definition of uniform convexity, one can easily verify that $J^0(t,x,i;u(\cd))$ is uniformly convex if and only if \eqref{J>l} is satisfied. So the uniform convexity of $J(t,x,i;u(\cd))$ is equivalent to the uniform convexity of $J^0(t,x,i;u(\cd))$.
\end{rmk}

It is obvious that if the following standard
  conditions
  \begin{eqnarray}
    \label{classical}G(T,i)\ges0,\q R(s,i)\ges\d I,\q Q(s,i)-S(s,i)^\top
    R(s,i)^{-1}S(s,i)\ges0,\q i\in \cS, \q \ae~s\in[0,T],
  \end{eqnarray}
  hold for some $\d>0$, then
  \begin{eqnarray*}
    M_2(t,i)&=&\h L_t^*G(T,\a(T))\h L_t+L_t^*\big[Q(\cd,\a(\cd))-S(\cd,\a(\cd))^\top R(\cd,\a(\cd))^{-1}S(\cd,\a(\cd))\big]L_t\\
            &&+\big[L_t^*S(\cd,\a(\cd))^\top
               R(\cd,\a(\cd))^{-{1\over2}}+R(\cd,\a(\cd))^{1\over2}\big]\big[R(\cd,\a(\cd))^{-{1\over2}}S(\cd,\a(\cd)L_t+R(\cd,\a(\cd))^{1\over2}\big]\\
            &\ges& 0,
  \end{eqnarray*}
  which means that the functional $u(\cd)\mapsto J^0(t,0,i;u(\cd))$ is
  convex. In fact, one actually has the uniform convexity of the cost functional $J^0(t,0,i;u(\cd))$ under standard conditions (\ref{classical}).  We  first present a lemma for proving the uniform convexity of $J^0(t,x,i;u(\cd))$.
  \begin{lem}
    \label{uniformconvex}
    \sl For any $u(\cd)\in\cU[t,T]$, let $X_0^u(\cd\,;t,0,i)$
    be the solution of \eqref{generalstate} with $x=0, b(\cd,\cd)=\si(\cd,\cd)=0.$
    %
    % \bel{}\left\{\2n\ba{ll}
    %   %
    %   \ns\ds dX^{(u)}(s)=\big[A(s)X^{(u)}(s)+B(s)u(s)\big]ds+\big[C(s)X^{(u)}(s)+D(s)u(s)\big]dW(s),\qq s\in[t,T], \\
    %   %
    %   \ns\ds X^{(u)}(t)=0.\ea\right.\ee
    % %
    Then for any $\Th(\cd,i)\in L^2(t,T;\dbR^{m\times n}), i\in\cS$, there exists a constant $\gamma>0$ such that
    \bel{lem-2.6}\dbE\int_t^T\big|u(s)-\Th(s) X_0^u(s;t,0,i)\big|^2ds
    \ges\g\dbE\int_t^T|u(s)|^2ds,\qq\forall u(\cd)\in\cU[t,T].\ee
  \end{lem}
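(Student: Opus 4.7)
The plan is to view the map $\Phi:u(\cd)\mapsto v(\cd):=u(\cd)-\Th(\cd,\a(\cd))X_0^u(\cd;t,0,i)$ as a bounded linear operator on $L^2_\dbF(t,T;\dbR^m)$ and show that it has a bounded inverse; the inequality (\ref{lem-2.6}) is then exactly the coercivity estimate $\|\Phi u\|^2\ges\g\|u\|^2$. The key observation is that once $v$ is prescribed, writing $u=v+\Th(\cd,\a(\cd))X_0^u$ and substituting into the state equation for $X_0^u$ (with $x=0$, $b=\si=0$) yields the closed-loop SDE
\[
dX_0^u=\big[(A+B\Th)X_0^u+Bv\big]ds+\big[(C+D\Th)X_0^u+Dv\big]dW,\q X_0^u(t)=0.
\]
Thus $X_0^u$ is completely determined by $v$ as the unique strong solution of a linear SDE driven by $v$, so $\Phi$ is invertible at the level of processes and it remains to control $u$ in terms of $v$.

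Next, I would apply the standard $L^2$ estimate for such linear SDEs. Under (H1), $A(\cd,i)\in L^1$, $C(\cd,i)\in L^2$, $B(\cd,i)\in L^2$, $D(\cd,i)\in L^\i$, and together with $\Th(\cd,i)\in L^2$ the Cauchy-Schwarz inequality gives $B\Th\in L^1$ and $D\Th\in L^2$; since $\cS$ is finite, the coefficients $A+B\Th$ and $C+D\Th$ of the closed-loop SDE satisfy the integrability hypotheses needed for the standard a priori estimate. Combining the integral form of Gronwall's inequality (to absorb the $L^1$ drift) with the Burkholder-Davis-Gundy inequality (to control the $L^2$ diffusion) produces a constant $K>0$, depending only on the norms of $A,B,C,D$ and on $\|\Th\|_{L^2}$, such that
\[
\dbE\Big[\sup_{s\in[t,T]}|X_0^u(s)|^2\Big]\les K\,\dbE\int_t^T|v(s)|^2\,ds.
\]
Setting $C_\Th:=\sum_{i\in\cS}\int_t^T|\Th(s,i)|^2ds<\i$ (possible because $\cS$ is finite), one then estimates
\[
\dbE\int_t^T|\Th(s,\a(s))X_0^u(s)|^2\,ds\les C_\Th\,\dbE\Big[\sup_{s\in[t,T]}|X_0^u(s)|^2\Big]\les C_\Th K\,\dbE\int_t^T|v(s)|^2\,ds.
\]

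Finally, rewriting $u=v+\Th(\cd,\a(\cd))X_0^u$ and applying $(a+b)^2\les 2a^2+2b^2$ gives
\[
\dbE\int_t^T|u(s)|^2\,ds\les 2\big(1+C_\Th K\big)\,\dbE\int_t^T|v(s)|^2\,ds=2\big(1+C_\Th K\big)\,\dbE\int_t^T|u(s)-\Th(s,\a(s))X_0^u(s)|^2\,ds,
\]
which is precisely (\ref{lem-2.6}) with $\g:=\big[2(1+C_\Th K)\big]^{-1}$. No optimality or closed-loop information is used; this is a purely linear a priori bound on $\Phi^{-1}$. The only mildly delicate point is the low time regularity of the drift and diffusion coefficients, in particular $A\in L^1$ and $C\in L^2$ rather than $L^\i$, but this is exactly the setting covered by the classical linear-SDE estimates, so the main obstacle reduces to bookkeeping the dependence of $K$ on $\|\Th\|_{L^2}$, which is routine.
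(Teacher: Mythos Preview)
Your proof is correct and is essentially the same argument as Lemma~2.3 of Sun et al.\ \cite{Sun2016olcls}, which is exactly what the paper invokes (the paper omits the proof and simply refers to that lemma). The only adaptation needed for the regime-switching setting---bounding $\Th(\cd,\a(\cd))$ in $L^2$ via the finite sum $\sum_{i\in\cS}\int_t^T|\Th(s,i)|^2ds$---you have handled correctly.
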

  % \ms
  \begin{proof}
The proof is similar to Lemma 2.3 of Sun et al. \cite{Sun2016olcls} and so we omit it here.
  \end{proof}
  % \begin{proof}
  %   \rm Let $\Th(\cd,i)\in L^2(t,T;\dbR^{m\times n}), i\in\cS$. Define a bounded linear operator $\mathfrak{L}:\cU[t,T]\to\cU[t,T]$ by
  %   %
  %   $$\mathfrak{L}u(\cd)=u(\cd)-\Th(\cd) X_0^u(\cd\,;t,0,i).$$
  %   % $$\mathfrak{L}u(\cd)=u(\cd)-\Th(\cd) X^{0,u}(\cd).$$
  %   %
  %   Then $\mathfrak{L}$ is bijective and its inverse $\mathfrak{L}^{-1}$ is given by
  %   %
  %   $$\mathfrak{L}^{-1}u(\cd)=u(\cd)+\Th(\cd)\wt X_0^u(\cd\,;t,0,i),$$
  %   %
  %   where $\wt X_0^u(\cd\,;t,0,i)$ is the solution of
  %   %
  %   \begin{equation*}
  %     \left\{\begin{aligned}
  %         d\wt X_0^u(s;t,0,i)=&\Big\{\big[A(s,\a(s))+B(s,\a(s))\Th(s)\big]\wt X_0^u(s;t,0,i)+B(s,\a(s))u(s)\Big\}ds\\
  %         &+\Big\{\big[C(s,\a(s))+D(s,\a(s))\Th(s)\big]\wt X_0^u(s;t,0,i)+D(s,\a(s))u(s)\Big\}dW(s),\q s\in[t,T], \\
  %         \wt X_0^u(t;t,0,i)=&0,\q\a(t)=i.      %
  %       \end{aligned}\right.
  %   \end{equation*}
  %   By the bounded inverse theorem, $\mathfrak{L}^{-1}$ is bounded with
  %   norm $\|\mathfrak{L}^{-1}\|>0$. Thus,
  %   %
  %   $$\ba{ll}
  %   %
  %   \ns\ds\dbE\int_t^T|u(s)|^2ds=\dbE\int_t^T|(\mathfrak{L}^{-1}\mathfrak{L}u)(s)|^2ds
  %   \les\|\mathfrak{L}^{-1}\|\dbE\int_t^T|(\mathfrak{L}u)(s)|^2ds\\
  %   %
  %   \ns\ds\qq\qq\qq~~\1n=\|\mathfrak{L}^{-1}\|\dbE\int_t^T\big|u(s)-\Th(s) X_0^u(s;t,0,i)\big|^2ds,
  %   \qq\forall u(\cd)\in\cU[t,T],\ea$$
  %   %
  %   which implies (\ref{lem-2.6}) with $\g=\|\mathfrak{L}^{-1}\|^{-1}$.
  % \end{proof}

\begin{prop}
  \sl Let {\rm(H1)--(H2)} and {\rm(\ref{classical})} hold. Then for any $(t,i)\in[0,T)\times\cS$, the
  map $u(\cd)\mapsto J^0(t,0,i;u(\cd))$ is uniformly convex.
\end{prop}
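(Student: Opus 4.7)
The plan is to combine a completion-of-squares identity on the cost with the quantitative estimate supplied by Lemma \ref{uniformconvex}. Write $X(\cdot)=X_0^u(\cdot\,;t,0,i)$ for brevity and recall from Proposition \ref{RP-cost} (or directly from the definition of $J^0$) that
\[
J^0(t,0,i;u(\cdot)) = \mathbb{E}\bigg[\langle G(T,\alpha(T))X(T),X(T)\rangle + \int_t^T \Big(\langle QX,X\rangle + 2\langle SX,u\rangle + \langle Ru,u\rangle\Big)\,ds\bigg],
\]
where the coefficients are evaluated at $(s,\alpha(s))$. Under (\ref{classical}) the matrix $R(s,\alpha(s))$ is invertible with $R(s,\alpha(s))^{-1}\le\delta^{-1}I$, so I can complete the square in $u$ on the integrand:
\[
\langle Ru,u\rangle + 2\langle SX,u\rangle + \langle QX,X\rangle = \langle R(u+R^{-1}SX),\,u+R^{-1}SX\rangle + \langle (Q-S^\top R^{-1}S)X,X\rangle.
\]

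Next I would exploit the standard conditions term by term. Since $G(T,i)\ge 0$ the terminal term is non-negative; since $Q-S^\top R^{-1}S\ge 0$ the second bracket above is non-negative; and since $R\ge\delta I$ we get the pointwise lower bound $\langle R(u+R^{-1}SX),u+R^{-1}SX\rangle \ge \delta|u+R^{-1}SX|^2$. Dropping the two non-negative pieces yields
\[
J^0(t,0,i;u(\cdot)) \;\ge\; \delta\,\mathbb{E}\int_t^T \big|u(s) + R(s,\alpha(s))^{-1}S(s,\alpha(s))X(s)\big|^2 ds.
\]

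Finally I would set $\Theta(s,i):= -R(s,i)^{-1}S(s,i)$; the bound $R(\cdot,i)^{-1}\le\delta^{-1}I$ combined with $S(\cdot,i)\in L^2(0,T;\mathbb{R}^{m\times n})$ gives $\Theta(\cdot,i)\in L^2(t,T;\mathbb{R}^{m\times n})$ for each $i\in\cS$, so Lemma \ref{uniformconvex} applies and produces a constant $\gamma>0$ with
\[
\mathbb{E}\int_t^T \big|u(s)-\Theta(s,\alpha(s))X(s)\big|^2 ds \;\ge\; \gamma\,\mathbb{E}\int_t^T |u(s)|^2 ds.
\]
Combining this with the previous display gives $J^0(t,0,i;u(\cdot))\ge \delta\gamma\,\mathbb{E}\int_t^T|u(s)|^2ds$, which is exactly the uniform-convexity estimate (\ref{J>l}) with $\varepsilon=\delta\gamma$. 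The previous proposition then upgrades this to uniform convexity of $J^0$ as a functional on $\cU[t,T]$.

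The only non-routine ingredient is the verification that $\Theta$ qualifies as an admissible feedback for Lemma \ref{uniformconvex}; this is immediate from the uniform lower bound on $R$ and the $L^2$-integrability of $S$. Everything else is algebraic: completion of squares, and discarding non-negative terms. No further technicalities appear because the regime-switching dependence of $R,S,Q,G$ on $\alpha(\cdot)$ is absorbed into the hypothesis of the lemma, which already handles coefficients depending on the chain.
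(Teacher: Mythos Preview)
Your proof is correct and follows essentially the same approach as the paper: complete the square in $u$, discard the non-negative terminal and $(Q-S^\top R^{-1}S)$ terms, use $R\ge\delta I$, and then invoke Lemma \ref{uniformconvex} with $\Theta=-R^{-1}S$. The paper presents the steps in the same order with the same choices, so there is nothing to add.
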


\begin{proof}
  By Lemma \ref{uniformconvex} (taking $\Th(\cd)=-R(\cd,\cd)^{-1}S(\cd,\cd)$), we have
  \begin{eqnarray*}
    J^0(t,0,i;u(\cd))
    &=&\dbE\bigg\{\langle G(T,\a(T))X_0^u(T;t,0,i),X_0^u(T;t,0,i)\rangle\\
    &&\qq+\int_t^T\[\Blan Q(s,\a(s))X_0^u(s;t,0,i),X_0^u(s;t,0,i)\Bran  \\
    &&\qq\qq\qq+2\Blan S(s,\a(s))X_0^u(s;t,0,i),u(s)\Bran+\Blan R(s,\a(s))u(s),u(s)\Bran\]ds\bigg\}\\
    &\ges&\dbE\int_t^T\[\Blan Q(s,\a(s))X_0^u(s;t,0,i),X_0^u(s;t,0,i)\Bran\\
    &&\qq\qq+2\Blan S(s,\a(s))X_0^u(s;t,0,i),u(s)\Bran+\Blan R(s,\a(s))u(s),u(s)\Bran\]ds\\
    &=&\dbE\int_t^T\[\Blan\big[Q(s,\a(s))\1n-\1nS(s,\a(s))^\top R(s,\a(s))^{-1}S(s,\a(s))\big]X_0^u(s;t,0,i),X_0^u(s;t,0,i)\Bran\1n\\
    &&\qq\qq+\Blan R(s,\a(s))\big[u(s)\1n+\1nR(s,\a(s))^{-1}S(s,\a(s))X_0^u(s;t,0,i)\big],\1n\\
    &&\qq\qq\qq\qq\qq\qq~u(s)+\1nR(s,\a(s))^{-1}S(s,\a(s))X_0^u(s;t,0,i)\Bran\]ds\\
    &\ges&\d\dbE\int_t^T\big|u(s)+R(s,\a(s))^{-1}S(s,\a(s))X_0^u(s;t,0,i)\big|^2
           ds\\
    &\ges&\d\g\dbE\int_t^T|u(s)|^2 ds,\q \forall
           u(\cd)\in\cU[t,T],
  \end{eqnarray*}
  for some $\g>0$. This completes the proof.
\end{proof}

Next, we shall show that the uniform convexity of $J^0(t,x,i;u(\cd)$ implies the open-loop solvability of Problem (M-SLQ).

\begin{thm}
  \label{sec:valueuniformconvex} \sl Let {\rm(H1)--(H2)} hold. Suppose the map
  $u(\cd)\mapsto J^0(t,0,i;u(\cd))$ is uniformly convex. Then Problem
  {\rm(M-SLQ)} is uniquely open-loop solvable, and there exists a
  constant $\g\in\dbR$ such that
  \bel{uni-convex-prop0}V^0(t,x,i)\ges\g|x|^2,\qq\forall
  (t,x)\in[0,T]\times\dbR^n.\ee
  Note that in the above, the constant $\g$ does not have to be
  nonnegative.
\end{thm}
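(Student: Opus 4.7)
The strategy is to exploit the quadratic-form representation of Proposition \ref{RP-cost} to recast Problem (M-SLQ) as the minimization of a coercive quadratic functional on the Hilbert space $\cU[t,T]$. By the preceding proposition, the assumed uniform convexity of $J^0(t,0,i;\cd)$ is equivalent to the operator inequality $M_2(t,i)\ges\eps I$ for some $\eps>0$; in particular $M_2(t,i)$ is a bounded, self-adjoint, coercive operator on $\cU[t,T]$, hence invertible with $\lVert M_2(t,i)^{-1}\rVert\les 1/\eps$.

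For unique open-loop solvability, I would rewrite the cost functional from Proposition \ref{RP-cost} as
$$J(t,x,i;u(\cd))=\lan M_2(t,i)u,u\ran+2\lan M_1(t,i)x+\nu_t,u\ran+\lan M_0(t,i)x,x\ran+2\lan y_t,x\ran+c_t,$$
which is continuous, strictly convex and coercive in $u\in\cU[t,T]$. Existence and uniqueness of the minimizer then follow from the direct method of the calculus of variations, or equivalently from completing the square in $M_2$; the explicit minimizer is $u^*(\cd)=-M_2(t,i)^{-1}\big(M_1(t,i)x+\nu_t\big)\in\cU[t,T]$, and it is the unique open-loop optimal control by the characterization given in Theorem 4.1.

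For the lower bound on $V^0$, I would complete the square in the homogeneous representation $J^0(t,x,i;u)=\lan M_2(t,i)u,u\ran+2\lan M_1(t,i)x,u\ran+\lan M_0(t,i)x,x\ran$ to obtain
$$V^0(t,x,i)=\Blan\big[M_0(t,i)-M_1(t,i)^*M_2(t,i)^{-1}M_1(t,i)\big]x,x\Bran,$$
a quadratic form in $x$. A pointwise lower bound $\g(t,i)|x|^2$ is then immediate, and it remains only to produce a single constant $\g\in\dbR$ valid on all of $[0,T]\times\dbR^n\times\cS$.

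The main obstacle is exactly this uniformity. Since $\cS$ is finite it suffices to bound, uniformly in $t\in[0,T]$ and $i\in\cS$, the operator norm of $M_0(t,i)-M_1(t,i)^*M_2(t,i)^{-1}M_1(t,i)$. Here $M_0(t,i)$ is uniformly bounded by the Feynman--Kac representation of Proposition \ref{sec:F-K} combined with standard $L^2$-moment estimates for the SDE (\ref{F}) under (H1); the operator norm of $M_1(t,i):\dbR^n\to L_\dbF^2(t,T;\dbR^m)$ is uniformly bounded via its explicit form together with $L^2$-estimates for the associated linear BSDE under (H1)--(H2); and coercivity gives $\lVert M_2(t,i)^{-1}\rVert\les 1/\eps$. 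Combining these three ingredients yields a constant $K\ges 0$, depending only on the data and on $\eps$, with $V^0(t,x,i)\ges -K|x|^2$ for every $(t,x,i)$; one then takes $\g=-K$, which is allowed to be negative as stated.
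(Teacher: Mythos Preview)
Your argument is correct and takes a more explicit, operator-theoretic route than the paper's. Instead of completing the square in $M_2(t,i)$, the paper expands $J(t,x,i;u)$ around $u=0$ via Corollary~\ref{sec:frechetdifferential} as $J(t,x,i;0)+J^0(t,0,i;u)+\langle\cD J(t,x,i;0),u\rangle$, applies Young's inequality to absorb the linear term into half of the coercive quadratic $J^0(t,0,i;u)\ges\l\|u\|^2$, and then invokes minimizing sequences and local weak compactness in $\cU[t,T]$ to obtain the unique minimizer. For the lower bound on $V^0$, the paper obtains $V^0(t,x,i)\ges J^0(t,x,i;0)-\frac{1}{2\l}\dbE\int_t^T|\cD J^0(t,x,i;0)(s)|^2\,ds$ and simply observes that the right-hand side is quadratic in $x$ and continuous in $t$, so a uniform constant $\g$ exists. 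Your approach buys an explicit formula for the optimal control and for $V^0$ as a Schur complement $M_0-M_1^*M_2^{-1}M_1$, at the price of tracking operator norms of $M_0$, $M_1$ and $M_2^{-1}$ separately; the paper's argument is softer and yields less quantitative information but avoids that bookkeeping.
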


\begin{proof}
  First of all, by the uniform convexity of
  $u(\cd)\mapsto J^0(t,0,i;u(\cd))$, we may assume that
  \begin{equation*}
    J^0(t,0,i;u(\cd))\ges\l\,\dbE\1n\int_t^T|u(s)|^2ds,\qq\forall
    u(\cd)\in\cU[0,T],
  \end{equation*}
  for some $\l>0$.
  % Now, for any $t\in[0,T)$, and any
  % $u(\cd)\in\cU[t,T]$, we define the {\it zero-extension} of $u(\cd)$
  % as follows:
  % %
  % \bel{ext}[\,0I_{[0,t)}\oplus u(\cd)](s)=\left\{\2n\ba{ll}0,\qq\ s\in[0,t),\\
  %   %
  %   \ns\ds u(s),\q s\in[t,T].\ea\right.\ee
  % %
  % Then $v(\cd)\equiv0I_{[0,t)}\oplus u(\cd)\in\cU[0,T]$, and due to
  % the initial state being 0, the solution $X(s)$ of
  % %
  % $$\left\{\2n\ba{ll}
  %   %
  %   \ns\ds dX(s)=\big[A(s,\a(s))X(s)+B(s,\a(s))v(s)\big]ds+\big[C(s,\a(s))X(s)+D(s,\a(s))v(s)\big]dW(s),\qq s\in[0,T], \\
  %   %
  %   \ns\ds X(0)=0,\ea\right.$$
  % %
  % satisfies
  % %
  % $$X(s)=0,\qq s\in[0,t].$$
  % %
  % Hence,
  % %
  % \begin{eqnarray}
  %   \label{4.4-RS}J^0(0,0,i;u(\cd))&=&\sum_{k=1}^DP_{ik}(t)J^0(t,0,k;0I_{[0,t)}\oplus u(\cd))\\
  %   &\ges&
  %   \l\,\dbE\1n\int_0^T\big|[0I_{[0,t)}\oplus
  %   u(\cd)](s)\big|^2ds=\l\,\dbE\1n\int_t^T|u(s)|^2ds.
  % \end{eqnarray}
  % %
  % \todo{How to obtain the uniformly convex of $J^0(t,x,i;u(\cd))$ from the uniformly convex of $J^0(t,0,i;u(\cd))$}
  Thus, $u(\cd)\mapsto J^0(t,x,i;u(\cd))$ is uniformly convex \todo{How to prove the uniform convexity of $J^0(t,x,i;u(\cd))$ from the uniform convexity of $J^0(t,0,i;u(\cd))$}for any
  given $(t,x)\in[0,T)\times\dbR^n$. By Corollary \ref{sec:frechetdifferential}, we have
  \bel{uni-convex-prop1}\ba{ll}
  \ns\ds J(t,x,i;u(\cd))=J(t,x,i;0)+J^0(t,0,i;u(\cd))+\dbE\int_t^T\lan\cD J(t,x,i;0)(s),u(s)\ran ds\\
  \ns\ds\ges J(t,x,i;0)+J^0(t,0,i;u(\cd))-{\l\over2}\dbE\int_t^T|u(s)|^2ds-{1\over2\l}\dbE\int_t^T|\cD J(t,x,i;0)(s)|^2ds\\
  \ns\ds\ges{\l\over2}\dbE\int_t^T|u(s)|^2ds+J(t,x,i;0)-{1\over2\l}\dbE\int_t^T|\cD
  J(t,x,i;0)(s)|^2ds,\qq\forall u(\cd)\in\cU[t,T].\ea\ee
  Consequently, % \todo{I did not find references for this step}
  by a standard argument involving minimizing sequence
  and locally weak compactness of Hilbert spaces, we see that for any
  given initial pair $(t,x,i)\in [0,T)\times\dbR^n\times\cS$, Problem (M-SLQ)
  admits a unique open-loop optimal control. Moreover, when $b(\cd), \si(\cd), g, q(\cd), \rho(\cd)=0$,
  (\ref{uni-convex-prop1}) implies that
  \bel{uni-convex-prop2}V^0(t,x,i)\ges J^0(t,x,i;0)-{1\over2\l}\dbE\int_t^T|\cD J^0(t,x,i;0)(s)|^2ds.\ee
  Note that the functions on the right-hand side of (\ref{uni-convex-prop2}) are
  quadratic in $x$ and continuous in $t$. (\ref{uni-convex-prop0})
  follows immediately.
\end{proof}

\section{Closed-loop Solvabilities}
In this section, we shall establish the equivalence between the closed-loop solvability and the existence of a regular solution to the Riccati equation. In the following, we first introduce some notation and the Riccati equation. % then present some lemmas which will be used
% frequently in sequel.
Let
\begin{equation}
  \label{eq:hatsr}
  \begin{aligned}
    \hat S(s,i)&:=B(s,i)^\top P(s,i)+ D(s,i)^\top P(s,i)C(s,i)+S(s,i),\\
    \hat R(s,i)&:=R(s,i)+D(s,i)^\top\1n P(s,i)D(s,i).
  \end{aligned}
\end{equation}
The Riccati equation associated with Problem (M-SLQ) is
\begin{equation}
  \label{Riccati}
  \left\{
    \begin{aligned}
      \dot P(s,i)&+P(s,i)A(s,i)+A(s,i)^\top P(s,i)+C(s,i)^\top P(s,i)C(s,i)\\
      &-\hat{S}(s,i)^\top\hat{R}(s,i)^\dag\hat{S}(s,i)+Q(s,i)+\sum_{k=1}^D\l_{ik}(s)P(s,k)=0,\q \ae~s\in[0,T],\\
      P(T,i)&=G(T,i).
    \end{aligned}
  \right.
\end{equation}

\begin{defn}
  A solution $P(\cd,\cd)\in C([0,T]\times \cS;\dbS^n)$ of (\ref{Riccati}) is said to
  be {\it regular} if
  \begin{equation}
    \label{eq:regular}
    \begin{aligned}
      \cR\big(\hat{S}(s,i)\big)&\subseteq\cR\big(\hat{R}(s,i)\big),\q
      \ae~s\in[0,T],\\
      \hat{R}(\cd,\cd)^\dag\hat{S}(\cd,\cd)&\in L^2(0,T;\dbR^{m\times n}),\\
      \hat{R}(s,i)&\ges0,\qq\ae~s\in[0,T].
    \end{aligned}
  \end{equation}
  % \begin{align}
      %       \label{regular-1}\cR\big(\hat{S}(s,i)\big)&\subseteq\cR\big(\hat{R}(s,i)\big),\q
                                                          %                                                           \ae~s\in[0,T],\\
      %       \label{regular-2}\hat{R}(\cd,\cd)^\dag\hat{S}(\cd,\cd)&\in L^2(0,T;\dbR^{m\times n}),\\
      %       \label{regular-3}\hat{R}(s,i)&\ges0,\qq\ae~s\in[0,T].
                                             %     \end{align}
  A solution $P(\cd,\cd)$ of (\ref{Riccati}) is said to be {\it strongly
    regular} if
  \begin{eqnarray}
    \label{strong-regular}\hat{R}(s,i)\ges \l  I,\qq\ae~s\in[0,T],
  \end{eqnarray}
  for some $\l>0$. The Riccati equation (\ref{Riccati}) is said to be
  ({\it strongly}) {\it regularly solvable}, if it admits a (strongly)
  regular solution. \end{defn}
Clearly, condition (\ref{strong-regular}) implies
(\ref{eq:regular}). Thus, a strongly regular
solution $P(\cd)$ must be regular. Moreover, %  it was shown in
% \citet{sun2014linear} that
if a regular solution of (\ref{Riccati})
exists, it must be unique.

% \bel{Riccati}\left\{\2n\ba{ll}
%   %
%   \ns\ds\dot P(s,i)+P(s,i)A(s,i)+A(s,i)^\top P(s,i)+C(s,i)^\top P(s,i)C(s,i)+Q(s,i)\\
%   %
%   \ns\ds\q-\big[P(s,i)B(s,i)+C(s,i)^\top P(s,i)D(s,i)+S(s,i)^\top\big]\big[R(s,i)+D(s,i)^\top P(s,i)D(s,i)\big]^\dag\\
%   %
%   \ns\ds\qq\cd\big[B(s,i)^\top P(s,i)+D(s,i)^\top P(s,i)C(s,i)+S(s,i)\big]+\sum_{k=1}^D\l_{ik}(s)P(s,k)=0,\q \ae~s\in[0,T],\\
%   %
%   \ns\ds P(T)=G.\ea\right.\ee
% %

\ms

\begin{thm} \sl
  \label{sec:closedloop-regusolu}  Let {\rm(H1)--(H2)} hold. Problem {\rm(M-SLQ)} is
  closed-loop solvable on $[0,T]$ if and only if the
  Riccati equation {\rm(\ref{Riccati})} admits a regular solution
  $P(\cd,\cd)\in C([0,T]\times\cS;\dbS^n)$ and the solution $(\eta(\cd),\z(\cd), \xi_1(\cd),\cds,\xi_D(\cd))$ of the
  following BSDE:
  \begin{eqnarray}
    \label{eta-zeta-xi}\left\{\2n\ba{ll}
    d\eta(s)=-\Big\{\big[A(s,\a(s))^\top\2n-\hat S(s,\a(s))^\top \hat R(s,\a(s))^\dag B(s,\a(s))^\top\big]\eta(s)\\
    \qq\qq\q+\big[C(s,\a(s))^\top\2n-\hat S(s,\a(s))^\top \hat R(s,\a(s))^\dag D(s,\a(s))^\top\big]\z(s)\\
    \ns\ds\qq\qq\q+\big[C(s,\a(s))^\top\2n-\hat S(s,\a(s))^\top \hat R(s,\a(s))^\dag D(s,\a(s))^\top\big]P(s,\a(s))\si(s,\a(s))\\
    \ns\ds\qq\qq\q-\hat S(s,\a(s))^\top \hat R(s,\a(s))^\dag \rho(s,\a(s))+P(s,\a(s))b(s,\a(s))+q(s,\a(s))\Big\}ds\\
    \ns\ds\qq\qq\q+\z(s) dW(s)+\sum_{k=1}^D\xi_k(s)d\wt{N}_k(s),\q s\in[0,T],\\
    \ns\ds\eta(T)=g(T,i),\ea\right.
  \end{eqnarray}
  satisfies
  \begin{eqnarray}
    \label{eta-zeta-regularity}\left\{\ba{ll}
    \hat \rho(s,i)\in\cR(\hat R(s,i)), \qq \ae~\as\\
    \ns\ds \hat R(s,i)^\dag\hat \rho(s,i)\in L_\dbF^2(0,T;\dbR^m),\ea\right.
  \end{eqnarray}
  with
  \begin{align}
    \label{eq:hatrrho}
    \hat \rho(s,i)&=B(s,i)^\top\eta(s)+D(s,i)^\top\z(s)+D(s,i)^\top P(s,i)\si(s,i)+\rho(s,i).
  \end{align}
  In this case, Problem {\rm(M-SLQ)} is
  closed-loop solvable on any $[t,T]$, and the closed-loop optimal
  strategy $(\Th^*(\cd),v^*(\cd))$ admits the following
  representation:
  \begin{align}
    \label{Th-v-rep}\left\{\2n\ba{ll}
    \Th^*(s)=-\hat R(s,\a(s))^\dag\hat S(s,\a(s))
               +\big[I-\hat R(s,\a(s))^\dag\hat R(s,\a(s))\big]\Pi,\\
    \ns\ds v^*(s)=-\hat R(s,\a(s))^\dag\hat\rho(s,\a(s))+\big[I-\hat R(s,\a(s))^\dag\hat R(s,\a(s))\big]\n(s),\ea\right.
  \end{align}
  for some $\Pi(\cd)\in L^2(t,T;\dbR^{m\times n})$ and $\n(\cd)\in
  L_\dbF^2(t,T;\dbR^m)$, and the value function is given by
  \begin{align}
    \label{Value}
    V(t,x,i)=\dbE\bigg\{&\langle P(t,i)x,x\rangle+2\langle\eta(t),x\rangle+\int_t^T\[\hat P(s,\a(s))-\lan\hat R(s,\a(s))^\dag\hat\rho(s,\a(s)),\hat\rho(s,\a(s)) \ran\]ds\bigg\},
  \end{align}
  where
  \begin{eqnarray*}
    \hat P(s,i):=\langle
    P(s,i)\si(s,i)+2\z(s),\si(s,i)\rangle+2\langle\eta(s),b(s,i)\rangle.
  \end{eqnarray*}

\end{thm}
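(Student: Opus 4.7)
The plan is to prove the two implications separately: the forward (sufficiency) direction by completing the square in the It\^o expansion of $\lan PX,X\ran+2\lan\eta,X\ran$, and the reverse (necessity) direction by constructing $P$ through the stochastic flow approach singled out in the introduction.

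For sufficiency, assume $P(\cd,\cd)\in C([0,T]\times\cS;\dbS^n)$ is a regular solution of \eqref{Riccati} and $(\eta,\z,\xi_1,\cds,\xi_D)$ solves \eqref{eta-zeta-xi} with $\hat\rho$ satisfying \eqref{eta-zeta-regularity}. Fix $(t,x,i)$ and $u(\cd)\in\cU[t,T]$ with state $X^u$, and set $u^*:=\Th^*X^u+v^*$ with $(\Th^*,v^*)$ from \eqref{Th-v-rep}. Applying It\^o's formula to $s\mapsto\lan P(s,\a(s))X^u(s),X^u(s)\ran+2\lan\eta(s),X^u(s)\ran$, the jumps of $\a$ contribute the drift piece $\sum_k\l_{\a(s)k}(s)P(s,k)$ together with a mean-zero $d\wt N_k$-martingale. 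Substituting the drift of $P$ from \eqref{Riccati} and the drift of $\eta$ from \eqref{eta-zeta-xi}, and grouping bilinear terms through $\hat S,\hat R,\hat\rho$ defined in \eqref{eq:hatsr} and \eqref{eq:hatrrho}, one reaches the completion-of-squares identity
\begin{equation*}
J(t,x,i;u)=\lan P(t,i)x,x\ran+2\lan\eta(t),x\ran+\dbE\int_t^T\bigl[\hat P(s,\a(s))-\lan\hat R(s,\a(s))^\dag\hat\rho,\hat\rho\ran\bigr]ds+\dbE\int_t^T\lan\hat R(s,\a(s))(u-u^*),u-u^*\ran ds.
\end{equation*}
Regularity forces $\hat R\ges 0$ and ensures $\Th^*,v^*$ are well-defined, so the last integrand is nonnegative and vanishes at $u=u^*$; since $(x,i)$ is arbitrary this gives \eqref{closed-opti} and the formula \eqref{Value}.

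For necessity, assume $(\Th^*,v^*)\in L^2(0,T;\dbR^{m\times n})\times\cU[0,T]$ is closed-loop optimal on $[0,T]$. Then for every $(x,i)$ the outcome $u^*(\cd;x,i):=\Th^*X^*(\cd;x,i)+v^*$ is open-loop optimal, so the stationarity condition \eqref{J_u=0} holds. By linearity in $x$, decompose the adjoint process as $Y^{u^*}(s;0,x,i)=P(s,\a(s))X^*(s;0,x,i)+\eta(s)$ with $P(s,i)$ symmetric and deterministic and $(\eta,\z,\xi_\cd)$ the response at $x=0$. To construct $P$ I would follow the stochastic flow approach of Tang \cite{Tang2003}: let $\Phi^{\Th^*}(\cd;t,i)$ be the fundamental matrix of the homogeneous closed-loop equation \eqref{closed-loop0} with $v^*=b=\si=0$, and define $P(t,i):=\dbE[Y_0^0(t;t,\cd,i)]$ in the spirit of Proposition \ref{sec:F-K}, obtaining a continuous $\dbS^n$-valued function. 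Applying It\^o's formula to $\lan P(s,\a(s))X^u(s),X^u(s)\ran+2\lan\eta(s),X^u(s)\ran$ for a generic admissible $u$ and matching the resulting expression against the representation of $J$ in Proposition \ref{RP-cost} term by term forces $P$ to satisfy \eqref{Riccati} and $(\eta,\z,\xi_k)$ to satisfy \eqref{eta-zeta-xi}. The stationarity condition, rewritten through $P$ and $\eta$, becomes $\hat R u^*+\hat S X^*+\hat\rho=0$, from which \eqref{eq:regular}, \eqref{eta-zeta-regularity}, and the Moore-Penrose form \eqref{Th-v-rep} follow.

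The principal obstacle lies in constructing $P$ as a deterministic, continuous, $\dbS^n$-valued function in the necessity direction. The pointwise argument of \cite{Sun2016olcls} does not carry over because the regime-switching It\^o expansion contains the coupling $\sum_k\l_{ik}(t)P(t,k)$ between regimes and the jump martingale $\xi_k d\wt N_k$, so $Y^{u^*}$ cannot be identified with $PX^*+\eta$ through an ODE-in-$t$ comparison alone. The stochastic flow route circumvents this by first extracting the feedback $\Th^*$ from the open-loop stationarity, then defining $P(t,i)$ as a Feynman-Kac expectation under $\Phi^{\Th^*}$, and finally using the Markov property of $\a$ together with It\^o's formula to recover \eqref{Riccati} in the manner of \eqref{3.8}. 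Once $P$ is known to be deterministic and continuous, the range and integrability conditions \eqref{eq:regular} and \eqref{eta-zeta-regularity} follow from requiring that the inner minimization in $u$ of the cost representation admits a finite minimizer for every $(x,i)$.
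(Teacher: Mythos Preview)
Your sufficiency direction is essentially identical to the paper's: apply It\^o's formula to $\lan P(s,\a(s))X(s),X(s)\ran+2\lan\eta(s),X(s)\ran$, substitute the Riccati equation and the BSDE for $\eta$, and complete the square in $u-\Th^*X-v^*$ using $\hat R\ges0$.

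For necessity the overall strategy (stochastic flow, linearity in $x$, stationarity) matches, but the concrete mechanism you describe for extracting the Riccati equation is less direct than the paper's and risks circularity. The paper does not recover \eqref{Riccati} by matching the quadratic form $\lan PX,X\ran+2\lan\eta,X\ran$ against the cost representation of Proposition~\ref{RP-cost}; that would require already knowing the drift of $P$. Instead, after reducing to the homogeneous closed-loop FBSDE (subtracting the $x=0$ solution), the paper builds the matrix-valued flows $X(s;t,i),Y(s;t,i),Z(s;t,i),\G_k(s;t,i)$ column-by-column from the basis initial data $e_1,\dots,e_n$, sets $P(t,i):=Y(t;t,i)$, and uses the \emph{semigroup/flow identity} $Y(s;t,x,i)=Y(s;s,X(s;t,x,i),\a(s))$ to obtain the pathwise relation $Y(s;t,i)=P(s,\a(s))X(s;t,i)$. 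It then applies It\^o's formula to $P(s,\a(s))X(s;t,i)$ and compares the resulting SDE \emph{coefficient-by-coefficient} with the BSDE for $Y(s;t,i)$. This comparison simultaneously forces $Z(s;t,i)=P(s,\a(s))[C+D\Th^*]X(s;t,i)$, $\G_k(s;t,i)=[P(s,k)-P(s,\a(s-))]X(s;t,i)$, and the Lyapunov-type ODE for $P$; invertibility of $X(s;t,i)$ lets you strip the $X$-factor. The stationarity condition then reads $\hat S+\hat R\Th^*=0$, giving the range condition in \eqref{eq:regular} and the form of $\Th^*$; symmetry of $P$ is obtained by rewriting the Lyapunov equation in self-adjoint form and invoking uniqueness. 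The second stage defines $\eta:=Y^*-PX^*$, $\z:=Z^*-P[(C+D\Th^*)X^*+Dv^*+\si]$, $\xi_k:=\G_k^*-[P(\cd,k)-P(\cd,\a(\cd-))]X^*$ and differentiates directly to get \eqref{eta-zeta-xi}, while the full stationarity condition collapses to $\hat R v^*+\hat\rho=0$, yielding \eqref{eta-zeta-regularity} and the form of $v^*$. Your Feynman--Kac definition of $P$ is consistent with this (since $Y(t;t,i)$ is deterministic), but the decisive step you should make explicit is the direct comparison of $d(PX)$ with $dY$ rather than passage through the cost functional.
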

\begin{proof}
  {\it Necessity.}
  Let $(\Th^*(\cd),v^*(\cd))$ be a closed-loop optimal strategy of Problem (M-SLQ) over $[t,T]$ and set
  \begin{align*}
    (X^*(\cd),Y^*(\cd),Z^*(\cd),\G^*(\cd)):=(X^{\Th^*,v^*}(\cd\,;t,x,i),Y^{\Th^*,v^*}(\cd\,;t,x,i),Z^{\Th^*,v^*}(\cd\,;t,x,i),\G^{\Th^*,v^*}(\cd\,;t,x,i)).
  \end{align*}
  % Then for any $x\in \dbR^n$, the following FBSDE admits an adapted solution $(X^*, Y^*, Z^*, \G^*)$:
  % $$ \left\{ \begin{aligned}
  %       %
  %     dX^*(s)=&\Big[(A(s,\a(s))+B(s,\a(s))\Th^*(s,\a(s))) X^*(s)+B(s,\a(s))v^*(s)+b(s,\a(s))\Big]ds\\
  %     &+\Big[(C(s,\a(s))+D(s,\a(s))\Th^*(s,\a(s)))X^*(s)+D(s,\a(s))v^*(s)+\si(s,\a(s))\Big]dW(s), \\
  %     dY^*(s)=&-\Big[A(s,\a(s))Y^*(s)+C(s,\a(s))^\top Z^*(s)+\big[Q(s,\a(s))+S(s,\a(s))\Th^*(s,\a(s))\big]X^*(s)\\
  %     &\qq+S(s,\a(s))^\top v^*(s)+q(s,\a(s))\Big]ds+Z^*(s)dW(s)+\sum_{k=1}^D\G_k^*(s)d\widetilde{N}_k(s), \q s\in[t,T], \\ 
  %     X^*(t)=& x, \qq Y^*(T)=G(T,\a(T))X^*(T)+g(T,\a(T)).
  %   \end{aligned}\right.
  % $$
  Then the following stationary condition hold:
  \begin{equation}
    \label{eq:statcondorig}
    \begin{aligned}
      B(s,\a(s))^\top Y^*(s)&+D(s,\a(s))^\top Z^*(s)+\big[S(s,\a(s))+R(s,\a(s))\Th^*(s)\big]X^*(s)\\
      &+R(s,\a(s))v^*(s)+\rho(s,\a(s))=0\quad \mbox{a.e. a.s.}
    \end{aligned}
  \end{equation}

  Since the above admits a solution for each $x\in\dbR^n$, and $(\Theta^*(\cd,\cd),v^*(\cd))$ is independent of $x$, by subtracting soulutions corresponding to $x$ and $0$, the later from the former, we see that for any $x\in \dbR^n$, as long as $(X(\cd),Y(\cd),Z(\cd),\G(\cd))$ is the adapted solution to the FBSDE
  $$ \left\{ \begin{aligned}
      dX(s)=&\big[A(s,\a(s))+B(s,\a(s))\Th^*(s)\big] X(s)ds+\big[C(s,\a(s))+D(s,\a(s))\Th^*(s)\big]X(s)dW(s), \\
      dY(s)=&-\Big[A(s,\a(s))^\top Y(s)+C(s,\a(s))^\top Z(s)+\big[Q(s,\a(s))+S(s,\a(s))^\top\Th^*(s)\big]X(s)\Big]ds\\
      &+Z(s)dW(s)+\sum_{k=1}^D\G_k(s)d\widetilde{N}_k(s), \q s\in[t,T], \\
      X(t)=& x, \qq Y(T)=G(T,\a(T))X(T),
    \end{aligned}\right.
  $$
  one must have the following stationary condition:
  \begin{equation}
    \label{eq:Newstationary}
    \begin{aligned}
      B(s,\a(s))^\top Y(s;t,x,i)&+D(s,\a(s))^\top Z(s;t,x,i)\\
      &+\big[S(s,\a(s))+R(s,\a(s))\Th^*(s)\big]X(s;t,x,i)=0\quad \mbox{a.e. a.s.},
    \end{aligned}
  \end{equation}
  where
  \begin{align*}
    &(X(\cd\,;t,x,i),Y(\cd\,;t,x,i),Z(\cd\,;t,x,i),\G(\cd\,;t,x,i))\\
    &:=(X_0^{\Th^*,0}(\cd\,;t,x,i),Y_0^{\Th^*,0}(\cd\,;t,x,i),Z_0^{\Th^*,0}(\cd\,;t,x,i),\G_0^{\Th^*,0}(\cd\,;t,x,i)).
  \end{align*}
  Let $e_i$ denote the unit vector of $\dbR^n$ whose $i$-th  component is one. Define, for $t\leq s\leq T$,
  \begin{align*}
    X(s;t,i):=&(X(s;t,e_1,i),\cdots,X(s;t,e_n,i))\\
    Y(s;t,i):=&(Y(s;t,e_1,i),\cdots,Y(s;t,e_n,i))\\
    Z(s;t,i):=&(Z(s;t,e_1,i),\cdots,Z(s;t,e_n,i))\\
    \G_{k}(s;t,i):=&(\G_{k}(s;t,e_1,i),\cdots,\G_{k}(s;t,e_n,i)).
  \end{align*}
  It is easy to verify that
  \begin{equation}
    \label{eq:RelationXYZ}
    \begin{aligned}
      & X(s;t,x,i)=X(s;t,i)x, \q Y(s;t,x,i)=Y(s;t,i)x,\\
      &Z(s;t,x,i)=Z(s;t,i)x, \q \G_{k}(s;t,x,i)=\G_{k}(s;t,i)x.
    \end{aligned}
  \end{equation}
  In particular, if we set $P(t,i):=Y(t;t,i)$, then
  \begin{align*}
    Y(t;t,x,i)=Y(t;t,i)x=P(t,i)x.
  \end{align*}
  Therefore,
  \begin{align*}
    Y(s;t,i)x&=Y(s;t,x,i)=Y(s;s,X(s;t,x,i),\a(s))=Y(s;s,\a(s))X(s;t,x,i)\\
             &=P(s,\a(s))X(s;t,i)x,\qq \mbox{for any } x\in\dbR^n,
  \end{align*}
  which leads to
  \begin{equation}
    \label{eq:YP}
    Y(s;t,i)=P(s,\a(s))X(s;t,i).
  \end{equation}
  Applying the It{\^o}'s formula to $P(s,\a(s))X(s;t,i)$ yields
  \begin{equation}
    \label{eq:PX}
    \begin{aligned}
      d[P(s,\a(s))X(s;t,i)]=&\bigg[\dot P(s,\a(s))+P(s,\a(s))\big[A(s,\a(s))+B(s,\a(s))\Th^*(s)\big]\\
      &\q+\sum_{k=1}^D\l_{\a(s-)k}(s)\big[P(s,k)-P(s,\a(s-))\big]\bigg]X(s;t,i)ds\\
      &+P(s,\a(s))\big[C(s,\a(s))+D(s,\a(s))\Th^*(s)\big]X(s;t,i)dW(s)\\
      &+\sum_{k=1}^D\big[P(s,k)-P(s,\a(s-))\big]X(s;t,i)d\wt{N}_k(s)
    \end{aligned}
  \end{equation}
  Observing that $Y(s;t,i)$ satisfied the following SDE
  \begin{equation}
    \label{eq:Y}
    \left\{
      \begin{aligned}
        dY(s;t,i)=&-\Big[A(s,\a(s))^\top Y(s;t,i)+C(s,\a(s))^\top Z(s;t,i)\\
        &\q+\big[Q(s,\a(s))+S(s,\a(s))\Th^*(s)\big]X(s;t,i)\Big]ds\\
        &+Z(s;t,i)dW(s)+\sum_{k=1}^D\G_k(s;t,i)d\widetilde{N}_k(s), \q s\in[0,T], \\
        Y(T;0,i)=&G(T,\a(T))X(T;0,i).
      \end{aligned}\right.
  \end{equation}
  Comparing the coefficients of \eqref{eq:PX} and \eqref{eq:Y}, we must have
  \begin{equation}
    \label{eq:ZGP}
    \begin{aligned}
      Z(s;t,i)=&P(s,\a(s))\big[C(s,\a(s))+D(s,\a(s))\Th^*(s)\big]X(s;t,i),\\
      \G_k(s;t,i)=&\big[P(s,k)-P(s,\a(s-))\big]X(s;t,i),
    \end{aligned}
  \end{equation}
  and
  \begin{equation}
    \label{eq:7}
    \begin{aligned}
      &\bigg\{\dot P(s,\a(s))+A(s,\a(s))^\top P(s,\a(s))+P(s,\a(s))A(s,\a(s))+C(s,\a(s))^\top P(s,\a(s))C(s,\a(s))\\
      &\q+\bigg[P(s,\a(s))B(s,\a(s))+C(s,\a(s))^\top P(s,\a(s))D(s,\a(s))+S(s,\a(s))^\top\bigg]\Th^*(s)+Q(s,\a(s))\\
      &\q+\sum_{k=1}^D\l_{\a(s-)k}(s)\big[P(s,k)-P(s,\a(s-))\big]\bigg\}X(s;t,i)=0,
    \end{aligned}
  \end{equation}
  where the last equation leads to
  % \begin{equation}
  %   \label{eq:Pdiff}
  %   \begin{aligned}
  %     &P'(s,i)+A(s,i)^\top P(s,i)+P(s,i)A(s,i)+C(s,i)^\top P(s,i)C(s,i)\\
  %     &\q+\bigg[P(s,i)B(s,i)+C(s,i)^\top P(s,i)D(s,i)+S(s,i)^\top\bigg]\Th^*(s,i)+Q(s,i)\\
  %     &\q+\sum_{k=1}^D\l_{ik}(s)\big[P(s,k)-P(s,i)\big]=0.
  %   \end{aligned}
  % \end{equation}
\begin{equation}\label{eq:Pdiff}
    \begin{aligned}
      &\dot P(s,\a(s))+A(s,\a(s))^\top P(s,\a(s))+P(s,\a(s))A(s,\a(s))+C(s,\a(s))^\top P(s,\a(s))C(s,\a(s))\\
      &\q+\bigg[P(s,\a(s))B(s,\a(s))+C(s,\a(s))^\top P(s,\a(s))D(s,\a(s))+S(s,\a(s))^\top\bigg]\Th^*(s)+Q(s,\a(s))\\
      &\q+\sum_{k=1}^D\l_{\a(s-)k}(s)\big[P(s,k)-P(s,\a(s-))\big]=0.
    \end{aligned}
  \end{equation}
  From \eqref{eq:RelationXYZ}, \eqref{eq:YP} and \eqref{eq:ZGP}, and the definition  of $\hat{S}(\cd,\cd)$ and $\hat{R}(\cd,\cd)$ in \eqref{eq:hatsr},  the stationary condition \eqref{eq:Newstationary} can be rewritten as
  \begin{equation*}
    \big[\hat{S}(s,\a(s))+\hat{R}(s,\a(s))\Th^*(s)\big]X(s;t,i)=0\quad \mbox{a.e. a.s.},
  \end{equation*}
  which yields
  \begin{equation}
    \label{eq:statcond2}
    \hat{S}(s,\a(s)) +\hat{R}(s,\a(s))\Th^*(s)=0, \q i\in \dbS, \quad \mbox{a.e.}.
  \end{equation}
  This implies
  \begin{align*}
    \cR\big(\hat{S}(s,i)\subseteq\cR\big(\hat{R}(s,i)\big),\q
    \ae~s\in[0,T].
  \end{align*}
  % \begin{equation*}
  %   \begin{aligned}
  %     \bigg[&B(s,\a(s))^\top P(s,\a(s)) +D(s,\a(s))^\top P(s,\a(s))C(s,\a(s))+S(s,\a(s))\\
  %     &\qq+\big[R(s,\a(s))+D(s,\a(s))^\top P(s,\a(s))D(s,\a(s))\big]\Th^*(s,\a(s))\bigg]X(s;t,i)=0\quad \mbox{a.e. a.s.},
  %   \end{aligned}
  % \end{equation*}
  % which yields
  % \begin{equation}
  %   \label{eq:statcond2}
  %   \begin{aligned}
  %     &B(s,\a(s))^\top P(s,\a(s)) +D(s,\a(s))^\top P(s,\a(s))C(s,\a(s))+S(s,\a(s))\\
  %     &\qq+\big[R(s,\a(s))+D(s,\a(s))^\top P(s,\a(s))D(s,\a(s))\big]\Th^*(s,\a(s))=0\quad \mbox{a.e. a.s.}.
  %   \end{aligned}
  % \end{equation}
  % This implies
  % \begin{align*}
  %   &\cR\big(B(s,i)^\top P(s,i)+D(s,i)^\top P(s,i)C(s,i)+S(s,i)\big)\\
  %   &\subseteq\cR\big(R(s,i)+D(s,i)^\top P(s,i)D(s,i)\big),\q
  %   \ae~s\in[0,T],\nonumber
  % \end{align*}
Using \eqref{eq:statcond2}, we can rewrite \eqref{eq:Pdiff} as
  \begin{equation}\label{eq:Pdiff2}
    \begin{aligned}
      \dot P(s,\a(s))&+\big[A(s,\a(s))+B(s,\a(s))\Th^*(s)\big]^\top P(s,\a(s))\\
      &+P(s,\a(s))\big[A(s,\a(s))+B(s,\a(s))\Th^*(s)\big]\\
      &+\big[C(s,\a(s))+D(s,\a(s))\Th^*(s)\big]^\top P(s,\a(s))\big[C(s,\a(s))+D(s,\a(s))\Th^*(s)\big]\\
      &+\Th^*(s)^\top R(s,\a(s))\Th^*(s)+S(s,\a(s))^\top \Th^*(s)+ \Th^*(s)^\top S(s,\a(s))\\
      &+Q(s,\a(s))+\sum_{k=1}^D\l_{\a(s-)k}(s)\big[P(s,k)-P(s,\a(s-))\big]=0.
    \end{aligned}
  \end{equation}
  Since $P(T,i)=G(T,i)\in \dbS^n$ and $Q(\cd,\cd), R(\cd,\cd)$ are symmetric, we must have $P(\cd,\cd)\in C([t,T]\times S; \dbS^n) $ due to the uniqueness of the solution of \eqref{eq:Pdiff2}. Let $\hat{R}(\cd,\cd)^\dag$ be the pseudo inverse of $\hat{R}(\cd,\cd)$, then the solution of  \eqref{eq:statcond2} admits the following representation
  \begin{equation}
    \label{eq:3}
    \Th^*(s)=-\hat{R}(s,\a(s))^\dag \hat{S}(s,\a(s))+\big(I-\hat{R}(s,\a(s))^\dag\hat{R}(s,\a(s))\big)\Pi(s,\a(s)),
  \end{equation}
  for some $\Pi(\cd,\cd)\in L^2(t,T;\dbR^{m\times n})$. Noting that
  \begin{equation}
    \label{eq:stheta}
    \begin{aligned}
      \hat{S}(s,\a(s))^\top\Th^*(s)&=-\Th^*(s)\hat{R}(s,\a(s))\Th^*(s)\\
      &=-\Th^*(s)\hat{R}(s,\a(s))\big[-\hat{R}(s,\a(s))^\dag \hat{S}(s,\a(s))+\big(I-\hat{R}(s,\a(s))^\dag\hat{R}(s,\a(s))\big)\Pi(s,\a(s))\big]\\
      &=-\hat{S}(s,\a(s))^\top\hat{R}(s,\a(s))^\dag\hat{S}(s,\a(s))
    \end{aligned}
  \end{equation}
  Observing  $\sum_{k=1}^D\l_{ik}(s)=0$ and substituting the above equation into \eqref{eq:Pdiff}, we obtain
\begin{equation}\label{eq:Pdiff3}
    \begin{aligned}
      \dot P(s,\a(s))&+A(s,\a(s))^\top P(s,\a(s))+P(s,\a(s))A(s,\a(s))\\
      &+C(s,\a(s))^\top P(s,\a(s))C(s,\a(s))-\hat{S}(s,\a(s))^\top\hat{R}(s,\a(s))^\dag\hat{S}(s,\a(s))\\
      &+Q(s,\a(s))+\sum_{k=1}^D\l_{\a(s-)k}(s)P(s,k)=0,
    \end{aligned}
  \end{equation}
  which is equivalent to the Riccati equation \eqref{Riccati}.

  In the next, we try to determine $v^*(\cd)$. Let
  \begin{equation*}
    \left\{
      \begin{aligned}
        \eta(s)=Y^*(s)&-P(s,\a(s))X^*(s)\\
        \z(s)=Z^*(s)&-P(s,\a(s))[C(s,\a(s))+D(s,\a(s))\Th^*(s)]X^*(s) \q s\in[t,T]\\
        &-P(s,\a(s))D(s,\a(s))v^*(s)-P(s,\a(s))\si(s,\a(s))\\
        \xi_k(s)=\G_k^*(s)&-\big[P(s,k)-P(s,\a(s-))\big]X^*(s).
      \end{aligned}
    \right.
  \end{equation*}
  Then
  \begin{eqnarray}\label{eq:etas}
      d\eta(s)&=&dY^*(s)-dP(s,\a(s))\cd X^*(s)-P(s,\a(s))dX^*(s)\nonumber\\
      &=&-\bigg[A(s,\a(s))^\top Y^*(s)+C(s,\a(s))^\top Z^*(s)+\big(Q(s,\a(s))+S(s,\a(s))^\top\Th^*(s)\big)X^*(s)\nonumber\\
               &&\qq+S(s,\a(s))^\top v^*(s)+q(s,\a(s))\bigg]ds+Z^*(s)dW(s)+\sum_{k=1}^D\G_k^*(s)d\widetilde{N}_k(s)\nonumber\\
               &&+\bigg\{\bigg[A(s,\a(s))^\top P(s,\a(s))+P(s,\a(s))A(s,\a(s))+C(s,\a(s))^\top P(s,\a(s))C(s,\a(s))\nonumber\\
               &&\qq-\hat{S}(s,\a(s))^\top\hat{R}(s,\a(s))^\dag\hat{S}(s,\a(s))+Q(s,\a(s))\bigg]X^*(s)\nonumber\\
               &&\qq-P(s,\a(s))\bigg[\bigg(A(s,\a(s))+B(s,\a(s))\Th^*(s)\bigg)X^*(s)\nonumber\\
               &&\qq+B(s,\a(s))v^*(s)+b(s,\a(s))\bigg]\bigg\}ds\nonumber\\
               &&-P(s,\a(s))\bigg[\bigg(C(s,\a(s))+D(s,\a(s))\Th^*(s)\bigg)X^*(s)+D(s,\a(s))v^*(s)\nonumber\\
               &&\qq\qq\qq\q+\si(s,\a(s))\bigg]dW(s)-\sum_{k=1}^D\big[P(s,k)-P(s,\a(s-))\big]X^*(s)d\wt{N}_k(s)\nonumber\\
      &=&-\bigg[A(s,\a(s))^\top \eta(s)+C(s,\a(s))^\top \z(s)+\hat{S}(s,\a(s))^\top\big[\Th^*(s)X^*(s)+v^*(s)\big]\nonumber\\
               &&\qq+C(s,\a(s))^\top P(s,\a(s))\si(s,\a(s))+P(s,\a(s))b(s,\a(s))+q(s,\a(s))\nonumber\\
               &&\qq+\hat{S}(s,\a(s))^\top\hat{R}(s,\a(s))^\dag\hat{S}(s,\a(s))X^*(s)\bigg]ds+\z(s)dW(s)+\sum_{k=1}^D\xi_k(s)\wt{N}_k(s)\nonumber\\
      &=&-\bigg[A(s,\a(s))^\top \eta(s)+C(s,\a(s))^\top \z(s)+\hat{S}(s,\a(s))^\top v^*(s)+C(s,\a(s))^\top P(s,\a(s))\si(s,\a(s))\\
              &&\qq+P(s,\a(s))b(s,\a(s))+q(s,\a(s))\bigg]ds+\z(s)dW(s)+\sum_{k=1}^D\xi_k(s)\wt{N}_k(s),  \nonumber               
    % \begin{aligned}
    %      \end{aligned}
  \end{eqnarray}
  where the last equality follows from the equation \eqref{eq:stheta}.

  According to \eqref{eq:statcondorig}, we have
  \begin{align*}
    0=&    B(s,\a(s))^\top Y^*(s)+D(s,\a(s))^\top Z^*(s)\\
      &+\big[S(s,\a(s))+R(s,\a(s))\Th^*(s)\big]X^*(s)+R(s,\a(s))v^*(s)+\rho(s,\a(s))\\
    =&B(s,\a(s))^\top \big[\eta(s)+P(s,\a(s))X^*(s)\big]\\
      &+D(s,\a(s))^\top \bigg\{\z(s)+P(s,\a(s))\big[C(s,\a(s))+D(s,\a(s))\Th^*(s)\big]X^*(s)\\
      &-P(s,\a(s))D(s,\a(s))v^*(s)-P(s,\a(s))\si(s,\a(s))\bigg\}\\
      &+\big[S(s,\a(s))+R(s,\a(s))\Th^*(s)\big]X^*(s)+R(s,\a(s))v^*(s)+\rho(s,\a(s))\\
    =&\big[\hat{S}(s,\a(s))+\hat{R}(s,\a(s))\Th^*(s)]X^*(s)+\hat{\rho}(s,\a(s))+\hat{R}(s,\a(s))v^*(s)\\
    =&\hat{\rho}(s,\a(s))+\hat{R}(s,\a(s))v^*(s),
       %        B(s,\a(s))^\top\eta(s)\\
      %                                                         &+D(s,\a(s))^\top\z(s)+D(s,\a(s))^\top P(s,\a(s))\si(s,\a(s))+\rho(s,\a(s))+\hat{R}(s,\a(s))v^*(s)\\
      %       =&B(s,\a(s))^\top\eta(s)+D(s,\a(s))^\top\z(s)+D(s,\a(s))^\top P(s,\a(s))\si(s,\a(s))+\rho(s,\a(s))+\hat{R}(s,\a(s))v^*(s)
  \end{align*}
  where $\h \rho(s,i)$ is defined by \eqref{eq:hatrrho}. Thus we have
  \begin{equation*}
    \hat{\rho}(s,i)\in \cR(\hat{R}(s,i)),
  \end{equation*}
  and
  \begin{equation*}
    v^*(s)=-\hat{R}(s,\a(s))^\dag\hat{\rho}(s,\a(s))+\big[I-\hat{R}(s,\a(s))^\dag\hat{R}(s,\a(s))]\n(s,\a(s)),
  \end{equation*}
  for some $\n(\cd,i)\in L_\dbF^2(t,T;\dbR^m)$. Consequently,
  \begin{equation*}
    \begin{aligned}
      \hat{S}(s,\a(s))^\top v^*(s)&=-\Th^*(s)^\top \hat{R}(s,\a(s))v^*(s)\\
      &=\Th^*(s)^\top \hat{R}(s,\a(s)) \hat{R}(s,\a(s))^\dag \hat{\rho}(s,\a(s))\\
      &=-\hat{S}(s,\a(s))^\top\hat{R}(s,\a(s))^\dag \hat{\rho}(s,\a(s)).
    \end{aligned}
  \end{equation*}
  Thus observing the definition of $\hat{\rho}(s,\a(s))$ and substituting the above equation into \eqref{eq:etas} yield the desired result of equation \eqref{eta-zeta-xi}.

  {\it Sufficiency.}  % To simply our proof, we introduce the following notation:
  % \begin{eqnarray*}
  %   \wt T_\a^0X(\cd)&:=&\bigg[\dot P(\cd,\a(\cd))+P(\cd,\a(\cd))A(\cd,\a(\cd))+A(\cd,\a(\cd))^\top P(\cd,\a(\cd))\\
  %   &&\q +C(\cd,\a(\cd))^\top P(\cd,\a(\cd))C(\cd,\a(\cd))+Q(\cd,\a(\cd))+\sum_{k=1}^D\l_{\a(\cd),k}(\cd)P(\cd,k)\bigg]X(\cd). \\
  %   \wt T_\a^1u(\cd) &:=&\big[R(\cd,\a(\cd))+D(\cd, \a(\cd))^\top P(\cd, \a(\cd))D(\cd, \a(\cd))\big]u(\cd)\\
  %   \wt T_\a^2X(\cd) &:=&\big[B(\cd,\a(\cd))^\top P(\cd,\a(\cd))+D(\cd,\a(\cd))^\top P(\cd,\a(\cd))C(\cd,\a(\cd))+S(\cd,\a(\cd))\big]X(\cd)
  % \end{eqnarray*}
  Applying It{\^o}'s formula to $s\mapsto \langle P(s,\a(s))X(s)+2\eta(s),X(s)\rangle$ yields
  \begin{equation}
    \label{eq:Jtxiu}
    \begin{aligned}
      &J(t,x,i;u(\cd))\\
      &=\dbE \bigg\{\lan P(t,i)x+2\eta(t),x\ran+\int_t^T\bigg[\lan P(s,\a(s))\si(s,\a(s))+2\z(s),\si(s,\a(s))\ran +2\lan\eta(s),b(s,\a(s))\ran\bigg]ds\\
      &\qq\q+\int_t^T\bigg[\Blan \hat Q(s,\a(s))X(s),X(s)\Bran+\Blan \hat R (s,\a(s))u(s)+2\hat S(s,\a(s)) X(s)+2\hat{\rho}(s,\a(s)),u(s)\Bran\\
      &\qq\qq\qq\q+2\Blan\hat{S}(s,\a(s))^\top \hat{R}(s,\a(s))^\dag \hat\rho(s,\a(s)),X(s)\Bran\bigg]ds\bigg\},
    \end{aligned}
  \end{equation}
  where
  \begin{equation}
    \label{eq:hatQ}
    \begin{aligned}
      \h Q(s,i)&:=\dot P(s,i)+P(s,i)A(s,i)+A(s,i)^\top P(s,i)\\
      &\qq+C(s,i)^\top P(s,i)C(s,i)+Q(s,i)+\sum_{k=1}^D\l_{ik}(s)P(s,k).
    \end{aligned}
  \end{equation}
  Let $\Th^*(\cd)$ and $v^*(\cd)$ be defined by \eqref{Th-v-rep}. It is easy to verify that
  \begin{align*}
    \hat S(s,\a(s))&=-\hat R(s,\a(s))\Th^*(s),\\
    \hat Q(s,\a(s))&=\Th^*(s)^\top \hat R(s,\a(s))\Th^*(s),\\
    \hat \rho(s,\a(s))&=-\hat R(s,\a(s))v^*(s),\\
    -\hat S(s,\a(s))^\top \hat R(s,\a(s))^\dag\hat \rho(s,\a(s))&=-\Th^*(s)^\top \hat R(s,\a(s))v^*(s).
  \end{align*}
Substituting these equation into \eqref{eq:Jtxiu} yields

\begin{align*}
  &J(t,x,i;u(\cd))\\
  &=\dbE \bigg\{\lan P(t,i)x+2\eta(t),x\ran +\int_t^T\bigg[\lan P(s,\a(s))\si(s,\a(s))+2\z(s),\si(s,\a(s))\ran +2\lan\eta(s),b(s,\a(s))\ran\bigg]ds  \\
  &\qq\q+\int_t^T\bigg[\Blan \Th^*(s)^\top \hat R(s,\a(s))\Th^*(s)X(s),X(s)\Bran\\
  &\qq\qq\qq\q+\Blan\hat R(s,\a(s))u(s)-2 \hat R (s,\a(s))\big[\Th^*(s)X(s)+v^*(s)\big],u(s)\Bran \\
  &\qq\qq\qq\q+2\Blan\Th^*(s)^\top \hat R(s,\a(s))v^*(s),X(s)\Bran \bigg]ds\bigg\}\\
  &=\dbE \bigg\{\lan P(t,i)x+2\eta(t),x\ran +\int_t^T\bigg[\lan P(s,\a(s))\si(s,\a(s))+2\z(s),\si(s,\a(s))\ran \\
  &\qq\q+2\lan\eta(s),b(s,\a(s))\ran-\lan\hat R(s,\a(s))v^*(s),v^*(s)\ran\bigg]ds\\
  &\qq\q+\int_t^T\Blan \hat R(s,\a(s))\big[u(s)-\Th^*(s)X(s)-v^*(s)\big],u(s)-\Th^*(s)X(s)-v^*(s)\Bran ds\Bigg\}\\
  &=J(t,x,i;\Th^*(\cd)X^*(\cd)+v^*(\cd))\\
  &\qq+\dbE\int_t^T\Blan \hat R(s,\a(s))\big[u(s)-\Th^*(s)X(s)-v^*(s)\big],u(s)-\Th^*(s)X(s)-v^*(s)\Bran ds.
\end{align*}
For any $v(\cd)\in \cU[t,T]$, let $u(\cd):=\Th^*(\cd)X(\cd)+v(\cd)$ with $X(\cd)$ being the solution to the state equation under the closed-loop strategy $(\Th^*(\cd),v(\cd))$. Then the above implies that
\begin{align*}
J(t,x,i;\Th^*(\cd)X(\cd)+v(\cd))=&J(t,x,i;\Th^*(\cd)X^*(\cd)+v^*(\cd))\\
                                 &+\dbE\int_t^T\lan \hat R(s,\a(s))\big[v(s)-v^*(s)], v(s)-v^*(s)\ran ds.
\end{align*}
Therefore, $(\Th^*(\cd),v^*(\cd))$ is a closed-loop optimal strategy if and only if
\begin{align*}
\dbE\int_t^T\lan \hat R(s,\a(s))\big[v(s)-v^*(s)], v(s)-v^*(s)\ran ds\ge 0, \q\forall v(\cd)\in\cU[t,T],
\end{align*}
or equivalently,
\begin{align*}
\hat R(s,\a(s))\ge 0, \q a.e. s\in[t,T].
\end{align*}
Finally, the representation of the value function follows from the identity
\begin{align*}
\lan \hat R(s,\a(s))v^*(s),v^*(s)\ran=\lan\hat R(s,\a(s))^\dag \hat \rho(s,\a(s)),\hat\rho(s,\a(s))\ran.
\end{align*}
\end{proof}

% \ms

% The following is concerned with the convexity of the cost
% functional, whose proof is straightforward, by making use of the
% representation (\ref{J-rep1}) of the cost functional.

% \ms

% \begin{coro}
%   \label{equivconvex}
%   \sl Let {\rm(H1)--(H2)} hold and let $(t,i)\in[0,T)\times\cS$
%   be given. Then the following are equivalent:

%   \ms

%   {\rm(i)} $u(\cd)\mapsto J(t,x,i;u(\cd))$ is convex, for some
%   $x\in\dbR^n$.

%   \ms

%   {\rm(ii)} $u(\cd)\mapsto J(t,x,i;u(\cd))$ is convex, for any
%   $x\in\dbR^n$.

%   \ms

%   {\rm(iii)} $u(\cd)\mapsto J^0(t,x,i;u(\cd))$ is convex, for some
%   $x\in\dbR^n$.

%   \ms

%   {\rm(iv)} $u(\cd)\mapsto J^0(t,x,i;u(\cd))$ is convex, for any
%   $x\in\dbR^n$.

%   \ms

%   {\rm(v)} $J^0(t,0,i;u(\cd))\ges0$, for all $u(\cd)\in\cU[t,T]$.

%   \ms

%   {\rm(vi)} $M_2(t,i)\ges0$.

% \end{coro}

% \rm

% \ms

\section{Uniform convexity of the cost functional and the strongly regular solution  of the Riccati equation}
% In the following, we first introduce some notations and then present some lemmas which will be used
% frequently in sequel. Let
% \begin{equation}
%   \label{eq:hatsr}
%   \begin{aligned}
%     \hat S(s,i)&:=B(s,i)^\top P(s,i)+ D(s,i)^\top P(s,i)C(s,i)+S(s,i),\\
%     \hat R(s,i)&:=R(s,i)+D(s,i)^\top\1n P(s,i)D(s,i),
%   \end{aligned}
% \end{equation}
We first present some properties for the solution to Lyapunov equation, which play a crucial role on establishing the equivalence between uniform convexity of the cost functional and the strongly regular solution of the Riccati equation.
\begin{lem}
  \label{sec:reprecostclosedloop}
  \rm Let {\rm(H1)--(H2)} hold and $\Th(\cd)\in
  L^2(0,T;\dbR^{m\times n})$ for $i\in \cS$. Let $P(\cd,i)\in C([0,T];\dbS^n), i\in\cS$ be the
  solution to the following Lyapunov equation:
  \begin{equation}
    \label{P-Th}
    \left\{\begin{aligned}
        \dot P(s,i)&+P(s,i)A(s,i)+A(s,i)^\top P(s,i)+C(s,i)^\top P(s,i)C(s,i)\\
        &+\hat S(s,i)^\top\Th(s)+\Th(s)^\top \hat S(s,i)+\Th(s)^\top \hat R(s,i)\Th(s)\\
        &+Q(s,i)+\sum_{k=1}^D\lambda_{ik}(s)P(s,k)=0,\qq\ae~s\in[0,T],\\
        P(T,i)&=G(T,i).
      \end{aligned}\right.
  \end{equation}
  Then for any $(t,x,i)\in[0,T)\times\dbR^n\times\cS$ and $u(\cd,\cd)\in\cU[t,T]$, we have
  \begin{equation*}
    \begin{aligned}
      &J^0(t,x,i;\Th(\cd)X_0^{\Th,u}(\cd\,;t,x,i)+u(\cd))=\langle P(t,i)x,x\rangle+\dbE\int_t^T\Big\{\lan T_\a^1u(s),u(s)\ran+2\lan T_\a^2X_0^{\Th,u}(s;t,x,i),u(s)\ran\Big\}ds.
    \end{aligned}
  \end{equation*}
  where $X_0^{\Th,u}(\cd\,;t,x,i)$ is the solution of \eqref{generalstate} and
  \begin{eqnarray*}
    T_\a^1u(\cd) &:=&\hat R(\cd,\a(\cd))u(\cd)\\
    T_\a^2X_0^{\Th,u}(\cd;t,x,i) &:=&\big[\hat S(\cd,\a(\cd))+\hat R(\cd,\a(\cd))\Th(\cd)\big]X_0^{\Th,u}(\cd\,;t,x,i).
  \end{eqnarray*}
\end{lem}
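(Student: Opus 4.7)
The natural approach is to apply It\^o's formula to $s\mapsto \langle P(s,\a(s))X(s),X(s)\rangle$, where $X(s):=X_0^{\Th,u}(s;t,x,i)$ is the state trajectory under the control $\Th(\cd)X(\cd)+u(\cd)$, and then match the resulting identity against the cost $J^0(t,x,i;\Th X+u)$. Since the terminal condition $P(T,\a(T))=G(T,\a(T))$ reproduces the terminal cost, and the initial value reproduces $\langle P(t,i)x,x\rangle$, the whole content of the lemma reduces to computing the drift of $\langle P(s,\a(s))X(s),X(s)\rangle$ and showing that, modulo the running cost, it collapses to $\langle T_\a^1 u,u\rangle+2\langle T_\a^2 X,u\rangle$.

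First I would compute the differential of $P(s,\a(s))$ treating it as a semimartingale driven by the Markov chain, as was already done in the proof of Proposition \ref{sec:F-K}: the drift part is $\dot P(s,\a(s))+\sum_{k=1}^D\l_{\a(s-)k}(s)[P(s,k)-P(s,\a(s-))]\,ds$, which simplifies to $\dot P(s,\a(s))+\sum_{k=1}^D\l_{\a(s-)k}(s)P(s,k)$ once one uses $\sum_k \l_{ik}=0$, plus a pure-jump martingale against $d\widetilde N_k(s)$. Combining this with the Brownian diffusion $dX=[(A+B\Th)X+Bu]ds+[(C+D\Th)X+Du]dW$, the It\^o product rule produces a drift for $\langle P X,X\rangle$ consisting of (i) $X^\top[\dot P+\sum_k\l_{ik}P(s,k)]X$, (ii) $2X^\top P[(A+B\Th)X+Bu]$, and (iii) the diffusion quadratic variation $((C+D\Th)X+Du)^\top P((C+D\Th)X+Du)$, with the martingale pieces disappearing in expectation.

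Next, I would substitute the Lyapunov equation \eqref{P-Th} into (i) to replace $\dot P+\sum_k\l_{ik}P(s,k)$ by $-PA-A^\top P-C^\top PC-\hat S^\top\Th-\Th^\top\hat S-\Th^\top\hat R\Th-Q$. The $PA+A^\top P$ and $C^\top PC$ contributions then cancel the corresponding terms in (ii) and (iii); using $\hat S=B^\top P+D^\top PC+S$ and $\hat R=R+D^\top PD$ to decompose the remaining cross and quadratic terms in $X$ and $u$, the combined drift reduces to
\[
-X^\top QX-2X^\top S^\top\Th X-X^\top\Th^\top R\Th X+2X^\top(\hat S^\top+\Th^\top\hat R)u+u^\top\hat R u-2X^\top(S^\top+\Th^\top R)u-u^\top Ru.
\]
Adding the running cost integrand $\langle QX,X\rangle+2\langle SX,\Th X+u\rangle+\langle R(\Th X+u),\Th X+u\rangle$ for $J^0$ with control $\Th X+u$, all terms purely quadratic in $X$ and all terms linear in $(X,u)$ without a $P$-dependence cancel exactly, leaving precisely $2\langle(\hat S+\hat R\Th)X,u\rangle+\langle\hat Ru,u\rangle=2\langle T_\a^2 X,u\rangle+\langle T_\a^1 u,u\rangle$. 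Taking expectation, integrating from $t$ to $T$, and using $P(T,\a(T))=G(T,\a(T))$ yields the claim.

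The main obstacle is simply the algebraic bookkeeping: many quadratic forms in $X$ and cross-terms in $(X,u)$ must be grouped so that the structural identities for $\hat S$ and $\hat R$ collapse them to the advertised compact form. There are no delicate integrability issues beyond the standard fact that under (H1)--(H2) and $\Th(\cd)\in L^2$, the state $X_0^{\Th,u}$ lies in $L^2_{\dbF}(\O;C([t,T];\dbR^n))$, so the local-martingale pieces arising from $dW$ and $d\widetilde N_k$ in the It\^o expansion are genuine martingales and drop out upon taking expectation.
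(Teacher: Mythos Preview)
Your proposal is correct and follows essentially the same approach as the paper: apply It\^o's formula to $s\mapsto\langle P(s,\a(s))X(s),X(s)\rangle$ with $X=X_0^{\Th,u}(\cd\,;t,x,i)$, then use the Lyapunov equation \eqref{P-Th} so that the $X$-quadratic part of the drift vanishes. The paper packages this slightly more compactly by defining the full Lyapunov operator as $T_\a^0$ and noting $T_\a^0 X\equiv0$, whereas you carry out the substitution and cancellations explicitly, but the argument is the same.
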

% \bf Lemma 2.3.
% \begin{eqnarray*}
    %     T_\a^1u(s) &:=&\big[R(s, \a(s))+D(s, \a(s))^\top P(s, \a(s))D(s, \a(s))\big]u(s)\\
    %     T_\a^2X^{x,u}(s) &:=&\big[B(s,\a(s))^\top P(s,\a(s))+D(s,\a(s))^\top P(s,\a(s))C(s,\a(s))+S(s,\a(s))\\
    %                      &&\q+(R(s,\a(s))+D(s,\a(s))^\top P(s,\a(s))D(s,\a(s)))\Th(s,\a(s))\big]X^{x,u}(s)
                              %   \end{eqnarray*}
                              %                               \ms
\begin{proof}
  \rm For any $(t,x)\in[0,T)\times\dbR^n$ and $u(\cd)\in\cU[t,T]$,
  let $X_0^{x,u}$ be the solution of \eqref{generalstate} % and
  % %
                              %                               $$\left\{\2n\ba{ll}
                              %   %
                              %                               \ns\ds dX(s)=\big[\big(A(s,\a(s))+B(s,\a(s))\Th(s,\a(s))\big)X(s)+B(s,\a(s))u(s)\big]ds\\
                              %   %
      %       \ns\ds \qq\qq\q+\big[\big(C(s,\a(s))+D(s,\a(s))\Th(s,\a(s))\big)X(s)+D(s,\a(s))u(s)\big]dW(s),\qq s\in[t,T],\\
      %   %
      %       \ns\ds X(t)=x,\ea\right.$$
      % %
  and set
  \begin{equation*}
    \begin{aligned}
      T_\a^0X_0^{\Th,u}(\cd\,;t,x,i):= \bigg[\dot P(\cd,\a(\cd))&+P(\cd,\a(\cd))A(\cd,\a(\cd))+A(\cd,\a(\cd))^\top P(\cd,\a(\cd))+C(\cd,\a(\cd))^\top P(\cd,\a(\cd))C(\cd,\a(\cd))\\
      &+\hat S(\cd,\a(\cd))^\top\Th(\cd)+\Th(\cd)^\top \hat S(\cd,\a(\cd))+\Th(\cd)^\top \hat R(\cd,\a(\cd))\Th(\cd)\\
      &+Q(\cd,\a(\cd))+\sum_{k=1}^D\lambda_{\a(\cd)k}(\cd)P(\cd,k)\bigg] X_0^{\Th,u}(\cd\,;t,x,i)
    \end{aligned}
  \end{equation*}
        %         \begin{eqnarray*}
        %         T_\a^0X_0^{\Th,u}(\cd\,;t,x,i)&:=&        \bigg[\dot P(\cd,\a(\cd))+P(\cd,\a(\cd))\big[A(\cd,\a(\cd))+B(\cd,\a(\cd))\Th(\cd,\a(\cd))\big]\\
    %                      &&\q+\big[A(\cd,\a(\cd))+B(\cd,\a(\cd))\Th(\cd,\a(\cd))\big]^\top P(\cd,\a(\cd))\\
    %                      &&\q+\big[C(\cd,\a(\cd))+D(\cd,\a(\cd))\Th(\cd,\a(\cd))\big]^\top P(\cd,\a(\cd))\big[C(\cd,\a(\cd))+D(\cd,\a(\cd))\Th(\cd,\a(\cd))\big]\\
    %                      &&\q +\Th(\cd,\a(\cd))^\top R(\cd,\a(\cd))\Th(\cd,\a(\cd))+\Th(\cd,\a(\cd))^\top S(\cd,\a(\cd))+S(\cd,\a(\cd))^\top\Th(\cd,\a(\cd))\\
    %                      &&\q+Q(\cd,\a(\cd))+\sum_{k=1}^D\l_{\a(\cd),k}(\cd)P(\cd,k)\bigg]X_0^{x,u,\Th}(\cd).
                              %   \end{eqnarray*}
  Applying It\^o's formula to $s\mapsto\langle P(s,\a(s))X(s),X(s)\rangle$, we have
  \begin{equation*}
    \begin{aligned}
      &J^0(t,x,i;\Th(\cd)X_0^{\Th,u}(\cd\,;t,x,i)+u(\cd))\\
      &=\dbE\Bigg\{\Blan G(T,\a(T))X_0^{\Th,u}(T;t,x,i),X_0^{\Th,u}(T;t,x,i)\Bran+\int_t^T\bigg[\Blan Q(s,\a(s))X_0^{\Th,u}(s;t,x,i), X_0^{\Th,u}(s;t,x,i)\Bran\\
      &\qq\qq+2\Blan\1nS(s,\a(s))X_0^{\Th,u}(s;t,x,i),u(s)\Bran +\Blan R(s,\a(s))u(s),u(s)\Bran\bigg] ds\Bigg\}\\
      &=\langle P(t,i)x,x\rangle+\dbE\int_t^T\Big\{\lan T_\a^0X_0^{\Th,u}(s;t,x,i),X_0^{\Th,u}(s;t,x,i)\ran+\lan T_\a^1u(s),u(s)\ran+2\lan T_\a^2X_0^{\Th,u}(s;t,x,i),u(s)\ran\Big\}ds\\
      &=\langle P(t,i)x,x\rangle+\dbE\int_t^T\Big\{\lan T_\a^1u(s),u(s)\ran+2\lan T_\a^2X_0^{\Th,u}(s;t,x,i),u(s)\ran\Big\}ds.
    \end{aligned}
  \end{equation*}
  This completes the proof.
\end{proof}

  \begin{prop}\rm
   \label{sec:closedlooplyapunov} Let {\rm(H1)--(H2)} and {\rm(\ref{J>l})}
    hold. Then for any $\Th(\cd)\in L^2(0,T;\dbR^{m\times n})$, the
    solution $P(\cd,\cd)\in C([0,T];\dbS^n)$ to the Lyapunov equation (\ref{P-Th})
    % %
    % \begin{eqnarray}
    %   \label{P-Th-l}\left\{\2n\ba{ll}
    %     %
    %   \ns\ds\dot P(s,i)+P(s,i)\big[A(s,i)+B(s,i)\Th(s)\big]+\big[A(s,i)+B(s,i)\Th(s)\big]^\top P(s,i)\\
    %   %
    %   \ns\ds\q+\big[C(s,i)+D(s,i)\Th(s)\big]^\top P(s,i)\big[C(s,i)+D(s,i)\Th(s)\big]+\Th(s)^\top R(s,i)\Th(s)\\
    %   %
    %   \ns\ds\q+S(s,i)^\top\Th(s)+\Th(s)^\top S(s,i)+Q(s,i)+\sum_{k=1}^D\lambda_{ik}(s)P(s,k)=0,\qq\ae~s\in[0,T],\\
    %   %
    %   \ns\ds P(T,i)=G(T,i).\ea\right.\hspace{-1.8cm}
    % \end{eqnarray}
    %  %
    satisfies
    \begin{eqnarray}
      \label{Convex-prop-1} \hat R(t,i)\ges\l I, \q\ae~t\in[0,T],\qq\hb{and}\qq P(t,i)\ges \g I,\q\forall t\in[0,T],
    \end{eqnarray}
    where $\g\in\dbR$ is the constant appears in
    {\rm(\ref{uni-convex-prop0})}.
  \end{prop}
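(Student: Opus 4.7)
To establish Proposition \ref{sec:closedlooplyapunov}, I plan to handle the two bounds separately.

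For $P(t,i) \ges \g I$, I would substitute $u \equiv 0$ into the representation of Lemma \ref{sec:reprecostclosedloop}. Since both cross integrals $\lan T^1_\a u, u\ran$ and $\lan T^2_\a X, u\ran$ then vanish, the identity collapses to $\lan P(t, i) x, x\ran = J^0(t, x, i; \Th(\cd) X_0^{\Th, 0}(\cd\,; t, x, i))$. Since $\Th(\cd) \in L^2(0, T; \dbR^{m\times n})$, the feedback $\Th(\cd) X_0^{\Th, 0}(\cd)$ is an admissible control, so by the definition of the value function,
\[
\lan P(t,i) x, x\ran = J^0(t,x,i;\Th(\cd) X_0^{\Th,0}(\cd)) \ges V^0(t, x, i) \ges \g |x|^2,
\]
where the last inequality is Theorem \ref{sec:valueuniformconvex}. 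Varying $x$ over $\dbR^n$ yields $P(t, i) \ges \g I$.

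For $\hat R(t, i) \ges \l I$, I would combine Lemma \ref{sec:reprecostclosedloop} at $x = 0$ with the uniform convexity bound $J^0(t, 0, i; w) \ges \l\, \dbE\int_t^T |w|^2 ds$ applied at $w = \Th X + u$, where $X := X_0^{\Th, u}(\cd\,; t, 0, i)$. Expanding $|\Th X + u|^2 = |u|^2 + 2\lan \Th X, u\ran + |\Th X|^2$ and rearranging produces
\[
\dbE\int_t^T \lan (\hat R - \l I) u, u\ran ds \ges \l\, \dbE\int_t^T |\Th X|^2 ds - 2\,\dbE\int_t^T \lan (\hat S + (\hat R - \l I)\Th) X, u\ran ds,
\]
with all matrices understood to be evaluated at $(s, \a(s))$. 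To extract a pointwise bound I would fix $i_0 \in \cS$ and $t_0 \in [0, T)$, take $t = t_0$ with initial state $\a(t_0) = i_0$, and insert the localized test control $u(s) = v\, \chi_{[t_0, t_0+h]}(s)$ for arbitrary $v \in \dbR^m$ and small $h > 0$. Standard SDE moment estimates yield $\dbE|X(s)|^2 \les C(s - t_0)|v|^2$ on $[t_0, t_0+h]$ and $\dbE|X(s)|^2 \les Ch |v|^2$ on $[t_0 + h, T]$; consequently the cross integral on the right is $O(h^{3/2})|v|^2$ by Cauchy--Schwarz, while the term $\l\,\dbE\int_t^T |\Th X|^2 ds$ is non-negative for every $h$. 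Dividing through by $h$ and applying Lebesgue differentiation together with the chain estimate $\dbP(\a(s) \neq i_0 \text{ for some } s \in [t_0, t_0+h] \mid \a(t_0) = i_0) = O(h)$, the left side tends to $\lan (\hat R(t_0, i_0) - \l I) v, v\ran$ for a.e.\ $t_0$, while the liminf of the right side is non-negative. Since $v$ is arbitrary, $\hat R(t_0, i_0) \ges \l I$ at a.e.\ $t_0 \in [0, T)$ and every $i_0 \in \cS$.

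The principal technical obstacle will be making the localization sharp enough. One must estimate $X_0^{\Th, u}$ carefully to isolate the ``inside the window'' contribution (which produces the $\hat R$ pointwise limit on the left) from the ``post-window'' propagation (which generates only a non-negative correction on the right), and simultaneously control the Markov chain so that both Lebesgue-type limits hold at the same almost-every $t_0$. After conditioning on $\{\a(t_0) = i_0\}$ and invoking standard $L^2$ estimates for the state equation, this should reduce to classical Lebesgue differentiation for $L^1$ time-integrands.
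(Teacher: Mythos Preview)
Your proposal is correct and follows essentially the same route as the paper: both parts rely on Lemma \ref{sec:reprecostclosedloop}, with the bound on $P(t,i)$ obtained by setting $u\equiv0$ and invoking $V^0(t,x,i)\ges\g|x|^2$, and the bound on $\hat R(t,i)$ obtained by combining the lemma at $x=0$ with \eqref{J>l}, testing against $u={\bf1}_{[t,t+h]}u_0$, and letting $h\to0$. The only cosmetic differences are that the paper writes out the variation-of-constants formula for $X_0^{\Th,u}(\cd\,;t,0,i)$ rather than appealing to SDE moment bounds, and does not spell out the Markov-chain right-continuity argument you mention; these are equivalent implementations of the same localization.
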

\begin{proof}
  Let $\Th(\cd)\in L^2(0,T;\dbR^{m\times n})$ and let
  $P(\cd,\cd)$ be the solution to {\rm(\ref{P-Th})}. % For any
  % $u(\cd)\in\cU[t,T]$, let $X_0^{0,u,\Th}(\cd)$ be the solution of
  % %
  % \begin{align*}
  %   \left\{\2n\ba{l}
  %     %
  %   dX_0^{0,u,\Th}(s)=\big[(A(s,\a(s))+B(s,\a(s))\Th(s)) X_0^{0,u,\Th}(s)+B(s,\a(s))u(s)\big]ds\\
  %   \qq\qq\qq +\big[(C(s,\a(s))+D(s,\a(s))\Th(s))X_0^{0,u,\Th}(s)+D(s,\a(s))u(s)\big]dW(s),\qq s\in[t,T], \\
  %   %
  %    X_0^{0,u,\Th}(t)=0.\ea\right.
  % \end{align*}
  %  %
  By {\rm(\ref{J>l})} and Lemma \ref{sec:reprecostclosedloop}, we have
  $$\ba{ll}
  \ns\ds \l\dbE\int_t^T|\Th(s)X_0^{\Th,u}(s;t,0,i)+u(s)|^2ds\les J^0(t,0,i;\Th(\cd)X_0^{\Th,u}(\cd\,;t,0,i)+u(\cd))\\
  \ns\ds=\dbE\int_t^T\Big\{\lan \hat R(s,\a(s))u(s),u(s)\ran+2\lan [\hat S(s,\a(s))+\hat R(s,\a(s))\Th(s)]X_0^{\Th,u}(s;t,0,i),u(s)\ran\Big\}ds.\ea$$
  Hence, for any $u(\cd)\in\cU[t,T]$, the following holds:
  \begin{align}
    \label{P>LI}\ba{ll}
    \ns\ds\dbE\int_t^T\Big\{2\lan [\hat S(s,\a(s))+\big(\hat R(s,\a(s))-\l I\big)\Th(s)]X_0^{\Th,u}(s;t,0,i),u(s)\ran\\
    \ns\ds\qq\q~+\lan \big(\hat R(s,\a(s))-\l I\big)u(s),u(s)\ran\Big\}ds\ges \l\dbE\int_0^T|\Th(s)X_0^{\Th,u}(s;t,0,i)|^2ds\ges0.\ea
  \end{align}
  Let
  \begin{equation*}
    \F^\Th(\cd\,;t,i):=(X_0^{\Th,0}(\cd\,;t,e_1,i),\cds,X_0^{\Th,0}(\cd\,;t,e_n,i)).
  \end{equation*}
  Then it is easy to verify that $\F^\Th(\cd\,;t,i)$ is the solution to the following SDE for $\dbR^{n\times n}$-valued process:
  \begin{eqnarray}
    \label{FTh}\left\{\2n\ba{ll}
    \ns\ds d\F^\Th(s;t,i)=\big[A(s,\a(s))+B(s,\a(s))\Th(s)\big]\F^\Th(s;t,i)ds\\
    \qq\qq\q+\big[C(s,\a(s))+D(s,\a(s))\Th(s)\big]\F^\Th(s;t,i)dW(s),\qq s\ges0,\\
    \ns\ds\F^\Th(t;t,i)=I,\q \a(t)=i.\ea\right.
  \end{eqnarray}
Thus, $X_0^{\Th,u}(\cd\,;t,0,i)$ can be written as
\begin{align*}
  X_0^{\Th,u}(s;t,0,i)&=\F^\Th(s;t,i)\bigg\{\int_t^s\F^\Th(r;t,i)^{-1}\big[B(r,\a(r))-[C(r,\a(r))+D(r,\a(r))\Th(r)]D(r,\a(r))\big]u(r)dr\\
                      &\qq\qq\qq\q+\int_t^s\F^\Th(r;t,i)^{-1}D(r,\a(r))u(r)dW(r)\bigg\}.
\end{align*}
Now, fix any $u_0\in\dbR^m$, take $u(s)=u_0{\bf 1}_{[t,t+h]}(s)$, with $ 0\les t\les t+h\les T$.  Consequently, (\ref{P>LI}) becomes
\begin{align*}
 \ba{ll}
  \ns\ds\dbE\int_t^{t+h}\Big\{2\lan [\hat S(s,\a(s))+\big(\hat R(s,\a(s))-\l I\big)\Th(s)]\hat \F (s;t,i),u_0\ran+\lan \big(\hat R(s,\a(s))-\l I\big)u_0,u_0\ran\Big\}ds\ges0,\ea
\end{align*}
where
\begin{align*}
  \hat \F(s;t,i)&=\F^\Th(s;t,i)\bigg\{\int_t^s\F^\Th(r;t,i)^{-1}\big[B(r,\a(r))-[C(r,\a(r))+D(r,\a(r))\Th(r)]D(r,\a(r))\big]u_0dr\\
            &\qq\q\qq\qq+\int_t^s\F^\Th(r;t,i)^{-1}D(r,\a(r))u_0dW(r)\bigg\}.
\end{align*}
  Dividing both sides of the above by $h$ and letting $h\to 0$, we obtain
  $$\lan\big(\hat R(t,i)-\l I\big)u_0,u_0\ran\ges 0,\qq\ae~t\in[0,T],\q \forall u_0\in\dbR^m.$$
  The first inequality in (\ref{Convex-prop-1}) follows. To prove the second, for any $(t,x)\in[0,T)\times\dbR^n$ and $u(\cd)\in\cU[t,T]$ and
  % let $X_0^{x,u,\Th}(\cd)$ be the solution of
  % \begin{align*}
  %   \left\{\2n\ba{l}
  %     %
  %   dX_0^{x,u,\Th}(s)=\big[(A(s,\a(s))+B(s,\a(s))\Th(s)) X_0^{x,u,\Th}(s)+B(s,\a(s))u(s)\big]ds\\
  %   \qq\qq\qq +\big[(C(s,\a(s))+D(s,\a(s))\Th(s))X_0^{x,u,\Th}(s)+D(s,\a(s))u(s)\big]dW(s),\qq s\in[t,T], \\
  %   %
  %   X_0^{x,u,\Th}(t)=x.\ea\right.
  % \end{align*}
  %
  by Proposition \ref{sec:valueuniformconvex} and Lemma \ref{sec:reprecostclosedloop}, we have
  $$\ba{ll}
  \ns\ds \g|x|^2\les V^0(t,x,i)\les J^0(t,x,i;\Th(\cd)X_0^{\Th,u}(\cd\,;t,x,i)+u(\cd))\\
  \ns\ds\qq~\1n=\langle P(t,i)x,x\rangle\1n+\dbE\int_t^T\Big\{\lan \hat R(s,\a(s))u(s),u(s)\ran+2\lan [\hat S(s,\a(s))+\hat R(s,\a(s))\Th(s)]X_0^{\Th,u}(s;t,0,i),u(s)\ran\Big\}ds.\ea$$
  In particular, by taking $u(\cd)=0$ in the above, we obtain
  $$\langle P(t,i)x,x\rangle\ges\g|x|^2,\qq\forall (t,x,i)\in[0,T]\times\dbR^n\times \cS,$$
  and the second inequality therefore follows.
\end{proof}

Now we are in the position to prove the equivalence between the uniform convexity of the cost functional and the strongly regular solution of the Riccati equation.
  \begin{thm}
    \label{sec:unifconv-strongregusolu}    \sl Let {\rm(H1)--(H2)} hold. Then the following statements are equivalent:
    \ms

    {\rm(i)} The map $u(\cd)\mapsto J^0(t,0;u(\cd))$ is uniformly convex, i.e., there exists a
    $\l>0$ such that {\rm(\ref{J>l})} holds.

    \ms
    {\rm(ii)} The Riccati equation {\rm(\ref{Riccati})} admits a strongly regular solution $P(\cd,\cd)\in C([0,T]\times \cS;\dbS^n)$.
  \end{thm}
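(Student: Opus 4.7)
The equivalence is established in two directions, both leaning heavily on the Lyapunov equation framework of Lemma \ref{sec:reprecostclosedloop} and Proposition \ref{sec:closedlooplyapunov}.

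\smallskip

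For (ii)$\Ra$(i), suppose $P(\cd,\cd)$ is a strongly regular solution with $\hat R(s,i)\ges\l I$. The plan is to set $\Th^*(s):=-\hat R(s,\a(s))^{-1}\hat S(s,\a(s))\in L^2(0,T;\dbR^{m\times n})$, for which $\hat S+\hat R\Th^*\equiv 0$ so that the Riccati equation for $P$ reduces to the Lyapunov equation (\ref{P-Th}) with $\Th=\Th^*$. For an arbitrary $\tilde u(\cd)\in\cU[t,T]$, decompose $\tilde u=\Th^*X+u$ with $X=X_0^{\tilde u}(\cd;t,0,i)$ and $u:=\tilde u-\Th^*X$. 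Lemma \ref{sec:reprecostclosedloop} then gives
\begin{equation*}
J^0(t,0,i;\tilde u)=\dbE\int_t^T\lan\hat R(s,\a(s))u(s),u(s)\ran ds\ges\l\,\dbE\int_t^T|u(s)|^2ds,
\end{equation*}
since the cross term $T_\a^2=\hat S+\hat R\Th^*$ vanishes identically. Lemma \ref{uniformconvex} applied to $\Th^*$ supplies $\dbE\int_t^T|u|^2ds\ges\g\,\dbE\int_t^T|\tilde u|^2ds$ for some $\g>0$, whence (\ref{J>l}) holds with constant $\l\g$.

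\smallskip

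For (i)$\Ra$(ii), the plan is a Newton--Kleinman iteration combined with a monotonicity argument for the coupled Lyapunov system. Starting from $\Th^{(0)}\equiv 0$, I would solve (\ref{P-Th}) to obtain $P^{(0)}\in C([0,T]\times\cS;\dbS^n)$; form $\hat S^{(n)},\hat R^{(n)}$ from $P^{(n)}$ as in (\ref{eq:hatsr}); define $\Th^{(n+1)}:=-(\hat R^{(n)})^{-1}\hat S^{(n)}$; and iterate. Proposition \ref{sec:closedlooplyapunov} applied at each stage delivers uniform bounds $\hat R^{(n)}(s,i)\ges\l I$ and $P^{(n)}(s,i)\ges\g I$ independent of $n$; in particular $\Th^{(n+1)}\in L^2$ at every step. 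The heart of the argument is that, after subtracting the Lyapunov equations satisfied by $P^{(n)}$ and $P^{(n+1)}$, the difference $\D^{(n)}:=P^{(n)}-P^{(n+1)}$ solves a coupled linear ODE system with terminal condition $\D^{(n)}(T,i)=0$ and non-negative source
\begin{equation*}
F^{(n)}(s,i)=(\Th^{(n+1)}(s)-\Th^{(n)}(s))^\top\hat R^{(n)}(s,i)(\Th^{(n+1)}(s)-\Th^{(n)}(s))\ges 0.
\end{equation*}
A stochastic-flow/Feynman--Kac argument applied to $\F^{(n)}(s;t,i)^\top\D^{(n)}(s,\a(s))\F^{(n)}(s;t,i)$, with $\F^{(n)}$ the fundamental matrix of the closed-loop SDE at step $n$, represents $\D^{(n)}(t,i)$ as the expectation of a non-negative quantity and forces $\D^{(n)}\ges 0$. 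The monotone, uniformly bounded sequence $P^{(n)}$ therefore converges to some $P\in C([0,T]\times\cS;\dbS^n)$; taking limits in (\ref{P-Th}) produces the Riccati equation (\ref{Riccati}), and the uniform lower bound $\hat R^{(n)}\ges\l I$ passes to $\hat R\ges\l I$, yielding strong regularity.

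\smallskip

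The main obstacle, and the very reason to employ stochastic flows rather than the purely analytic completion-of-squares argument of Sun et al.~\cite{Sun2016olcls}, is the regime-switching coupling $\sum_{k=1}^D\l_{ik}(s)\D^{(n)}(s,k)$ appearing in the ODE for $\D^{(n)}$. Because this term mixes values across regimes, one cannot run a comparison state-by-state; monotonicity has to be obtained by treating the vector $(\D^{(n)}(\cd,1),\cds,\D^{(n)}(\cd,D))$ as a single object, which is exactly what the Feynman--Kac representation through $\F^{(n)}$ accomplishes via It\^o's formula (together with the $\wt N_k$ martingale structure of (\ref{eq:N})). A secondary technical point is the uniform $L^2$-bound on $\Th^{(n)}$, which follows from combining the monotonicity $P^{(n)}\les P^{(0)}$ with $P^{(0)}\in C([0,T]\times\cS;\dbS^n)$ to bound $\hat S^{(n)}$ in $L^2$ and the $\l^{-1}$-bound on $(\hat R^{(n)})^{-1}$ from Proposition \ref{sec:closedlooplyapunov}.
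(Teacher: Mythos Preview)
Your plan is correct and mirrors the paper's proof almost exactly: the paper also runs the Newton--Kleinman iteration starting from $\Th_0\equiv0$, invokes Proposition~\ref{sec:closedlooplyapunov} for the uniform bounds $\hat R_n\ges\l I$ and $P_n\ges\g I$, derives the same non-negative source $\L_n^\top\hat R_n\L_n$ for $\D_n=P_n-P_{n+1}$, and appeals to the Feynman--Kac representation (Proposition~\ref{sec:F-K}) to obtain $\D_n\ges0$; for (ii)$\Ra$(i) it likewise completes the square via $\Th=-\hat R^{-1}\hat S$ and finishes with Lemma~\ref{uniformconvex}.

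The one point where your plan is thinner than the paper's execution is the passage from ``monotone and uniformly bounded'' to ``converges in $C([0,T]\times\cS;\dbS^n)$''. Monotonicity and boundedness only give pointwise convergence, which is not enough to pass to the limit in the Lyapunov ODE or to conclude continuity of $P$. The paper closes this gap with a quantitative estimate: it bounds $|\L_n|$ in terms of $|\D_{n-1}|$ via \eqref{Di-equa5}, obtains $\|\D_n(s)\|\les\int_s^T\f(r)\big[\|\D_n(r)\|+\|\D_{n-1}(r)\|\big]dr$ for an integrable $\f$ independent of $n$, and then Gronwall plus induction yield $\|\D_n(s)\|\les a\,c^n(\int_s^T\f)^n/n!$, hence uniform convergence. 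You should plan to include this step.
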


  \begin{proof} \rm (i) $\Ra$ (ii). Let $P_0(\cd,\cd)$ be the solution of
    \begin{align*}
      \left\{\ba{l}
      \dot P_0(s,i)+P_0(s,i)A(s,i)+A(s,i)^\top P_0(s,i)\\
      \qq\q\hspace{0.1cm}+C(s,i)^\top P_0(s,i)C(s,i)+Q(s,i)+\sum_{k=1}^D\l_{ik}(s)P_0(s,k)=0,\qq\ae~s\in[0,T],\\
      P_0(T,i)=G(T,i).\ea\right.
    \end{align*}
    Applying Proposition \ref{sec:closedlooplyapunov} with $\Th(\cd)=0$, we obtain that
    $$\hat R(s,i)\ges\l I,\q P_0(s,i)\ges\g I,\qq\ae~s\in[0,T].$$
    Next, inductively, for $n = 0,1,2, \cdots$, we set
    \begin{eqnarray}
      \label{Iteration-i}
      \left\{\ba{l}
      \Th_n(s,i)=-\hat R(s,i)^{-1}\big[B(s,i)^\top P_n(s,i)+D(s,i)^\top P_n(s,i)C(s,i)+S(s,i)\big],\\
      A_n(s,i)=A(s,i)+B(s,i)\Th_n(s,i),\\
      C_n(s,i)=C(s,i)+D(s,i)\Th_n(s,i),\ea\right.
    \end{eqnarray}
    and let $P_{n+1}$ be the solution of
    \begin{align*}
      \left\{\ba{l}
      \dot P_{n+1}(s,i)+P_{n+1}(s,i)A_n(s,i)+A_n(s,i)^\top P_{n+1}(s,i)\\
      \qq\q\hspace{0.5cm}+C_n(s,i)^\top P_{n+1}(s,i)C_n(s,i)+Q_n(s,i)+\sum_{k=1}^D\l_{ik}(s)P_{n+1}(s,k)=0,\qq\ae~s\in[0,T],\\
      P_{n+1}(T,i)=G(T,i).\ea\right.
    \end{align*}
    By Proposition \ref{sec:closedlooplyapunov}, we see that
    \begin{eqnarray}
      \label{R+Pi-lowerbound}\left\{\ba{l}
      R(s,i)+D(s,i)^\top P_{n+1}(s,i)D(s,i)\ges\l I,\\
      P_{n+1}(s,i)\ges\g I,\q\ae~s\in[0,T],\q n=0,1,2,\cdots.
      \ea\right.
    \end{eqnarray}
    We now claim that $\{P_n(s,i)\}_{n=1}^\i$ converges uniformly in
    $C([0,T];\dbS^n)$. To show this, let
    $$\D_n(s,i)\deq P_n(s,i)-P_{n+1}(s,i),\qq \L_n(s,i)\deq\Th_{n-1}(s,i)-\Th_n(s,i),\qq n\ges1.$$
    Then for $n\ges1$, we have
    \begin{align}
      \label{Di-equa1}
      -\dot \D_n(s,i)=&\dot{P}_{n+1}(s,i)-\dot{P}_n(s,i) \nonumber\\
      =&P_n(s,i)A_{n-1}(s,i)+A_{n-1}(s,i)^\top P_n(s,i)+C_{n-1}(s,i)^\top P_n(s,i)C_{n-1}(s,i)\nonumber\\
                      &+\Th_{n-1}(s,i)^\top R(s,i)\Th_{n-1}(s,i)+S(s,i)^\top\Th_{n-1}(s,i)+\Th_{n-1}(s,i)^\top S(s,i)\nonumber\\
                      &-P_{n+1}(s,i)A_n(s,i)-A_n(s,i)^\top P_{n+1}(s,i)-C_n(s,i)^\top P_{n+1}(s,i)C_n(s,i)\nonumber\\
                      &-\Th_n(s,i)^\top R(s,i)\Th_n(s,i)-S(s,i)^\top\Th_n(s,i)-\Th_n(s,i)^\top S(s,i)+\sum_{k=1}^D\l_{ik}(s)\D_n(s,k)\\
      =&\D_n(s,i)A_n(s,i)+A_n(s,i)^\top\D_n(s,i)+C_n(s,i)^\top\D_n(s,i)C_n(s,i)\nonumber\\
                      &+P_n(s,i)(A_{n-1}(s,i)-A_n(s,i))+(A_{n-1}(s,i)-A_n(s,i))^\top P_n(s,i)\nonumber\\
                      &+C_{n-1}(s,i)^\top P_n(s,i)C_{n-1}(s,i)-C_n(s,i)^\top P_n(s,i)C_n(s,i)\nonumber\\
                      &+\Th_{n-1}(s,i)^\top R(s,i)\Th_{n-1}(s,i)-\Th_n(s,i)^\top R(s,i)\Th_n(s,i)\nonumber\\
                      &+S(s,i)^\top\L_n(s,i)+\L_n(s,i)^\top S(s,i)+\sum_{k=1}^D\l_{ik}(s)\D_n(s,k).\nonumber
    \end{align}
    By (\ref{Iteration-i}), we have the following:
    \begin{align}
      \label{Di-equa2}\left\{\2n\ba{ll}
      \ns\ds A_{n-1}(s,i)-A_n(s,i)=B(s,i)\L_n(s,i),\\
      C_{n-1}(s,i)-C_n(s,i)=D(s,i)\L_n(s,i),\\
      \ns\ds C_{n-1}(s,i)^\top P_n(s,i)C_{n-1}(s,i)-C_n(s,i)^\top P_n(s,i)C_n(s,i)\\
      =\L_n(s,i)^\top D(s,i)^\top P_n(s,i)D(s,i)\L_n(s,i)+C_n(s,i)^\top P_n(s,i)D(s,i)\L_n(s,i)\\
      \q+\L_n(s,i)^\top D(s,i)^\top P_n(s,i)C_n(s,i),\\
      \ns\ds \Th_{n-1}(s,i)^\top R(s,i)\Th_{n-1}(s,i)-\Th_n(s,i)^\top R(s,i)\Th_n(s,i)\\
      =\L_n(s,i)^\top R(s,i)\L_n(s,i)+\L_n(s,i)^\top R(s,i)\Th_n(s,i)+\Th_n(s,i)^\top R(s,i)\L_n(s,i).\ea\right.
    \end{align}
    Note that
    \begin{align*}
      &B(s,i)^\top P_n(s,i)+D(s,i)^\top P_n(s,i)C_n(s,i)+R(s,i)\Th_n(s,i)+S(s,i)\\
      &=B(s,i)^\top P_n(s,i)+D(s,i)^\top P_n(s,i)C(s,i)+S(s,i)+(R(s,i)+D(s,i)^\top P_n(s,i)D(s,i))\Th_n(s,i)=0.
    \end{align*}
    Thus, plugging (\ref{Di-equa2}) into (\ref{Di-equa1}) yields
    \begin{align}
      \label{Di-equa3}\ba{ll}
      \ns\ds&-\,\big[\dot\D_n(s,i)+\Delta_n(s,i)A_n(s,i)+A_n(s,i)^\top\D_n(s,i)+C_n(s,i)^\top\D_n(s,i)C_n(s,i)+\sum_{k=1}^D\l_{ik}(s)\D_n(s,k)\big]\\
      \ns\ds&=P_n(s,i)B(s,i)\L_n(s,i)+\L_n(s,i)^\top B(s,i)^\top P_n(s,i)+\L_n(s,i)^\top D(s,i)^\top P_n(s,i)D(s,i)\L_n(s,i)\\
            &\q+C_n(s,i)^\top P_n(s,i)D(s,i)\L_n(s,i)+\L_n(s,i)^\top D(s,i)^\top P_n(s,i)C_n(s,i)+\L_n(s,i)^\top R(s,i)\L_n(s,i)\\
      \ns\ds&\q+\L_n(s,i)^\top R(s,i)\Th_n(s,i)+\Th_n(s,i)^\top R(s,i)\L_n(s,i)+S(s,i)^\top\L_n(s,i)+\L_n(s,i)^\top S(s,i)\\
      \ns\ds&=\L_n(s,i)^\top\big[R(s,i)+D(s,i)^\top P_n(s,i)D(s,i)\big]\L_n(s,i)\\
            &\q+\big[P_n(s,i)B(s,i)+C_n(s,i)^\top P_n(s,i)D(s,i)+\Th_n(s,i)^\top R(s,i)+S(s,i)^\top\big]\L_n(s,i)\\
            &\q+\L_n(s,i)^\top\big[B(s,i)^\top P_n(s,i)+D(s,i)^\top P_n(s,i)C_n(s,i)+R(s,i)\Th_n(s,i)+S(s,i)\big]\\
      \ns\ds&=\L_n(s,i)^\top\big[R(s,i)+D(s,i)^\top P_n(s,i)D(s,i)\big]\L_n(s,i)\ges0.\ea
    \end{align}
    Noting that $\D_n(T,i)=0$ and using Proposition \ref{sec:F-K}, also noting (\ref{R+Pi-lowerbound}), we obtain
    $$P_1(s,i)\ges P_n(s,i)\ges P_{n+1}(s,i)\ges\a I,\qq\forall s\in [0,T],\q\forall n\ges1.$$
    Therefore, the sequence $\{P_n(s,i)\}_{n=1}^\i$ is uniformly bounded.
    Consequently, there exists a constant $K>0$ such that (noting
    (\ref{R+Pi-lowerbound}))
    \begin{align}
      \label{Di-equa4}\left\{\2n\ba{ll}
      \ns\ds|P_n(s,i)|,\ |R_n(s,i)|\les K,\\
      \ns\ds|\Th_n(s,i)|\les K\big(|B(s,i)|+|C(s,i)|+|S(s,i)|\big),\\
      \ns\ds|A_n(s,i)|\les |A(s,i)|+K|B(s,i)|\big(|B(s,i)|+|C(s,i)|+|S(s,i)|\big),\\
      \ns\ds|C_n(s,i)|\les |C(s,i)|+K\big(|B(s,i)|+|C(s,i)|+|S(s,i)|\big),\ea\right.
      \ae s\in [0,T],\forall i\in\cS, \forall n\ges0,
    \end{align}
    where $R_n(s,i)\deq R(s,i)+D^\top(s,i)P_n(s,i)D(s,i)$. Observe that
    \begin{align}
      \label{Di-equa5}
      \L_n(s,i)=&\Th_{n-1}(s,i)-\Th_n(s,i) \nonumber\\
      =&R_n(s,i)^{-1}D(s,i)^\top\D_{n-1}(s,i)D(s,i)R_{n-1}(s,i)^{-1}\hat S_n(s,i)\\
                &-R_{n-1}(s,i)^{-1}\big[B(s,i)^\top\D_{n-1}(s,i)+D(s,i)^\top\D_{n-1}(s,i)C(s,i)\big]. \nonumber
    \end{align}
    where $\hat S_n(s,i):=B(s,i)^\top P_n(s,i)+D(s,i)^\top P_n(s,i)C(s,i)+S(s,i)$.
    Thus, noting (\ref{Di-equa4}), one has
    \begin{eqnarray}
      \label{3.22}\ba{ll}
      \ns\ds|\L_n(s,i)^\top R_n(s,i)\L_n(s,i)|\les\(|\Th_n(s,i)|+|\Th_{n-1}(s,i)|\)\,|R_n(s,i)|\,
      |\Th_{n-1}(s,i)-\Th_n(s,i)|\\
      \ns\ds\qq\qq\qq\qq\qq\q\2n~\les K\(|B(s,i)|+|C(s,i)|+|S(s,i)|\)^2|\D_{n-1}(s,i)|.\ea
    \end{eqnarray}
    Equation (\ref{Di-equa3}), together with $\D_n(T,i)=0$, implies that
    \begin{align*}
      \D_n(s,i)=\int^T_s\big[&\D_n(r,i)A_n(r,i)+A_n(r,i)^\top\D_n(r,i)+C_n(r,i)^\top\D_n(r,i)C_n(r,i)\\
                             &+\L_n(r,i)^\top R_n(r,i)\L_n(r,i)+\sum_{k=1}^D\l_{ik}(r)\D_n(r,k)\big]dr.
    \end{align*}
    \todo{In order to make use of Gronwall's inequaltiy, I revised the method and please to check if the following process is correct.}
    Making use of (\ref{3.22}) and still noting (\ref{Di-equa4}), we get
    \begin{align}
      \label{eq:D_nineq}
      |\D_n(s,i)|\les \int^T_s\f(r)\[\Big\vert\sum_{k=1}^D\D_n(r,k)\Big\vert+|\D_{n-1}(r,i)|\]dr,\qq\forall s\in[0,T],\q\forall n\ges1,
    \end{align}
    where $\f(\cd)$ is a nonnegative integrable function independent of $\D_n(\cd, \cd)$. Let
    \begin{align*}
      \Vert\D_n(s)\Vert:=\max_{k=1}^D|\D_n(s,k)|.
    \end{align*}
    Thus from (\ref{eq:D_nineq}), we have
    \begin{align}
      \Vert\D_n(s)\Vert\les \int^T_s\f(r)\[\Vert\D_n(r)\Vert+\Vert\D_{n-1}(r)\Vert\]dr,\qq\forall s\in[0,T],\q\forall n\ges1,
    \end{align}
    By Gronwall's inequality,
    $$\Vert\D_n(s)\Vert\les e^{\int_0^T\f(r)dr}\int^T_s\f(r)\Vert\D_{n-1}(r)\Vert dr\equiv c\int^T_s\f(r)\Vert\D_{n-1}(r) .$$
    Set
    $$a\deq\max_{0\les s\les T}\Vert\D_0(s)\Vert.$$
    By induction, we deduce that
    $$||\D_n(s)||\les a{c^n\over n!}\(\int_s^T\f(r)dr\)^n,\qq\forall s\in[0,T],$$
    which implies the uniform convergence of $\{P_n(\cd,\cd)\}_{n=1}^\i$. We denote $P(\cd,\cd)$ the limit of $\{P_n(\cd,\cd)\}_{n=1}^\infty$, then
    (noting (\ref{R+Pi-lowerbound}))
    $$R(s,i)+D(s,i)^\top P(s,i)D(s,i)=\lim_{n\to\i}R(s,i)+D(s,i)^\top P_n(s,i)D(s,i)\ges\eps I,
    \qq\ae~s\in[0,T],$$
    and as $n\to\infty$,
    $$\left\{\2n\ba{ll}
      \ns\ds\Th_n(s,i)\to-\hat R(s,i)\hat S(s,i)\equiv\Th(s)  & \hb{in $L^2$},\\
      \ns\ds A_n(s,i)\to A(s,i)+B(s,i)\Th(s)  & \hb{in $L^1$},\\
      \ns\ds  C_n(s,i)\to C(s,i)+D(s,i)\Th(s)  & \hb{in $L^2$}.\ea\right.$$
    Therefore, $P(\cd,\cd)$ satisfies the following equation:
    $$\left\{\2n\ba{ll}
      \ns\ds\dot P(s,i)+P(s,i)\big[A(s,i)+B(s,i)\Th(s)\big]+\big[A(s,i)+B(s,i)\Th(s)\big]^\top P(s,i)\\
      \ns\ds \q+\big[C(s,i)+D(s,i)\Th(s)\big]^\top P(s,i)\big[C(s,i)+D(s,i)\Th(s)\big]+\Th(s)^\top R(s,i)\Th(s)\\
      \q+S(s,i)^\top\Th(s)+\Th(s)^\top S(s,i)+Q(s,i)+\sum_{k=1}^D\l_{ik}(s)P(s,k)=0,\qq\ae~s\in[0,T],\\
      \ns\ds P(T,i)=G(T,i),\ea\right.$$
    which is equivalent to (\ref{Riccati}).

    \ms

    (ii) $\Ra$ (i). Let $P(\cd,\cd)$ be the strongly regular solution of (\ref{Riccati}).
    Then there exists a $\eps>0$ such that
    \bel{iitoi}\hat R(s,i) \ges \eps  I,\qq\ae~s\in[0,T].\ee
    Set
    $$\Th(s)\deq -\hat R(s,\a(s))\hat S(s,\a(s))\in L^2(0,T;\dbR^{m\times n}).$$
    For any $u(\cd)\in\cU[0,T]$, let $X_0^u(\cd\,;t,0,i)$ be the solution of
    $$\left\{\2n\ba{ll}
      \ns\ds dX_0^u(s;t,0,i)=\big[A(s,\a(s))X_0^u(s;t,0,i)+B(s,\a(s))u(s)\big]ds\\
      \qq\qq\q+\big[C(s,\a(s))X_0^u(s;t,0,i)+D(s,\a(s))u(s)\big]dW(s),\qq s\in[t,T], \\
      \ns\ds\hspace{0.2cm} X_0^{0,u}(t)=0.\ea\right.$$
    Applying It\^o's formula to $s\mapsto\langle P(s,\a(s))X_0^u(s;t,0,i),X_0^u(s;t,0,i)\rangle$, we have
    \begin{align*}
      &J^0(t,0;u(\cd))\\
      =&\dbE\Bigg\{\Blan G(T,\a(T))X_0^u(T;t,0,i),X_0^u(T;t,0,i)\Bran +
                               \int_t^T\bigg[\Blan Q(s,\a(s))X_0^u(s;t,0,i), X_0^u(s;t,0,i)\Bran\\
                             &\qq\q+2\Blan\1nS(s,\a(s))X_0^u(s;t,0,i),u(s)\Bran+\Blan R(s,\a(s))u(s),u(s)\Bran\bigg] ds\Bigg\}\\
      =&\dbE\int_t^T\[\lan\dot P(s,\a(s))X_0^u(s;t,0,i),X_0^u(s;t,0,i)\ran\\
                             &\qq\q+\lan P(s,\a(s))\big[A(s,\a(s))X_0^u(s;t,0,i)+B(s,\a(s))u(s)\big],X_0^u(s;t,0,i)\ran\\
                             &\qq\q+\lan P(s,\a(s))X_0^u(s;t,0,i),A(s,\a(s))X_0^u(s;t,0,i)+B(s,\a(s))u(s)\ran\\
                             &\qq\q+\lan P(s,\a(s))\big[C(s,\a(s))X_0^u(s;t,0,i)+D(s,\a(s))u(s)\big],C(s,\a(s))X_0^u(s;t,0,i)+D(s,\a(s))u(s)\ran\\
                             &\qq\q+\lan Q(s,\a(s))X_0^u(s;t,0,i),X_0^u(s;t,0,i)\ran+2\lan S(s,\a(s))X_0^u(s;t,0,i),u(s)\ran\\
                             &\qq\q+\lan R(s,\a(s))u(s),u(s)\ran+\lan \sum_{k=1}^D\l_{\a(s-),k}(s)P(s,k)X_0^u(s;t,0,i),X_0^u(s;t,0,i)\ran\]ds\\
      =&\dbE\int_t^T\[\lan\h Q(s,\a(s))X_0^u(s;t,0,i),X_0^u(s;t,0,i)\ran+2\lan\h S(s,\a(s))X_0^u(s;t,0,i),u(s)\ran+\lan\h R(s,\a(s))u(s),u(s)\ran\]ds\\
      =&\dbE\int_t^T\big[\lan\Th(s)^\top\h R(s,\a(s))\Th(s) X_0^u(s;t,0,i),X_0^u(s;t,0,i)\ran
      \\
           &\qq\q-2\lan\h R(s,\a(s))\Th(s) X_0^u(s;t,0,i),u(s)\ran+\lan\h R(s,\a(s))u(s),u(s)\ran\big]ds\\
      =&\dbE\int_t^T\lan\big[\hat R(s,\a(s))\big[u(s)-\Th(s) X_0^u(s;t,0,i)\big],u(s)-\Th(s) X_0^u(s;t,0,i)\ran ds.
    \end{align*}
    Noting (\ref{iitoi}) and making use of Lemma \ref{uniformconvex}, we obtain that
    \begin{align*}
      J^0(t,0;u(\cd))=&\dbE\int_t^T\lan \hat R(s,\a(s))\big[u(s)-\Th(s) X_0^u(s;t,0,i)\big],u(s)-\Th(s) X_0^u(s;t,0,i)\ran ds\\
      \ges&\l\g\dbE\int_t^T|u(s)|^2ds,
            \q\forall u(\cd)\in\cU[t,T],
    \end{align*}
    for some $\g>0$. Then (i) holds.
  \end{proof}

  \begin{rmk}
    From the first part of the proof of Theorem 4.6, we see that if (\ref{J>l}) holds,
    then the strongly regular solution of (\ref{Riccati}) satisfies (\ref{strong-regular})
    with the same constant $\l>0$.
  \end{rmk}

  Combining Theorem \ref{sec:closedlooplyapunov} and Theorem \ref{sec:unifconv-strongregusolu}, we obtain the following corollary.

  \begin{coro}
    \label{sec:opt-open-cont-u}
    Let $P(\cd,\cd)$ be the unique strongly regular solution of {\rm(\ref{Riccati})}
    with $(\eta(\cd),\z(\cd))$ being the adapted
    solution of {\rm(\ref{eta-zeta-xi})}. $\hat R(\cd,\cd)$ and $\hat \rho(\cd,\cd)$ are defined by \eqref{eq:hatsr} and \eqref{eq:hatrrho} respectively. Suppose that {\rm(H1)--(H2)} and {\rm(\ref{J>l})}
    hold. Then Problem {\rm(M-SLQ)} is uniquely open-loop solvable at any
    $(t,x,i)\in[0,T)\times\dbR^n\times \cS$ with the open-loop optimal control
    $u^*(\cd)$ being of a state feedback form:
    \begin{align}
      \label{opti-biaoshi} u^*(\cd)=-\hat R(\cd,\a(\cd))^{-1}\hat S(\cd,\a(\cd))X^*(\cd)-\hat R(\cd,\a(\cd))^{-1}\hat\rho(\cd,\a(\cd))
    \end{align}
   where $X^*(\cd)$ is  the solution
    of the following closed-loop system:
    \begin{align}
      \label{eclosed-loop-state}\left\{\2n\ba{ll}
      \ns\ds dX^*(s)=\Big\{\big[A(s,\a(s))-B(s,\a(s))\hat R(s,\a(s))^{-1}\hat S(s,\a(s))\big]X^*(s)\\
      \ns\ds\qq\qq\q~-B(s,\a(s))\hat R(s,\a(s))^{-1}\hat \rho(s,\a(s))+b(s,\a(s))\Big\}ds\\
      \ns\ds\qq\qq~~\1n+\Big\{\big[C(s,\a(s))-D(s,\a(s))\hat R(s,\a(s))^{-1}\hat S(s,\a(s))\big]X^*\\
      \ns\ds\qq\qq\qq~-D(s,\a(s))\hat R(s,\a(s))^{-1}\hat\rho(s,\a(s))+\si(s,\a(s))\Big\}dW(s),\qq s\in[t,T], \\
      \ns\ds X^*(t)=x.\ea\right.
    \end{align}
     %
    % Moreover, the value function is given by
    %%
    % \bel{}\ba{ll}
    %%
    % \ns\ds V(t,x)=\dbE\bigg\{\langle P(t)x,x\rangle+2\langle\eta(t),x\rangle+\int_t^T\[\langle
    %   P\si,\si\rangle+2\langle\eta,b\rangle+2\langle\z,\si\rangle\\
    %%
    %   \ns\ds\qq\qq\q\ ~-\lan(R+D^\top PD)^\dag(B^\top\eta+D^\top\z+D^\top P\si+\rho),
    %   B^\top\eta+D^\top\z+D^\top P\si+\rho\ran\]ds\bigg\}.\ea\    \ms
  \end{coro}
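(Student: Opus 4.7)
The plan is to assemble the corollary by combining the two main theorems of Sections 5 and 6 with Theorem \ref{sec:valueuniformconvex}. First, by Theorem \ref{sec:unifconv-strongregusolu}, the uniform convexity condition \eqref{J>l} is equivalent to the existence of a (unique) strongly regular solution $P(\cdot,\cdot)\in C([0,T]\times\cS;\dbS^n)$ of the Riccati equation \eqref{Riccati}, which satisfies $\hat R(s,i)\ges\l I$ a.e. In particular $\hat R(\cd,\a(\cd))$ is invertible and the pseudo-inverse reduces to the ordinary inverse, so the regularity condition \eqref{eq:regular} is automatic.

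Next, I would check that the BSDE \eqref{eta-zeta-xi} admits a unique adapted solution $(\eta(\cd),\z(\cd),\xi_1(\cd),\cds,\xi_D(\cd))$ and that the vector $\hat\rho(\cd,\a(\cd))$ defined by \eqref{eq:hatrrho} satisfies \eqref{eta-zeta-regularity} trivially, since $\hat R(s,i)\ges\l I$ guarantees $\hat R(s,i)^\dag=\hat R(s,i)^{-1}$ is a bounded linear operator, so automatically $\hat\rho(s,i)\in\cR(\hat R(s,i))$ and $\hat R(\cd,\a(\cd))^{-1}\hat\rho(\cd,\a(\cd))\in L_\dbF^2(0,T;\dbR^m)$ by the standard BSDE estimates together with (H1)--(H2).

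With these ingredients verified, Theorem \ref{sec:closedloop-regusolu} applies and produces a closed-loop optimal strategy $(\Th^*(\cd),v^*(\cd))$ on every $[t,T]$ of the form \eqref{Th-v-rep}. Because $\hat R^\dag\hat R=I$ in the strongly regular case, the ambiguous terms $[I-\hat R^\dag\hat R]\Pi$ and $[I-\hat R^\dag\hat R]\n$ vanish, so
\[
\Th^*(s)=-\hat R(s,\a(s))^{-1}\hat S(s,\a(s)),\qquad v^*(s)=-\hat R(s,\a(s))^{-1}\hat\rho(s,\a(s)).
\]
By the remark after Definition \ref{sec:defnofopen-closeloop}, the outcome $u^*(\cd)=\Th^*(\cd)X^*(\cd)+v^*(\cd)$ is then an open-loop optimal control of Problem (M-SLQ) for the initial pair $(t,x,i)$, and substituting the two expressions above yields exactly the state feedback representation \eqref{opti-biaoshi} with $X^*(\cd)$ satisfying the closed-loop system \eqref{eclosed-loop-state} (obtained by plugging $\Th^*$ and $v^*$ into \eqref{closed-loop0}).

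Finally, uniqueness of the open-loop optimal control follows from Theorem \ref{sec:valueuniformconvex}, which tells us that under uniform convexity Problem (M-SLQ) is uniquely open-loop solvable at every initial triple. The only subtle point — and the step I would be most careful about — is the verification that the constraint \eqref{eta-zeta-regularity} is trivial in the strongly regular case and that the representation of $v^*$ really picks out the \emph{unique} open-loop optimal control (rather than one of many closed-loop minimizers); this is ensured precisely because strong regularity removes the free parameters $\Pi$ and $\n$ in \eqref{Th-v-rep}, pinning down $u^*$ up to the almost sure equality.
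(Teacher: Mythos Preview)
Your proposal is correct and follows essentially the same approach as the paper: invoke Theorem \ref{sec:unifconv-strongregusolu} to obtain the strongly regular solution (so that \eqref{eta-zeta-regularity} holds automatically and $\hat R^\dag=\hat R^{-1}$), then apply Theorem \ref{sec:closedloop-regusolu} together with the remark after Definition \ref{sec:defnofopen-closeloop}. Your version is actually more thorough than the paper's, which is very terse and does not explicitly cite Theorem \ref{sec:valueuniformconvex} for the uniqueness claim.
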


  \begin{proof}
    By Theorem \ref{sec:unifconv-strongregusolu}, the Riccati equation (\ref{Riccati})
    admits a unique strongly regular solution $P(\cd,\cd)\in C([0,T]\times\cS;\dbS^n)$.
    Hence, the adapted solution $(\eta(\cd),\z(\cd))$ of {\rm(\ref{eta-zeta-xi})}
    satisfies (\ref{eta-zeta-regularity}) automatically. Now applying Theorem \ref{sec:closedloop-regusolu}
    and noting the remark right after Definition \ref{sec:defnofopen-closeloop}, we get the desired result.
  \end{proof}
  \begin{rmk}
    Under the assumptions of Corollary \ref{sec:opt-open-cont-u}, when $b(\cd,\cd), \si(\cd,\cd), g(\cd,\cd), q(\cd,\cd), \rho(\cd,\cd)=0$,
    the adapted solution of (\ref{eta-zeta-xi}) is $(\eta(\cd),\z(\cd))\equiv(0,0)$. Thus, for Problem ${\rm(M-SLQ)}^0$,
    the unique optimal control $u^*(\cd)$ at initial pair $(t,x)\in[0,T)\times\dbR^n$ is given by
    \begin{align}
      \label{opti-biaoshi-0} u^*(\cd)=-\hat R(\cd,\a(\cd))^{-1}\hat S(\cd,\a(\cd))X^*(\cd),
    \end{align}
      with $P(\cd,\cd)$ being the unique strongly regular solution of {\rm(\ref{Riccati})}
      and $X^*(\cd)$ being the solution of the following closed-loop system:
      \begin{align}
        \label{closed-loop-state-0}\left\{\2n\ba{ll}
        \ns\ds dX^*(s)=\big[A(s,\a(s))-B(s,\a(s))\hat R(s,\a(s))^{-1}\hat S(s,\a(s))\big]X^*(s)ds\\
        \ns\ds\qq\qq~~\1n+\big[C(s,\a(s))-D(s,\a(s))\hat R(s,\a(s))^{-1}\hat S(s,\a(s))\big]X^*(s)dW(s),\qq s\in[t,T], \\
      \ns\ds X^*(t)=x.\ea\right.
    \end{align}
    Moreover, by (\ref{Value}), the value function of Problem ${\rm(M-SLQ)}^0$ is given by
    \bel{} V^0(t,x,i)=\langle P(t,i)x,x\rangle, \qq (t,x,i)\in[0,T]\times\dbR^n\times \cS.\ee
  \end{rmk}

  % \begin{rmk}
  %   Combing the above results, we have the following relationships:

  %   {\footnotesize
  %     $$\ba{ll}
  %     %
  %     % \ns\ds\qq\qq\qq\qq\qq\q\boxed{G\ges0,\q R\gg0,\q Q-S^\top R^{-1}S\ges0}\\
  %     %
  %     % \ns\ds\qq\qq\qq\qq\qq\qq\q~\Downarrow\qq\qq\qq\qq\qq\Downarrow\\
  %     %
  %     \ns\ds \boxed{\hb{$u(\cd)\mapsto J^0(t,0;u(\cd))$ uniformly
  %         convex}}\Longleftrightarrow\,\boxed{\sc\hb{RE strongly regularly solvable}}
  %     \qq\,\Rightarrow\qq\,\boxed{\sc\hb{RE regularly solvable}}\\
  %     %
  %     \ns\ds\qq\qq\qq\qq\,\q\qq\qq\qq\qq\qq\qq\qq\qq\qq\q\,\Downarrow\qq\qq\qq\qq\qq\qq\qq\qq\q~\Updownarrow\\
  %     %
  %     \ns\ds~\,\boxed{\hb{(M-SLQ) uniquely open-loop
  %         solvable}}\q\Leftarrow\q\boxed{\hb{(M-SLQ) uniquely closed-loop
  %         solvable}}\q\Rightarrow\q\boxed{\hb{$\rm(M-SLQ)^0$ closed-loop
  %         solvable}}\ea$$}
  %   %

  % \end{rmk}

  % \small

 %  \bibliography{/Users/Alsichkann/Sync/Dropbox/Mybib}
 %  \bibliographystyle{plainnat}

% \bibliography{/Users/Shared/Sync/Dropbox/Mybib}
 % \bibliographystyle{/Users/Shared/Sync/Dropbox/plainnat-doi}

  %  \bibliography{/Users/Alsichkann/Sync/Dropbox/Doi}

\begin{thebibliography}{99}
    \providecommand{\natexlab}[1]{#1}
    \providecommand{\url}[1]{\texttt{#1}}
    \expandafter\ifx\csname urlstyle\endcsname\relax
    \providecommand{\doi}[1]{doi: #1}\else
    \providecommand{\doi}{doi: \begingroup \urlstyle{rm}\Url}\fi

%  \bibitem[Anderson and Moore(1990)]{anderson1990}
%    Brian D.~O. Anderson and John~B. Moore.
%    \newblock \href{https://dl.acm.org/citation.cfm?id=79089}{\emph{Optimal
%        Control: Linear Quadratic Methods}}.
%    \newblock Prentice-Hall, Inc., Upper Saddle River, NJ, USA, 1990.
%
%  \bibitem[Bellman et~al.(1958)Bellman, Glicksberg, and Gross]{bellman1958samtcp}
%    Richard~Ernest Bellman, Irving~Leonard Glicksberg, and Oliver~Alfred Gross.
%    \newblock \href{https://www.rand.org/pubs/reports/R313.html}{\emph{Some aspects
%        of the mathematical theory of control processes}}.
%    \newblock Santa Monica, CA: RAND Corporation, 1958.

  \bibitem{Bismut1976}
    J.M. Bismut.
    \newblock %\href{http://dx.doi.org/10.1137/0314028}
    {Linear quadratic optimal stochastic control with random coefficients}.
    \newblock \emph{{SIAM} Journal on Control and Optimization}, 14\penalty0 (3):
    \penalty0 419--444, 1976.

  \bibitem{Chen2001}
    S.P. Chen and J.M. Yong.
    \newblock %\href{http://dx.doi.org/10.1007/s002450010016}
    {Stochastic linear quadratic optimal control problems}.
    \newblock \emph{Applied Mathematics and Optimization}, 43\penalty0 (1):
    \penalty0 21--45, 2001.

%  \bibitem[Chen and Zhou(2000)]{Chen2000}
%    Shuping Chen and Xunyu Zhou.
%    \newblock \href{http://dx.doi.org/10.1137/s0363012998346578}{Stochastic linear
%      quadratic regulators with indefinite control weight costs. {II}}.
%    \newblock \emph{{SIAM} Journal on Control and Optimization}, 39\penalty0 (4) :
%    \penalty0 1065--1081, 2000.

  \bibitem{Chen1998}
    S.P. Chen, X.J. Li, and X.Y. Zhou.
    \newblock %\href{http://dx.doi.org/10.1137/s0363012996310478}
    {Stochastic linear quadratic regulators with indefinite control weight costs}.
    \newblock \emph{{SIAM} Journal on Control and Optimization}, 36\penalty0 (5):
    \penalty0 1685--1702, 1998.

%  \bibitem[Elliott et~al.(1995)Elliott, Aggoun, and Moore]{elliott1994hmm}
%    Robert~J Elliott, Lakhdar Aggoun, and John~B Moore.
%    \newblock \href{http://dx.doi.org/10.1007/978-0-387-84854-9}{\emph{{Hidden
%          Markov Models: Estimation and Control}}}.
%    \newblock Stochastic Modelling and Applied Probability. New York:
%    Springer-Verlag, 1995.

  \bibitem{Hu2008}
    Y.Z. Hu and Bernt Oksendal.
    \newblock %\href{http://dx.doi.org/10.1137/060667566}
    {Partial information linear quadratic control for jump diffusions}.
    \newblock \emph{{SIAM} Journal on Control and Optimization}, 47\penalty0 (4):
    \penalty0 1744--1761, 2008.

  \bibitem{Ji1990csctm}
    Y.~Ji and H.J. Chizeck.
    \newblock %\href{http://dx.doi.org/10.1109/9.57016}
    {Controllability, stabilizability, and continuous-time Markovian jump linear quadratic
      control}.
    \newblock \emph{{IEEE} Transactions on Automatic Control}, 35\penalty0 (7):
    \penalty0 777--788, 1990.

  \bibitem{Ji1992jlqgc}
    Y.~Ji and H.J. Chizeck.
    \newblock %\href{http://dx.doi.org/10.1109/9.182475}
    {Jump linear quadratic Gaussian control in continuous time}.
    \newblock \emph{{IEEE} Transactions on Automatic Control}, 37\penalty0 (12):
    \penalty0 1884--1892, 1992.

%  \bibitem[Kalman(1960)]{kalman1960cto}
%    Rudolf~Emil Kalman.
%    \newblock
%    \href{https://pdfs.semanticscholar.org/4602/a97c4965a9f6c41c9a7eeaef5be8333dbaef.pdf}{Contributions
%      to the theory of optimal control}.
%    \newblock \emph{Bolet\'{i}n de la Sociedad Matem\'{a}tica Mexicana}, 5\penalty0
%    (2) : \penalty0 102--119, 1960.

  \bibitem{Kohlmann2001ndbsre}
    M. Kohlmann and S.J. Tang.
    \newblock %\href{http://dx.doi.org/10.1007/978-3-0348-8291-0_19}
    {New developments in backward stochastic riccati equations and their
      applications}.
    \newblock In Michael Kohlmann and Shanjian Tang, editors, \emph{Mathematical
      Finance}, pages 194--214. Birkh\"{a}user, Basel, 2001.

  \bibitem{Kohlmann2002}
    M. Kohlmann and S.J. Tang.
    \newblock %\href{http://dx.doi.org/10.1016/s0304-4149(01)00133-8}
    {Global adapted solution of one-dimensional backward stochastic Riccati equations, with
      application to the mean{\textendash}variance hedging}.
    \newblock \emph{Stochastic Processes and their Applications}, 97\penalty0 (2):
    \penalty0 255--288, 2002.

  \bibitem{Kohlmann2003mbsr}
    M. Kohlmann and S.J. Tang.
    \newblock %\href{http://dx.doi.org/10.1137/s0363012900378760}
    {Multidimensional backward stochastic Riccati equations and applications}.
    \newblock \emph{{SIAM} Journal on Control and Optimization}, 41\penalty0 (6):
    \penalty0 1696--1721, 2003{\natexlab{a}}.

  \bibitem{Kohlmann2003mrlq}
    M. Kohlmann and S.J. Tang.
    \newblock %\href{http://dx.doi.org/10.1137/s0363012900372465}
    {Minimization of risk and linear quadratic optimal control theory}.
    \newblock \emph{{SIAM} Journal on Control and Optimization}, 42\penalty0 (3):
    \penalty0 1118--1142, 2003{\natexlab{b}}.

  \bibitem{Kushner1962}
    H. Kushner.
    \newblock %\href{http://dx.doi.org/10.1109/tac.1962.1105490}
    {Optimal stochastic control}.
    \newblock \emph{{IRE} Transactions on Automatic Control}, 7\penalty0 (5):
    \penalty0 120--122, 1962.

  \bibitem{Li2018}
    N. Li, Z. Wu, and Z.Y. Yu.
    \newblock %\href{http://dx.doi.org/10.1007/s11425-015-0776-6}
    {Indefinite stochastic linear-quadratic optimal control problems with random jumps and
      related stochastic Riccati equations}.
    \newblock \emph{Science China Mathematics}, 61\penalty0 (3): \penalty0
    563--576, 2018.

  \bibitem{lizhou2002islq}
    X. Li and X.Y. Zhou.
    \newblock %\href{http://dx.doi.org/10.4310/cis.2002.v2.n3.a4}
    {Indefinite stochastic LQ controls with Markovian jumps in a finite time horizon}.
    \newblock \emph{Communications in Information and Systems}, 2\penalty0 (3):
    \penalty0 265--282, 2002.

  \bibitem{xunli2001islqj}
    X. Li, X.Y. Zhou, and M. Ait Rami.
    \newblock %\href{http://dx.doi.org/10.1109/cdc.2001.981145}
    {Indefinite stochastic LQ control with jumps}.
    \newblock In \emph{Proceedings of the 40th {IEEE} Conference on Decision and
      Control (Cat. No.01CH37228)}. {IEEE}, 2001.

  \bibitem{xunli2003islq}
    X. Li, X.Y. Zhou, and Mustapha~Ait Rami.
    \newblock %\href{http://dx.doi.org/10.1023/a:1024887007165}
    {Indefinite stochastic linear quadratic control with Markovian jumps in infinite time horizon}.
    \newblock \emph{Journal of Global Optimization}, 27\penalty0 (2): \penalty0
    149--175, 2003.

%  \bibitem[Lim and Zhou(1999)]{Lim1999}
%    Andrew E.~B. Lim and Xunyu Zhou.
%    \newblock \href{http://dx.doi.org/10.1109/9.774108}{Stochastic optimal {LQR}
%      control with integral quadratic constraints and indefinite control weights}.
%    \newblock \emph{{IEEE} Transactions on Automatic Control}, 44\penalty0 (7) :
%    \penalty0 1359--1369, 1999.

  \bibitem{Liu2005nocrs}
    Y.J. Liu, G. Yin, and X.Y. Zhou.
    \newblock %\href{http://dx.doi.org/10.1016/j.automatica.2005.01.002}
    {Near-optimal controls of random-switching LQ problems with indefinite control weight costs}.
    \newblock \emph{Automatica}, 41\penalty0 (6): \penalty0 1063--1070, 2005.

  \bibitem{mei2017equilibrium}
    H.W. Mei and J.M. Yong.
    \newblock %\href{http://arxiv.org/abs/arXiv:1712.09505}
    {Equilibrium strategies for time-inconsistent stochastic switching systems}.
    \newblock \penalty0 arXiv:1712.09505 , 2017.

%  \bibitem[Meng(2014)]{Meng2014glqo}
%    Qingxin Meng.
%    \newblock \href{http://dx.doi.org/10.1080/07362994.2013.845106}{General linear
%      quadratic optimal stochastic control problem driven by a brownian motion and
%      a poisson random martingale measure with random coefficients}.
%    \newblock \emph{Stochastic Analysis and Applications}, 32\penalty0 (1) :
%    \penalty0 88--109, 2014.

%  \bibitem[Peng(2011)]{Peng2011bsde}
%    Shige Peng.
%    \newblock \href{http://dx.doi.org/10.1142/9789814324359_0019}{Backward
%      stochastic differential equation, nonlinear expectation and their
%      applications}.
%    \newblock In \emph{Proceedings of the International Congress of Mathematicians
%      2010 ({ICM} 2010)}. Published by Hindustan Book Agency ({HBA}), India. {WSPC}
%    Distribute for All Markets Except in India, 2011.

  \bibitem{penrose1955generalized}
    R. Penrose.
    \newblock %\href{http://dx.doi.org/https://doi.org/10.1017/S0305004100030401}
    {A generalized inverse for matrices}.
    \newblock \emph{Mathematical Proceedings of the Cambridge Philosophical
      Society}, 51\penalty0 (3): \penalty0 406--413, 1955.

%  \bibitem[Rami et~al.(2002)Rami, Moore, and Zhou]{RamiMooreZhou2002islq}
%    M.~Ait Rami, J.~B. Moore, and Xunyu Zhou.
%    \newblock \href{http://dx.doi.org/10.1137/s0363012900371083}{Indefinite
%      stochastic linear quadratic control and generalized differential riccati
%      equation}.
%    \newblock \emph{{SIAM} Journal on Control and Optimization}, 40\penalty0 (4) :
%    \penalty0 1296--1311, 2002.

%  \bibitem[Rami and Zhou(2000)]{Rami2000}
%    Mustapha~Ait Rami and Xunyu Zhou.
%    \newblock \href{http://dx.doi.org/10.1109/9.863597}{Linear matrix inequalities,
%      riccati equations, and indefinite stochastic linear quadratic controls}.
%    \newblock \emph{{IEEE} Transactions on Automatic Control}, 45\penalty0 (6) :
%    \penalty0 1131--1143, 2000.

  \bibitem{sun2014linear}
    J.R. Sun and J.M. Yong.
    \newblock %\href{http://dx.doi.org/10.1137/140953642}
    {Linear quadratic stochastic differential games: open-loop and closed-loop saddle points}.
    \newblock \emph{SIAM Journal on Control and Optimization}, 52\penalty0 (6):
    \penalty0 4082--4121, 2014.

  \bibitem{Sun2016olcls}
    J.R. Sun, X. Li, and J.M. Yong.
    \newblock %\href{http://dx.doi.org/10.1137/15m103532x}
    {Open-loop and closed-loop solvabilities for stochastic linear quadratic optimal control problems}.
    \newblock \emph{{SIAM} Journal on Control and Optimization}, 54\penalty0 (5):
    \penalty0 2274--2308, 2016.

  \bibitem{Tang2003}
    S.J. Tang.
    \newblock %\href{http://dx.doi.org/10.1137/s0363012901387550}
    {General linear quadratic optimal stochastic control problems with random coefficients:
      Linear stochastic hamilton systems and backward stochastic riccati
      equations}.
    \newblock \emph{{SIAM} Journal on Control and Optimization}, 42\penalty0 (1) :
    \penalty0 53--75, 2003.

  \bibitem{Tang2015}
    S.J. Tang.
    \newblock %\href{http://dx.doi.org/10.1137/140979940}
    {Dynamic programming for general linear quadratic optimal stochastic control with random coefficients}.
    \newblock \emph{{SIAM} Journal on Control and Optimization}, 53\penalty0 (2):
    \penalty0 1082--1106, 2015.

  \bibitem{Wonham1968}
    W.M. Wonham.
    \newblock %\href{http://dx.doi.org/10.1137/0306044}
    {On a matrix Riccati equation of stochastic control}.
    \newblock \emph{{SIAM} Journal on Control}, 6\penalty0 (4): \penalty0
    681--697, 1968.

  \bibitem{wu2003fbsde}
    Z. Wu and X.R. Wang.
    \newblock %\href{http://aas.net.cn/CN/article/downloadArticleFile.do?attachType=PDF&id=16169}
    {FBSDE with Poisson process and its application to linear quadratic stochastic optimal control problem with random jumps}.
    \newblock \emph{Acta Automatica Sinica}, 29\penalty0 (6): \penalty0 821--826,
    2003.

  \bibitem{Yin2004mmvps}
    G. Yin and X.Y. Zhou.
    \newblock %\href{http://dx.doi.org/10.1109/tac.2004.824479}
    {Markowitz's mean-variance portfolio selection with regime switching: From discrete-time models to their continuous-time limits}.
    \newblock \emph{IEEE Transactions on Automatic Control}, 49\penalty0 (3):
    \penalty0 349--360, 2004.

  \bibitem{yong1999sch}
    J.M. Yong and X.Y. Zhou.
    \newblock %\href{https://books.google.com.hk/books?id=CdHuD7E-7XIC}
    {\emph{Stochastic Controls: Hamiltonian Systems and HJB Equations}}.
    \newblock %Stochastic Modelling and Applied Probability.
    Springer New York, 1999.

  \bibitem{Yu2017ihjd}
    Z.Y. Yu.
    \newblock %\href{http://dx.doi.org/10.1051/cocv/2016055}
    {Infinite horizon jump-diffusion forward-backward stochastic differential equations and their
      application to backward linear-quadratic problems}.
    \newblock \emph{{ESAIM}: Control, Optimisation and Calculus of Variations},
    23\penalty0 (4): \penalty0 1331--1359, 2017.

  \bibitem{Zalinescu1983ucf}
    C. Zalinescu.
    \newblock %\href{http://dx.doi.org/10.1016/0022-247x(83)90112-9}
    {On uniformly convex functions}.
    \newblock \emph{Journal of Mathematical Analysis and Applications}, 95\penalty0
    (2): \penalty0 344--374, 1983.

  \bibitem{Zalinescu2002cagvs}
    C. Zalinescu.
    \newblock %\href{http://dx.doi.org/10.1142/5021}
    {\emph{Convex Analysis in General Vector Spaces}}.
    \newblock World Scientific, 2002.

  \bibitem{QingZhang1999noch}
    Q. Zhang and G. Yin.
    \newblock %\href{http://dx.doi.org/10.1109/9.811209}
    {On nearly optimal controls of hybrid LQG problems}.
    \newblock \emph{IEEE Transactions on Automatic Control}, 44\penalty0 (12):
    \penalty0 2271--2282, 1999.

  \bibitem{Zhang2010psem}
    X. Zhang, T.K. Siu and Q.B. Meng.
    \newblock %\href{http://dx.doi.org/10.1137/080736351}
    {Portfolio selection in the enlarged Markovian regime-switching market}.
    \newblock \emph{{SIAM} Journal on Control and Optimization}, 48\penalty0 (5):
    \penalty0 3368--3388, 2010.

  \bibitem{Zhang2011mrsm}
    X. Zhang, R.J. Elliott, T.K. Siu and J.Y. Guo.
    \newblock %\href{http://dx.doi.org/10.1093/imaman/dpr018}
    {Markovian regime-switching market completion using additional markov jump assets}.
    \newblock \emph{{IMA} Journal of Management Mathematics}, 23\penalty0 (3):
    \penalty0 283--305, 2011.

  \bibitem{Zhang2012smp}
    X. Zhang, R.J. Elliott, and T.K. Siu.
    \newblock %\href{http://dx.doi.org/10.1137/110839357}
    {A stochastic maximum
      principle for a Markov regime-switching jump-diffusion model and its
      application to finance}.
    \newblock \emph{{SIAM} Journal on Control and Optimization}, 50\penalty0 (2):
    \penalty0 964--990, 2012.

  \bibitem{Zhang2018gsmp}
    X. Zhang, Z. Sun, and J. Xiong.
    \newblock %\href{http://dx.doi.org/10.1137/17m112395x}
    {A general stochastic
      maximum principle for a Markov regime switching jump-diffusion model of
      mean-field type}.
    \newblock \emph{{SIAM} Journal on Control and Optimization}, 56\penalty0 (4):
    \penalty0 2563--2592, 2018.

  \bibitem{Zhou2003mmvp}
    X.Y. Zhou and G. Yin.
    \newblock %\href{http://dx.doi.org/10.1137/s0363012902405583}
    {Markowitz's mean-variance portfolio selection with regime switching: A continuous-time model}.
    \newblock \emph{{SIAM} Journal on Control and Optimization}, 42\penalty0 (4):
    \penalty0 1466--1482, 2003.

  \end{thebibliography}
 % \bibliographystyle{/Users/Alsichkann/Sync/Dropbox/plainnat-doi}

\end{document}